\makeatletter \@addtoreset{equation}{section} \makeatother
\newtheorem{theorem}{Theorem}[section]
\newtheorem{proposition}{Proposition}[section]
\newtheorem{lemma}{Lemma}[section]
\numberwithin{equation}{section}
\begin{document}

\title{Normalized ground states for a fractional Choquard system in $\mathbb{R}$}
\author
{Wenjing Chen\footnote{Corresponding author.}\ \footnote{E-mail address:\, {\tt wjchen@swu.edu.cn} (W. Chen),   {\tt zxwangmath@163.com} (Z. Wang)}\  \ and    Zexi Wang\\
\footnotesize  School of Mathematics and Statistics, Southwest University,
Chongqing, 400715, P.R. China}
\date{ }
\maketitle

\begin{abstract}
{In this paper, we study the following fractional Choquard system 
\begin{align*}
  \begin{split}
  \left\{
  \begin{array}{ll}
   (-\Delta)^{1/2}u=\lambda_1 u+(I_\mu*F(u,v))F_u (u,v),
    \quad\mbox{in}\ \ \mathbb{R},  \\
    (-\Delta)^{1/2}v=\lambda_2 v+(I_\mu*F(u,v)) F_v(u,v),
    \quad\mbox{in}\ \ \mathbb{R},  \\
    \displaystyle\int_{\mathbb{R}}|u|^2\mathrm{d}x=a^2,\quad \displaystyle\int_{\mathbb{R}}|v|^2\mathrm{d}x=b^2,\quad u,v\in H^{1/2}(\mathbb{R}),\\
    \end{array}
    \right.
  \end{split}
  \end{align*}
where  $(-\Delta)^{1/2}$ denotes the $1/2$-Laplacian operator, $a,b>0$ are prescribed, $\lambda_1,\lambda_2\in \mathbb{R}$, $I_\mu(x)=\frac{{1}}{{|x|^\mu}}$ with $\mu\in(0,1)$, $F_u,F_v$ are partial derivatives of $F$ and $F_u,F_v$ have exponential critical growth in $\mathbb{R}$. By using a minimax principle and analyzing the monotonicity of the ground state energy with respect to the prescribed masses, we obtain at least one normalized ground state solution for the above system.
}

\smallskip
\emph{\bf Keywords:} Normalized solutions; Fractional Choquard system; Exponential critical growth.
\end{abstract}

\section{{\bfseries Introduction}}\label{introduction}
This paper deals with the following system
\begin{align}\label{problem}
  \begin{split}
  \left\{
  \begin{array}{ll}
   (-\Delta)^{1/2}u=\lambda_1 u+(I_\mu*F(u,v))F_u (u,v),
    \quad\mbox{in}\ \ \mathbb{R},  \\
    (-\Delta)^{1/2}v=\lambda_2 v+(I_\mu*F(u,v)) F_v(u,v),
    \quad\mbox{in}\ \ \mathbb{R},
    \end{array}
    \right.
  \end{split}
  \end{align}
  with the prescribed masses
  \begin{equation}\label{problem'}
     \displaystyle\int_{\mathbb{R}}|u|^2\mathrm{d}x=a^2,\quad \displaystyle\int_{\mathbb{R}}|v|^2\mathrm{d}x=b^2,\quad u,v\in H^{1/2}(\mathbb{R}),
  \end{equation}
where  $(-\Delta)^{1/2}$ denotes the 1/2-Laplacian operator, $a,b>0$ are prescribed, $\lambda_1,\lambda_2\in \mathbb{R}$, $I_\mu(x)=\frac{{1}}{{|x|^\mu}}$ with $\mu\in(0,1)$, $F_u,F_v$ are partial derivatives of $F$ and $F_u,F_v$ have exponential subcritical in the sense of the Trudinger-Moser inequality, see \cite{lula,ozawa,Taka}.

The interest in studying problem \eqref{problem}-\eqref{problem'} originates from the study of the following nonlocal elliptic equation
\begin{align}\label{1.2}
(-\Delta)^su=\lambda u +(I_\mu*F(u))f(u), \quad \text{in $\mathbb{R}^N$},
\end{align}
where $s\in(0,1)$, $I_\mu=\frac{1}{|x|^\mu}$ with $\mu\in (0,N)$, $F(u)$ is the primitive function of $f(u)$, and $(-\Delta )^s$ is the fractional Laplacian operator defined by
\begin{align*}
(-\Delta )^su(x):=C({N,s})\ \mbox{P.V.}\int_{\mathbb{R}^N}\frac{u(x)-u(y)}{|x-y|^{N+2s}}\mathrm{d}y, \quad \text {in $\mathbb{R}^N$},
\end{align*}
for $u\in C_0^\infty(\mathbb{R}^N)$,
where P.V. means the Cauchy principal value and $C({N,s})$ is some positive normalization constant, we refer to \cite{di} for more details. The nonlocal equation \eqref{1.2} arises in many interesting physical situations in quantum theory and plays an important role in describing the finite-range many-body interactions. Equation \eqref{1.2} is well known in the literature as the Choquard equation and was first introduced by Penrose in \cite{P} to investigate the self-gravitational collapse of a
quantum mechanical wave function.

To get solutions of $(\ref{1.2})$, one way is to fix $\lambda\in \mathbb{R}$ and look for solutions of $(\ref{1.2})$
as critical points of the energy functional $\hat J:H^{s}(\mathbb{R}^{N})\rightarrow \mathbb{R}$ (see e.g. \cite{cle,dSS,SGY,ZW})
\begin{align*}
\hat J(u)=\frac{1}{2} \int_{\mathbb{R}^N}(|(-\Delta)^{s/2}u|^2-\lambda |u|^2
)\mathrm{d}x-\frac{1}{2} \int_{\mathbb{R}^N}(I_\mu*F(u))F(u)\mathrm{d}x
\end{align*}
with
\begin{equation*}
	\int_{\mathbb{R}^N}|(-\Delta)^{s/2}u|^2\mathrm{d}x=\int_{\mathbb{R}^N}\int_{\mathbb{R}^N}\frac{|u(x)-u(y)|^2}{|x-y|^{N+2s}}\,\mathrm{d}x\mathrm{d}y,
\end{equation*}
where $H^{s}(\mathbb{R}^{N})$ is a Hilbert space with the inner product and norm respectively
$$
\langle u,v\rangle=\int_{\mathbb{R}^{N}}(-\Delta)^{s/2}u(-\Delta)^{s/2}v\mathrm{d}x+\int_{\mathbb{R}^{N}} uv\mathrm{d}x,
$$
$$
\|u\|=\Big(\int_{\mathbb{R}^{N}}|(-\Delta)^{s/2}u|^{2}\mathrm{d}x+\int_{\mathbb{R}^{N}}|u|^{2}\mathrm{d}x\Big)^{1/2}.
$$

Another interesting way is to prescribe the $L^2$-norm of the unknown $u$, and $\lambda\in \mathbb{R}$ appears as a Lagrange multiplier, that is to consider the following problem
\begin{equation}\label{problem1}
	 \begin{cases}
   (-\Delta)^su=\lambda u +(I_\mu*F(u))f(u),\ \  \mbox{in}\ \mathbb{R}^N,\\
   \displaystyle\int_{\mathbb{R}^N}|u|^2 \mathrm{d}x=a^2,\quad u\in H^{s}(\mathbb{R}^N),
   \end{cases}
\end{equation}
for any fixed $a>0$. This type of solutions is called normalized solution, and can be obtained by
 looking for critical points of the following energy functional
\begin{align*}
\bar J(u)=\frac{1}{2} \int_{\mathbb{R}^N} |(-\Delta)^{s/2}u|^2
\mathrm{d}x-\frac{1}{2} \int_{\mathbb{R}^N}(I_\mu*F(u))F(u)\mathrm{d}x
\end{align*}
on the $L^2$-sphere $$\bar S(a):=\Big\{u\in H^s(\mathbb{R}^N):\int_{\mathbb{R}^N}|u|^2 \mathrm{d}x=a^2\Big\}.$$
In particular, we are interested in looking for ground state solutions,  i.e., solutions minimizing $\bar J$ on $\bar S(a)$ among all nontrivial solutions, and the associated energy is called ground state energy.

In recent years, there are many works dedicated to study \eqref{problem1}, see \cite{alvesji,BM,chenzou1,soave1,jeanjean,jeanjeanlu,Li1,soave,soave3,wei} for $s=1$ and $\mu=0$, \cite{bartsch,dengyu,Li2,liye,yao,yuan} for $s=1$ and $\mu\in (0,N)$,  \cite{LZ,LZ1,ZH,ZZ} for $s\in (0,1)$ and $\mu=0$, \cite{CSW,HRZ,LHXY,liluo,liluoyang,Yang} for $s\in (0,1)$ and $\mu\in (0,N)$. In particular, Jeanjean \cite{jeanjean} first showed that a normalized ground state solution does exist for the following equation when $f$ is $L^2$-supercritical growth
\begin{align}\label{jj}
  \begin{split}
  \left\{
  \begin{array}{ll}
   -\Delta u=\lambda u+f(u),
    \quad\mbox{in}\ \ \mathbb{R}^N,  \\
    \displaystyle\int_{\mathbb{R}^N}|u|^2dx=a^2,\quad u\in H^1(\mathbb{R}^N).
    \end{array}
    \right.
  \end{split}
  \end{align}
By using a minimax principle based on the homotopy stable family,
Bartsch and Soave \cite{soave1} presented a new approach that is based on a natural constraint and proved the existence of normalized solutions also in this case. Inspired by \cite{jeanjean,soave1}, Soave \cite{soave} studied \eqref{jj} with combined nonlinearities $f(u)=\omega|u|^{q-2}u+|u|^{p-2}u$, $\omega\in \mathbb{R}$, $2<q\leq 2+\frac{4}{N}\leq p<2^*$ and $q<p$, where $2^*=\infty$ if $N\leq2$ and $2^*=\frac{2N}{N-2}$ if $N\geq3$. The Sobolev critical case $p=2^*$ and $N\geq3$ was considered by Soave \cite{soave3}. In the case $N=2$ and $f$ has exponential critical growth, the existence of normalized solutions of \eqref{jj} has been discussed by Alves et al. \cite{alvesji}.
Besides, Deng and Yu \cite{dengyu}, Chen et al. \cite{CSW} studied \eqref{problem1} with $f$ having exponential critical growth when $s=1$ and $s=\frac{1}{2}$, respectively.

Considering the following system with the mass constraints
\begin{align}\label{Bose}
  \begin{split}
  \left\{
  \begin{array}{ll}
   (-\Delta)^s u=\lambda_1 u+(I_\mu*F(u,v))F_u(u,v),\quad \text{in $\mathbb{R}^N$},\\
   (-\Delta)^s v=\lambda_2 v+(I_\mu*F(u,v))F_v(u,v),\quad \text{in $\mathbb{R}^N$},\\
    \displaystyle\int_{\mathbb{R}^N}|u|^2dx=a^2,\quad \displaystyle\int_{\mathbb{R}^N}|v|^2dx=b^2,\quad u,v\in H^s(\mathbb{R}^N).\\
    \end{array}
    \right.
  \end{split}
  \end{align}
This system has an important physical significance in nonlinear optics and Bose-Einstein condensation. The most famous case is that of coupled Gross-Pitaevskii
equations in dimension $N\leq3$ with $F_u(u,v)=\mu_1|u|^{p-2}u+ r_1\tau|v|^{r_2}|u|^{r_1-2}u $, $F_v(u,v)=\mu_2|u|^{q-2}u+ r_2\tau|u|^{r_1}|v|^{r_2-2} v$, $s=1$, $\mu=0$, $p=q=4$, $r_1=r_2=2$, and $\mu_1,\mu_2,\tau>0$, which models
Bose-Einstein condensation.
The particular case in $\mathbb{R}^3$ was investigated in the companion paper \cite{bartschjean}, and has been further developed by many scholars, we refer the readers to  \cite{soave1,BS2,zhongzou,bartschjean1,chenzou2,BLZ,GJ,LiZou,LWYZ,DY,ZZR,chenwang2} and references therein. It is worth pointing out that in \cite{DY}, Deng and Yu first considered normalized solutions of \eqref{Bose} with general nonlinear terms involving exponential critical growth when $N=2$, $s=1$ and $\mu=0$.  
The authors in \cite{chenwang2} also studied \eqref{Bose} with exponential critical nonlinearities when $N=4$, $s=2$ and $\mu\in(0,4)$. In addition, by using the Adams function \cite{LY3}, they gave a more natural growth condition to estimate the upper bound of the ground state energy.

The study of normalized solutions for \eqref{Bose} is a hot topic in nonlinear PDEs nowadays. However, as far as we know, there are only a few papers dealing with such problems with general nonlinearities besides the ones already mentioned above \cite{chenwang2,DY}. Based on these facts, in this work, we focus on the existence of normalized ground state solutions of problem $(\ref{problem})$-\eqref{problem'}.

More precisely, we assume that $F$ satisfies:

$(F_1)$   For $j=1,2$, $F_{z_j}(z)\in C(\mathbb{R}\times \mathbb{R},\mathbb{R})$, and $\lim\limits_{|z|\to0}\frac{|F_{z_j}(z)|}{|z|^\kappa}=0$ for some $\kappa>2-\mu$;

$(F_2)$  $F_{z_j}(z)$ ($j=1,2$) has exponential critical growth at infinity, i.e.,
\begin{align*}
\lim\limits_{|z|\to+\infty}\frac{|F_{z_j}(z)|}{e^{\alpha |z|^2}}=
\begin{cases}
0,\quad  &\mbox{for any $\alpha>\pi$,}\\
+\infty,\quad &\mbox{for any $0<\alpha <\pi$;}
\end{cases}
\end{align*}

$(F_3)$ There exists a constant $\theta>3-\mu$ such that $0<\theta F(z)\leq z\cdot \nabla F(z)$ for all $z\neq (0,0)$;

$(F_4)$ For any $z\in \mathbb{R}\backslash \{0\}\times \mathbb{R}\backslash \{0\}$, $0< F_{z_j}(z)z_j <(2-\mu)F(z)$, $j=1,2$;

$(F_5)$ $F_{z_1}(0,z_2)\neq0$ for all $z_2\in \mathbb{R}\backslash\{0\}$ and $F_{z_2}(z_1,0)\neq0$ for all $z_1\in \mathbb{R}\backslash\{0\}$;

$(F_6)$ For any $z\in{\mathbb{R} \backslash \{0\}}\times {\mathbb{R} \backslash \{0\}}$, define $\widetilde F(z):=z\cdot \nabla F(z)-(2-\mu)F(z)$, then $\nabla \widetilde F(z)$ exists
and
\begin{equation*}
  (3-\mu)F(\hat{z})\widetilde F(\tilde{z})<F(\hat{z})\tilde{z}\cdot \nabla\widetilde F(\tilde{z})+\widetilde F(\hat{z})(\widetilde F(\tilde{z})-F(\tilde{z})),\ \ \text{for any $\hat{z}, \tilde{z}\in {\mathbb{R} \backslash \{0\}}\times {\mathbb{R} \backslash \{0\}}$};
\end{equation*}

$(F_7)$ There exists $\beta_0>0$ such that $\liminf\limits_{|z_1|,|z_2|\to+\infty}\frac{F(z)[z\cdot \nabla F(z)]}{e^{2\pi |z|^2}}\geq {\beta_0}$.

Our main result can be stated as follows:

\begin{theorem}\label{thm1.1}
Assume that $F$ satisfies $(F_1)$-$(F_7)$, then problem $(\ref{problem})$-\eqref{problem'} has at least one ground state solution.
\end{theorem}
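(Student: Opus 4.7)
The plan is to formulate problem (\ref{problem})--(\ref{problem'}) variationally as a constrained minimax for the energy
\begin{align*}
J(u,v)=\tfrac{1}{2}\int_{\mathbb{R}}|(-\Delta)^{1/4}u|^2\,\mathrm{d}x+\tfrac{1}{2}\int_{\mathbb{R}}|(-\Delta)^{1/4}v|^2\,\mathrm{d}x-\tfrac{1}{2}\int_{\mathbb{R}}(I_\mu*F(u,v))F(u,v)\,\mathrm{d}x
\end{align*}
on the product sphere $S(a)\times S(b)\subset H^{1/2}(\mathbb{R})\times H^{1/2}(\mathbb{R})$, and to locate a ground state at the mountain-pass level over a Pohozaev-type manifold. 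The $L^2$-preserving scaling $(u,v)_t(x):=(t^{1/2}u(tx),t^{1/2}v(tx))$ rescales the kinetic energy as $t$ and, by $(F_3)$ with $\theta>3-\mu$, makes the Choquard term dominate as $t^{\mu-2+\theta}$ with exponent strictly greater than $1$; hence the fibering map $\Phi_{u,v}(t):=J((u,v)_t)$ has a unique positive maximum, is positive near $0$, and tends to $-\infty$ at infinity. Combined with the fractional Hardy--Littlewood--Sobolev and Trudinger--Moser inequalities (via $(F_1),(F_2)$) this produces a mountain-pass geometry on $S(a)\times S(b)$.

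I would then introduce the Pohozaev-type manifold $\mathcal{P}(a,b):=\{(u,v)\in S(a)\times S(b):P(u,v)=0\}$, where $P(u,v):=\Phi_{u,v}'(1)$, and set $m(a,b):=\inf_{\mathcal{P}(a,b)}J$. Conditions $(F_3),(F_4)$ and especially the technical assumption $(F_6)$ are tailored so that $\Phi_{u,v}$ is strictly concave at its maximum and that $\mathcal{P}(a,b)$ is a smooth natural constraint: every constrained critical point of $J$ on $\mathcal{P}(a,b)\cap(S(a)\times S(b))$ is an unconstrained critical point of $J|_{S(a)\times S(b)}$. Jeanjean's auxiliary-functional trick on $H^{1/2}(\mathbb{R})\times H^{1/2}(\mathbb{R})\times\mathbb{R}$ (equivalently, Ghoussoub's minimax principle on the homotopy-stable family of paths joining $(u,v)_{t_0}$ to $(u,v)_{t_1}$) then yields a Palais--Smale sequence $\{(u_n,v_n)\}\subset S(a)\times S(b)$ for $J$ at level $m(a,b)$ that additionally satisfies $P(u_n,v_n)\to 0$; using $(F_3)$ and the limiting Pohozaev identity, this sequence is uniformly bounded in $H^{1/2}(\mathbb{R})\times H^{1/2}(\mathbb{R})$.

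The decisive and most delicate step will be the critical upper estimate $m(a,b)<c^*$, where $c^*$ is the compactness threshold dictated by the fractional Hardy--Littlewood--Sobolev--Trudinger--Moser inequality for $H^{1/2}(\mathbb{R})$ (of order $\pi(1-\mu)$ up to normalization constants built from $\mu$). I would prove this by inserting into $\Phi_{u,v}$ an Adachi--Tanaka-type Moser sequence $\{\omega_n\}$ concentrated at the origin (a truncation of the fractional fundamental solution), paired with a companion component normalized to mass $b$, and balancing the kinetic-energy estimates against the lower bound on $(I_\mu*F)F$ furnished by $(F_7)$. The role of $(F_7)$ is precisely to quantify the exponential blow-up of $F\,\nabla F$ so as to push the maximum of $\Phi$ strictly below $c^*$; getting the exponents to match is the hard part.

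Finally, since $\{(u_n,v_n)\}$ is bounded and $m(a,b)<c^*$, a fractional Lions-type concentration-compactness argument (after at most one translation) produces a nontrivial weak limit $(u_0,v_0)$, and the Lagrange multipliers $\lambda_1,\lambda_2$ extracted from $P(u_n,v_n)\to 0$ together with the constrained Euler--Lagrange equations are negative by $(F_3)$. The nonlocal term passes to the weak limit through uniform $L^r$ bounds from HLS combined with a Brezis--Lieb-type splitting. To upgrade weak to strong convergence and identify $(u_0,v_0)$ as a ground state, I would use the strict monotonicity of $(a,b)\mapsto m(a,b)$ (obtained from the fibering scaling and $(F_3)$) to exclude mass leakage, and invoke $(F_5)$ together with a direct comparison to the associated scalar fractional Choquard problem to rule out the semi-trivial limits $u_0\equiv 0$ or $v_0\equiv 0$. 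This delivers the desired normalized ground-state solution.
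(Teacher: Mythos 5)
Your outline reproduces the paper's strategy essentially step for step: the scaling $\mathcal{H}((u,v),\beta)$, the Poho\u{z}aev manifold $\mathcal{P}(a,b)$ with $m(a,b)=\inf_{\mathcal{P}(a,b)}\mathcal{J}$, uniqueness and nondegeneracy of the fibering maximum via $(F_6)$, Ghoussoub's minimax on a homotopy stable family to produce a bounded Palais--Smale sequence on $\mathcal{P}(a,b)$, the Moser-type sequence combined with $(F_7)$ to push $m(a,b)$ below the compactness threshold, Lions' vanishing plus translations for a nontrivial weak limit, negative Lagrange multipliers, monotonicity of $m$ to exclude mass leakage, and $(F_5)$ to exclude semitrivial limits.

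The one step where your argument, as stated, would fail is the claim that \emph{strict} monotonicity of $(a,b)\mapsto m(a,b)$ is ``obtained from the fibering scaling and $(F_3)$''. Scaling arguments of that type only give the non-increasing property (the paper's Lemma 5.1: the extra mass is carried by a far-away bump whose kinetic energy is sent to zero by $\mathcal{H}(\cdot,h)$ with $h\to-\infty$, which never yields a strict inequality). Strict decrease is genuinely stronger, and in the paper it is conditional: Lemma 5.2 proves $m(a^*,b)<m(a,b)$ for $a^*>a$ close to $a$ only under the hypothesis that a ground state with $\lambda_1,\lambda_2<0$ exists, by differentiating $t\mapsto\mathcal{J}(\mathcal{H}((tu,v),\beta))$ at $t=1$ and using $\lambda_1a^2<0$. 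In the final step you must therefore close the loop at the limit masses: the weak limit $(u_0,v_0)$ is a solution with $\lambda_1,\lambda_2<0$, $P(u_0,v_0)=0$, and Fatou plus the non-increasing property give $\mathcal{J}(u_0,v_0)=m(a_1,b_1)=m(a,b)$ with $a_1=\|u_0\|_2\le a$, $b_1=\|v_0\|_2\le b$; applying the conditional strict monotonicity \emph{at} $(a_1,b_1)$ then forces $a_1=a$, $b_1=b$. Without this detour (or some other source of strictness, such as strict subadditivity), mass leakage is not excluded. Two smaller inaccuracies: the threshold produced by the Moser sequence and $(F_7)$ is $\frac{2-\mu}{4}$, chosen so that along the Palais--Smale sequence $\|(-\Delta)^{1/4}u_n\|_2^2+\|(-\Delta)^{1/4}v_n\|_2^2<\frac{2-\mu}{2}$, which is exactly what the Adachi--Tanaka inequality combined with Hardy--Littlewood--Sobolev requires (not a quantity of order $\pi(1-\mu)$); and the multipliers come from the Lagrange rule for the constrained Palais--Smale condition, their negativity following from $(F_3)$--$(F_4)$, Fatou, and the nontriviality of both components, rather than from $P(u_n,v_n)\to0$.
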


This paper is organized as follows. Section \ref{sec preliminaries} contains some preliminaries. In Section \ref{vf}, we give the variational framework of problem \eqref{problem}-\eqref{problem'}.  Section \ref{minimax} is devoted to estimate the upper bound of the ground state energy. The monotonicity of the ground state energy with respect to the masses are studied in Section \ref{mono}. In Section \ref{ps}, we use the minimax principle to construct a bounded $(PS)$ sequence. Finally, in Section \ref{proof}, we give the proof of Theorem \ref{thm1.1}.

Throughout this paper, we will use the notation $\|\cdot\|_q:=\|\cdot\|_{L^q(\mathbb{R}^N)}$, $q\in [1,\infty]$, $B_r(x):=\{y\in \mathbb{R}:|y-x|<r\}$ is the open ball of radius $r$ around $x$, $C,C_i,i\in \mathbb{N}^+$ denote positive constants possibly different from line to line.

\section{{\bfseries Preliminaries}}\label{sec preliminaries}

In this section, we give some preliminaries.
\begin{proposition}\label{hardy}
\cite[Theorem 4.3]{Lieb} Let $1<r,t<\infty$ and $0<\mu<N$ with $\frac{1}{r}+\frac{1}{t}+\frac{\mu}{N}=2$. If $f\in L^r(\mathbb{R}^N)$ and $h\in L^t(\mathbb{R}^N)$, then there exists a sharp constant $C(N,\mu,r,t)>0$ such that
\begin{align}\label{HLS}
\int_{\mathbb{R}^N}\int_{\mathbb{R}^N}\frac{f(x)h(y)}{|x-y|^\mu}\mathrm{d}x\mathrm{d}y\leq C(N,\mu,r,t)\|f\|_r\|h\|_t.
\end{align}
\end{proposition}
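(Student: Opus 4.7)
The plan is to prove Proposition~\ref{hardy} by reducing the bilinear estimate to an $L^p$-mapping property of the Riesz-type convolution operator $T h(x) := \int_{\mathbb{R}^N} \frac{h(y)}{|x-y|^\mu}\,\mathrm{d}y$. Observing that the double integral equals $\int_{\mathbb{R}^N} f(x)\,Th(x)\,\mathrm{d}x$, Hölder's inequality gives the bound $\|f\|_r \|Th\|_{r'}$ with $\frac{1}{r'}=1-\frac{1}{r}$. The hypothesis $\frac{1}{r}+\frac{1}{t}+\frac{\mu}{N}=2$ rewrites as $\frac{1}{r'}=\frac{1}{t}-\frac{N-\mu}{N}$, which is precisely the scaling condition that must hold for the inequality $\|Th\|_{r'}\leq C\|h\|_t$. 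Hence the whole problem is reduced to establishing the continuity $T:L^t(\mathbb{R}^N)\to L^{r'}(\mathbb{R}^N)$.

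For the mapping property I would argue by a truncation-plus-interpolation scheme. Fix $\lambda>0$ and $R>0$ and split the kernel as $|x|^{-\mu}=K_R(x)+K^R(x)$ with $K_R=|x|^{-\mu}\chi_{B_R(0)}$ and $K^R=|x|^{-\mu}\chi_{\mathbb{R}^N\setminus B_R(0)}$. A direct computation gives $\|K_R\|_1\lesssim R^{N-\mu}$, and $K^R$ lies in $L^{t'}$ whenever $t'\mu>N$, with norm comparable to $R^{N/t'-\mu}$; the range of admissible $t$ dictated by the scaling relation ensures both conditions hold. Young's inequality yields $\|K^R*h\|_\infty\lesssim R^{N/t'-\mu}\|h\|_t$, so choosing $R=R(\lambda,\|h\|_t)$ to make this quantity equal $\lambda/2$ confines $\{|Th|>\lambda\}$ to $\{|K_R*h|>\lambda/2\}$. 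Chebyshev's inequality together with $\|K_R*h\|_t\leq\|K_R\|_1\|h\|_t$ then produces a weak-type $(t,r')$ bound $|\{|Th|>\lambda\}|\lesssim(\|h\|_t/\lambda)^{r'}$.

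Applying this argument for two distinct exponents $t_0<t<t_1$ inside the open admissible range and invoking the Marcinkiewicz interpolation theorem upgrades these weak-type endpoints to the strong-type inequality $\|Th\|_{r'}\leq C(N,\mu,r,t)\|h\|_t$. Combining this with the initial Hölder step proves \eqref{HLS} with constant depending only on $N,\mu,r,t$, as asserted.

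The main obstacle is the bookkeeping that verifies the two endpoint exponents $t_0,t_1$ can be chosen within the admissible range $(1,\infty)$ compatible with the scaling condition, and that the pair $(t,r')$ is an interior point of the interpolation segment so that Marcinkiewicz applies without loss of constants blowing up. Note that producing the \emph{sharp} constant $C(N,\mu,r,t)$, as is done in Lieb's original proof \cite{Lieb}, requires substantially more machinery (symmetric decreasing rearrangement, Riesz's rearrangement inequality, and a conformal invariance argument in the diagonal case $r=t=\frac{2N}{2N-\mu}$); however, since only existence of a finite constant is needed for Proposition~\ref{hardy}, the truncation/interpolation route above is sufficient.
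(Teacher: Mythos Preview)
The paper does not supply its own proof of this proposition; it is stated in Section~\ref{sec preliminaries} as a known preliminary result with a direct citation to \cite[Theorem~4.3]{Lieb}, and no argument is given. Your sketch via kernel truncation, weak-type estimates, and Marcinkiewicz interpolation is a correct and standard route to the existence of \emph{some} finite constant in \eqref{HLS}, which is in fact all the paper ever uses (the sharp value of $C(N,\mu,r,t)$ plays no role in the subsequent analysis). You are also right that identifying the sharp constant is a separate and deeper matter handled in \cite{Lieb} by rearrangement and conformal methods; since the proposition as invoked here only requires finiteness, your argument is entirely adequate for the paper's purposes.
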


\begin{lemma}\label{GN}
(The fractional Gagliardo-Nirenberg-Sobolev inequality) \cite{frank} Let $u\in H^s(\mathbb{R}^N)$ and $p\in [2,\frac{2N}{N-2s})$, then there exists a sharp constant $C(N,s,p)>0$ such that
\begin{align}\label{gns}
\int_{\mathbb{R}^N}|u|^p\mathrm{d}x\leq C(N,s,p)\Big( \int_{\mathbb{R}^N} |(-\Delta)^{\frac{s}{2}}u|^2\mathrm{d}x
\Big)^{\frac{N(p-2)}{4s}}\Big(
\int_{\mathbb{R}^N} |u|^2\mathrm{d}x
\Big)^{\frac{p}{2}-\frac{N(p-2)}{4s}}.
\end{align}
\end{lemma}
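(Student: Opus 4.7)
The plan is to derive \eqref{gns} by combining the fractional Sobolev embedding with Hölder interpolation. Assume first $N>2s$, so that $2^{*}_{s}:=\frac{2N}{N-2s}<\infty$. The fractional Sobolev embedding $H^{s}(\mathbb{R}^{N})\hookrightarrow L^{2^{*}_{s}}(\mathbb{R}^{N})$ holds with a constant $S(N,s)>0$, i.e.\ $\|u\|_{2^{*}_{s}}\leq S(N,s)\|(-\Delta)^{s/2}u\|_{2}$. One way to prove it is to use the Fourier representation $\|(-\Delta)^{s/2}u\|_{2}^{2}=\int_{\mathbb{R}^{N}}|\xi|^{2s}|\hat{u}(\xi)|^{2}\mathrm{d}\xi$ and apply the Hardy--Littlewood--Sobolev inequality from Proposition \ref{hardy} to the Riesz representation of $u$ in terms of $(-\Delta)^{s/2}u$.

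For $p\in(2,2^{*}_{s})$, solving the interpolation identity $\frac{1}{p}=\frac{1-\theta}{2}+\frac{\theta}{2^{*}_{s}}$ gives $\theta=\frac{N(p-2)}{2ps}$, and Hölder's inequality yields
\[
\|u\|_{p}\leq \|u\|_{2}^{1-\theta}\|u\|_{2^{*}_{s}}^{\theta}\leq S(N,s)^{\theta}\|u\|_{2}^{1-\theta}\|(-\Delta)^{s/2}u\|_{2}^{\theta}.
\]
Raising to the $p$-th power, one verifies $p\theta=\frac{N(p-2)}{2s}$ and $p(1-\theta)=p-\frac{N(p-2)}{2s}$, which match the exponents $\frac{N(p-2)}{4s}$ on $\|(-\Delta)^{s/2}u\|_{2}^{2}$ and $\frac{p}{2}-\frac{N(p-2)}{4s}$ on $\|u\|_{2}^{2}$ appearing in \eqref{gns}. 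The endpoint $p=2$ is trivial; the borderline case $N=2s$ relevant to the present paper ($N=1$, $s=1/2$, $p\in[2,\infty)$) is covered instead by the embedding $H^{1/2}(\mathbb{R})\hookrightarrow L^{q}(\mathbb{R})$ for every $q\in[2,\infty)$, available from the Trudinger--Moser type inequality cited in the introduction.

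Existence of \emph{some} admissible constant $C$ already proves \eqref{gns}; defining $C(N,s,p)$ as the infimum over all admissible constants then yields the ``sharp'' constant automatically, and attainment is not needed for the statement as worded. The deeper attainment result of Frank would proceed by (i) symmetric decreasing rearrangement together with the fractional Pólya--Szegő inequality (established via the Caffarelli--Silvestre extension or directly in Fourier variables) to restrict to radial decreasing minimizers, and (ii) compactness of the radial subcritical embedding $H^{s}_{\mathrm{rad}}(\mathbb{R}^{N})\hookrightarrow L^{p}(\mathbb{R}^{N})$ to extract a strongly convergent subsequence in $L^{p}$. The hardest step would be (i), since the nonlocality of $(-\Delta)^{s/2}$ rules out pointwise ``level-set'' reasoning and so monotonicity of the Gagliardo seminorm under rearrangement is genuinely nontrivial. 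For Lemma \ref{GN} as stated, however, only the interpolation argument above is required.
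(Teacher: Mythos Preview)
The paper does not prove Lemma~\ref{GN}; it is merely stated with a citation to \cite{frank}, so there is no ``paper's own proof'' to compare against. Your interpolation argument (Sobolev embedding plus H\"older) is the standard derivation when $N>2s$, and the exponent bookkeeping is correct.

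One small gap: for the borderline case $N=2s$ (which is exactly the case $N=1$, $s=1/2$ used later in the paper, e.g.\ in Lemma~\ref{ous11}), invoking the embedding $H^{1/2}(\mathbb{R})\hookrightarrow L^{q}(\mathbb{R})$ alone only yields $\|u\|_{p}\leq C\|u\|_{H^{1/2}}$, not the scale-homogeneous form \eqref{gns}. You need one more step: apply the embedding to the rescaled function $u_{\lambda}(x)=u(\lambda x)$ and optimise over $\lambda>0$ to recover the correct exponents $\frac{N(p-2)}{4s}=\frac{p-2}{2}$ and $\frac{p}{2}-\frac{N(p-2)}{4s}=1$. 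This is routine, but since the paper relies precisely on this case, it should be made explicit. Your remark that sharpness of the constant follows by taking the infimum is fine for the statement as worded.
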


\begin{lemma}\label{tm}
(Full range Adachi-Tanaka-type on $H^{1/2}(\mathbb{R})$) \cite[Theorem 1]{Taka} It holds that
\begin{align}\label{att}
  \sup_{u\in H^{1/2}(\mathbb{R})\backslash \{0\},\|(-\Delta)^{1/4}u\|_2 \leq1}\frac{1}{\|u\|_2^2}\int_{\mathbb{R}}(e^{\alpha|u|^2}-1)\mathrm{d}x
  \begin{cases}
  <\infty, \quad \alpha<\pi,\\
  =\infty, \quad \alpha\geq\pi.
  \end{cases}
 \end{align}
\end{lemma}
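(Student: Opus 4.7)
\textbf{Proposed strategy for Lemma \ref{tm}.} The inequality is a full-range Adachi--Tanaka type Trudinger--Moser bound in the fractional setting $H^{1/2}(\mathbb{R})$. I would prove it in two parts, corresponding to the two regimes $\alpha<\pi$ (finiteness) and $\alpha\ge\pi$ (blow-up of the supremum), by adapting the classical $\mathbb{R}^2$-strategy of Adachi--Tanaka to the fractional setting via rearrangement plus a reduction to a bounded-domain Trudinger--Moser inequality.

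For the finiteness case, my first move is to replace $u$ by its symmetric-decreasing rearrangement $u^*$: by the Frank--Seiringer rearrangement inequality for fractional Gagliardo seminorms, $\|(-\Delta)^{1/4}u^*\|_2\le \|(-\Delta)^{1/4}u\|_2\le 1$, while $\|u^*\|_q=\|u\|_q$ for every $q$ and the target integrand $e^{\alpha|u|^2}-1$ is rearrangement-invariant. So it suffices to consider $u$ nonnegative, even, and radially decreasing. I would then split the integral according to the level set decomposition $\{u\le 1\}\cup\{u>1\}$. On $\{u\le 1\}$ the elementary pointwise bound $e^{\alpha t^2}-1\le (e^\alpha-1)t^2$ for $t\in[0,1]$ gives
\begin{equation*}
\int_{\{u\le 1\}}(e^{\alpha u^2}-1)\,dx\le (e^\alpha-1)\|u\|_2^2.
\end{equation*}
On the super-level set $\Omega:=\{u>1\}$, Chebyshev's inequality yields $|\Omega|\le \|u\|_2^2$, and since $u$ is symmetric decreasing $\Omega$ is an interval $(-R,R)$ with $2R\le \|u\|_2^2$. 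On this bounded interval I would invoke the bounded-domain fractional Trudinger--Moser inequality (of Iula--Maalaoui--Martinazzi type) for $H^{1/2}$ with sharp exponent $\pi$, giving $\int_{\Omega}(e^{\alpha u^2}-1)\,dx\le C_\alpha|\Omega|\le C_\alpha\|u\|_2^2$ for every $\alpha<\pi$. Adding the two contributions and dividing by $\|u\|_2^2$ produces the uniform bound claimed.

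For the sharpness case, I would construct a Moser-type sequence with logarithmic profile,
\begin{equation*}
w_n(x)=\frac{1}{\sqrt{\pi}}\begin{cases}\sqrt{\log n},&|x|\le 1/n,\\ \log(1/|x|)/\sqrt{\log n},& 1/n<|x|\le 1,\\ 0,& |x|>1,\end{cases}
\end{equation*}
(or a smooth variant), and verify that $\|(-\Delta)^{1/4}w_n\|_2\to 1$ while $\|w_n\|_2^2\to 0$; meanwhile on the core $\{|x|\le 1/n\}$ one has $\pi w_n^2\equiv\log n$, so $e^{\pi w_n^2}-1\ge n-1$ there and $\int_{\mathbb{R}}(e^{\pi w_n^2}-1)dx\gtrsim 1$. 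Dividing by $\|w_n\|_2^2\to 0$ forces the ratio to diverge, which handles $\alpha=\pi$ and a fortiori every $\alpha>\pi$.

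The main obstacle is the sharp bounded-domain fractional Trudinger--Moser inequality invoked in the finiteness part: unlike the local case, the non-locality of $(-\Delta)^{1/4}$ makes cutoffs and truncations delicate, so the exact constant $\pi$ has to be obtained through the harmonic extension representation (Caffarelli--Silvestre) or the subordination to the Poisson semigroup, rather than by a naive localization. Equally technical is the explicit asymptotic $\|(-\Delta)^{1/4}w_n\|_2=1+o(1)$ for the Moser sequence, which in the one-dimensional fractional setting requires careful matching of logarithmic integrals in the Gagliardo seminorm and is where the sharp threshold $\pi$ ultimately comes from.
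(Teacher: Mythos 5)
The paper does not prove this lemma at all: it is quoted verbatim from Takahashi \cite[Theorem 1]{Taka}, so there is no internal argument to compare against. Your outline (rearrangement, splitting at the level $u=1$, a bounded-domain fractional Trudinger--Moser inequality on the superlevel set, and a logarithmic Moser sequence for sharpness) is indeed the standard route, and essentially the one followed in the cited source; the specific sequence $w_n$ you write down is exactly the $\varpi_n$ the paper later borrows from \cite{Taka} in Section \ref{minimax}. However, two steps are not correct as stated and need repair.

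First, on $\Omega=\{u>1\}$ you apply the bounded-domain inequality of Iula--Maalaoui--Martinazzi directly to $u$, but that inequality is for functions in $\widetilde H^{1/2}_0(\Omega)$ (extension by zero), and $u$ does not vanish outside $\Omega$; moreover restricting $u$ to $\Omega$ does not control the full Gagliardo seminorm needed there. The standard fix is to pass to the truncation $v=(u-1)_+$, which is supported in $\overline\Omega$ and satisfies $\|(-\Delta)^{1/4}v\|_2\le\|(-\Delta)^{1/4}u\|_2\le 1$ since truncation does not increase the seminorm, and then absorb the shift via $u^2\le(1+\delta)v^2+C_\delta$ on $\Omega$, using the strict inequality $\alpha<\pi$ to keep $(1+\delta)\alpha<\pi$. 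Second, in the sharpness part the qualitative statement $\|(-\Delta)^{1/4}w_n\|_2^2=1+o(1)$ is not enough at the critical exponent $\alpha=\pi$: to make $w_n$ admissible you must normalize by $\kappa_n:=\|(-\Delta)^{1/4}w_n\|_2$, and on the core the exponential then yields $n^{1/\kappa_n^2}$ rather than $n$, so with $\kappa_n^2=1+\varepsilon_n$ the quotient behaves like $\log n\cdot n^{-\varepsilon_n}$; this diverges only if $\varepsilon_n$ is quantitatively small (e.g.\ $\varepsilon_n=O(1/\log n)$), so the proof requires the rate in the seminorm expansion, not just $1+o(1)$. (For $\alpha>\pi$ the qualitative bound does suffice.) With these two corrections your strategy goes through and reproduces the cited theorem.
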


\begin{lemma}\cite[Lemma 2.3]{DT}\label{alge}
Suppose that $a_1, a_2, \ldots, a_k\geq 0$ with $a_1+a_2+\cdots +a_k<1$, then there exist $p_1, p_2, \ldots, p_k>1$ satisfying $\frac{1}{p_1}+\frac{1}{p_2}+\cdots +\frac{1}{p_k}=1$ such that $p_ia_i<1$ for $i=1, 2, \ldots, k$. Moreover, if $a_1, a_2, \ldots, a_k\geq 0$ satisfying $a_1+a_2+\cdots +a_k=1$, then there exist $p_1, p_2, \ldots, p_k>1$ such that $\frac{1}{p_1}+\frac{1}{p_2}+\cdots +\frac{1}{p_k}=1$ and $p_ia_i=1$ for $i=1, 2, \ldots, k$.
\end{lemma}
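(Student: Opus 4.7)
The plan is to exhibit explicit choices in both cases; the two statements are essentially independent and reduce to elementary bookkeeping with convex combinations.

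For the first part (strict inequality $\sum_j a_j < 1$), I would reformulate the conclusion in terms of $q_i := 1/p_i$: one must find $q_i \in (0,1)$ with $\sum_i q_i = 1$ and $q_i > a_i$ for each $i$. Because the slack $\sigma := 1 - \sum_j a_j$ is strictly positive, I would distribute it uniformly across the indices by setting
\[
q_i := a_i + \frac{\sigma}{k}, \qquad p_i := \frac{1}{q_i}.
\]
The identities $\sum_i q_i = \sum_j a_j + \sigma = 1$ and $q_i > a_i$ are immediate, so the only step deserving verification is the strict bound $q_i < 1$ (which is exactly what gives $p_i > 1$). Since every $a_i$ is controlled by the full sum, $a_i \leq \sum_j a_j = 1-\sigma$, one obtains
\[
q_i \leq (1-\sigma) + \frac{\sigma}{k} = 1 - \sigma\Big(1 - \frac{1}{k}\Big) < 1,
\]
provided $k \geq 2$. (The case $k=1$ is vacuous: $\sum_i 1/p_i = 1$ forces $p_1 = 1$, incompatible with $p_1 > 1$, so the lemma is meaningful only for $k\geq 2$.)

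For the second part (equality $\sum_j a_j = 1$), the constraints $p_i > 1$ and $p_i a_i = 1$ determine $p_i$ uniquely: one is forced to take $p_i := 1/a_i$. This implicitly requires $a_i \in (0,1)$ for every $i$, a tacit hypothesis I would record explicitly (if some $a_i$ vanished, the equation $p_i a_i = 1$ would be unsolvable). With this definition the three conclusions $p_i > 1$, $p_i a_i = 1$ and $\sum_i 1/p_i = \sum_i a_i = 1$ all hold by inspection.

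The main obstacle, such as it is, is confined to the first part: ensuring that the perturbed value $a_i + \sigma/k$ stays strictly below $1$ so that the inequality $p_i > 1$ is genuinely strict. This hinges on the elementary observations that no single $a_i$ can exceed $\sum_j a_j$, together with the strict positivity of the residual $\sigma$. There is no analytic content beyond this; the proof is a short finite-dimensional argument that should occupy only a few lines.
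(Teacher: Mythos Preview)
Your argument is correct. The paper does not actually prove this lemma; it merely imports it from \cite[Lemma~2.3]{DT} without argument, so there is no in-paper proof to compare against. Your explicit construction---setting $q_i := a_i + \sigma/k$ with $\sigma := 1 - \sum_j a_j$ in the strict case, and $p_i := 1/a_i$ in the equality case---is the natural elementary verification, and the caveats you flag (the vacuity of $k=1$ in the first part, and the tacit requirement $a_i\in(0,1)$ in the second) are accurate observations about the statement as written. Since the paper only ever invokes the lemma with $k=2$ and strictly positive summands, these edge cases are irrelevant to its applications.
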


\section{{\bfseries The variational framework}}\label{vf}
For the fractional Laplacian operator, the special case when $s=1/2$ is called the square of the Laplacian. We recall the definition of the fractional Sobolev space
\begin{equation*}
H^{1/2}(\mathbb{R})= \Big\{ u \in L^2(\mathbb{R}): \int_{\mathbb{R}}\int_{\mathbb{R}}\frac{|u(x)-u(y)|^2}{|x-y|^2}\mathrm{d}x\mathrm{d}y<\infty \Big\},
\end{equation*}
endowed with the standard norm
\begin{align*}
\|u\|_{1/2}=\Big(\frac{1}{2\pi}[u]_{1/2}^2+\int_{\mathbb{R}}|u|^2\mathrm{d}x\Big)^{1/2},
\end{align*}
where the term
\begin{equation*}
	[u]_{1/2}=\Big(\int_{\mathbb{R}}\int_{\mathbb{R}}\frac{|u(x)-u(y)|^2}{|x-y|^{2}}\,\mathrm{d}x\mathrm{d}y\Big)^{1/2}
\end{equation*}
denotes the Gagliardo semi-norm of a function $u$. Moreover, by \cite [Proposition 3.6]{di}, we have
\begin{align*}
\|(-\Delta)^{1/4}u\|_2^2=\frac{1}{2\pi}\int_{\mathbb{R}}\int_{\mathbb{R}}\frac{|u(x)-u(y)|^2}{|x-y|^{2}}\,\mathrm{d}x\mathrm{d}y,\ \ \mbox{for any}\  u\in H^{1/2}(\mathbb{R}).
\end{align*}

Let $\mathcal{X}:=H^{1/2}(\mathbb{R})\times H^{1/2}(\mathbb{R})$ with the norm
\begin{equation*}
  \|(u,v)\|:=\Big(\frac{1}{2\pi}[u]_{1/2}^2+\frac{1}{2\pi}[v]_{1/2}^2+\int_{\mathbb{R}}|u|^2\mathrm{d}x+\int_{\mathbb{R}}|v|^2\mathrm{d}x\Big)^{1/2}.
\end{equation*}
Moreover, for any $c>0$, we set
\begin{equation*}
  S(c):=\Big\{u\in H^{1/2}(\mathbb{R}):\int_{\mathbb{R}}|u|^2\mathrm{d}x=c^2\Big\},
\end{equation*}
and
\begin{equation*}
  \mathcal{S}:=S(a)\times S(b).
\end{equation*}

Problem \eqref{problem}-\eqref{problem'} has a variational structure and its associated energy functional $\mathcal{J}: \mathcal{X}\to\mathbb{R}$ is defined by
 \begin{align*}
 \mathcal{J}(u,v)=\frac{1}{2}\|(-\Delta)^{1/4}u  \|_2^2+\frac{1}{2}\|(-\Delta)^{1/4}v  \|_2^2-\frac{1}{2}\int_{\mathbb{R}}(I_\mu*F(u,v))F(u,v)\mathrm{d}x.
 \end{align*}
  By using assumptions $(F_1)$ and $(F_2)$, it follows that for any $\zeta>0$, $q>1$ and $\alpha>\pi$, there exists $C>0$ such that
\begin{align*}
|F_{z_1}(z)|,|F_{z_2}(z)|\leq \zeta |z|^{\kappa}+C|z|^{q-1}(e^{\alpha |z|^2}-1), \quad \mbox{for any $z=(z_1,z_2)\in\mathbb{R}\times \mathbb{R}$},
\end{align*}
and using $(F_3)$, we have
\begin{align}\label{Ft}
|F(z)|\leq \zeta |z|^{\kappa+1}+C|z|^{q} (e^{\alpha |z|^2}-1), \quad \mbox{for any $z\in\mathbb{R}\times \mathbb{R}$}.
\end{align}
By \eqref{HLS} and \eqref{Ft}, we know $\mathcal{J}$ is well defined in $\mathcal{X}$ and $\mathcal{J}\in C^1(\mathcal{X},\mathbb{R})$ with
\begin{align*}
&\langle \mathcal{J}'(u,v),(\varphi,\psi)\rangle\\
=&
\frac{1}{2 \pi}\int_{\mathbb{R}}\int_{\mathbb{R}}\frac{[u(x)-u(y)][\varphi(x)-\varphi(y)]}{|x-y|^{2}}\mathrm{d}x\mathrm{d}y+\frac{1}{2 \pi}\int_{\mathbb{R}}\int_{\mathbb{R}}\frac{[v(x)-v(y)][\psi(x)-\psi(y)]}{|x-y|^{2}}\mathrm{d}x\mathrm{d}y\\
&-\int_{\mathbb{R}}(I_\mu*F(u,v))F_u(u,v)\varphi \mathrm{d}x-\int_{\mathbb{R}}(I_\mu*F(u,v))F_v(u,v)\phi \mathrm{d}x,
\end{align*}
for any $(u,v),(\varphi,\psi)\in \mathcal{X}$. Hence, a critical point of $\mathcal{J}$ on $\mathcal{S}$
corresponds to a solution of problem \eqref{problem}-\eqref{problem'}.

To understand the geometry of $\mathcal{J}|_{\mathcal{S}}$, for any $\beta\in \mathbb{R}$ and $u\in H^{1/2}(\mathbb{R})$, we define
\begin{equation*}
  \mathcal{H}(u,\beta)(x):=e^{\frac{\beta}{2}}u(e^\beta x),\quad \text{for a.e. $x\in \mathbb{R}$}.
\end{equation*}
One can easily check that $\| \mathcal{H}(u,\beta)\|_2=\|u\|_2$ for any $\beta\in \mathbb{R}$. As a consequence, for any $(u,v)\in \mathcal{S}$, it holds that $\mathcal{H}((u,v),\beta):=(\mathcal{H}(u,\beta),\mathcal{H}(v,\beta))\in \mathcal{S}$ for any $\beta\in \mathbb{R}$, and
$\mathcal{H}((u,v),\beta_1+\beta_2)=\mathcal{H}(\mathcal{H}((u,v),\beta_1),\beta_2)=\mathcal{H}(\mathcal{H}((u,v),\beta_2),\beta_1)$ for any $\beta_1,\beta_2\in \mathbb{R}$.
By Lemma \ref{mountain}, we find that $\mathcal{J}$ is unbounded from below on $\mathcal{S}$.
 It is well known that all critical points of $\mathcal{J}|_{\mathcal{S}}$ belong to the Poho\u{z}aev manifold (see \cite{chenwang2,dSS,MS2})
   \begin{align*}
   \mathcal{P}(a,b)=\big\{(u,v)\in \mathcal{S}: P(u,v)=0\big\},
   \end{align*}
where
  \begin{align*}
  P(u,v)=\|(-\Delta)^{1/4}u  \|_2^2+\|(-\Delta)^{1/4}v\|_2^2 -\int_{\mathbb{R}}(I_\mu*F(u,v))\widetilde{F}(u,v)\mathrm{d}x,
  \end{align*}
  where $\widetilde F(z)=z\cdot \nabla F(z)-(2-\mu)F(z)$.
This enlightens us to consider the minimization of $\mathcal{J}$ on $\mathcal{P}(a,b)$, i.e.,
\begin{align*}
   m(a,b)=\inf_{(u,v)\in\mathcal{P}(a,b)}\mathcal{J}(u,v).
   \end{align*}
Our task is to show that $m(a,b)$ is a critical level of $\mathcal{J}|_{\mathcal{S}}$. As will be shown in Lemma \ref{pa}, $\mathcal{P}(a,b)$ is nonempty, thus any critical point $(u,v)$ of $\mathcal{J}|_{\mathcal{S}}$ with $\mathcal{J}(u,v)=m(a,b)$ is a ground state solution of problem \eqref{problem}-\eqref{problem'}.

With a similar argument of \cite[Lemma 3.5]{BS2}, we have the following lemma.
\begin{lemma}\label{m}
Assume that $u_n\to u$ in $H^{1/2}(\mathbb{R})$ and $\beta_n\to \beta$ in $\mathbb{R}$ as $n\to\infty$, then $\mathcal{H}(u_n,\beta_n)\to \mathcal{H}(u,\beta)$ in $H^{1/2}(\mathbb{R})$ as $n\to\infty$.
\end{lemma}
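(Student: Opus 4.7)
The plan is to use the triangle inequality to split the difference as
\[
\mathcal{H}(u_n,\beta_n) - \mathcal{H}(u,\beta) = \mathcal{H}(u_n - u, \beta_n) + \bigl[\mathcal{H}(u,\beta_n) - \mathcal{H}(u,\beta)\bigr],
\]
exploiting linearity of $\mathcal{H}(\cdot,\beta)$. First I would record the two scaling identities
\[
\|\mathcal{H}(w,\beta)\|_2 = \|w\|_2, \qquad [\mathcal{H}(w,\beta)]_{1/2}^2 = e^{\beta}\,[w]_{1/2}^2,
\]
both obtained by the change of variables $x' = e^{\beta}x$, $y' = e^{\beta}y$ in the integrals defining the $L^2$-norm and the Gagliardo seminorm (the Jacobians and the factor $|x-y|^{-2}$ combining cleanly with the prefactor $e^{\beta/2}$). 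Combined with the identity $\|(-\Delta)^{1/4}w\|_2^2 = \tfrac{1}{2\pi}[w]_{1/2}^2$, this yields $\|\mathcal{H}(w,\beta)\|_{1/2}^2 = \|w\|_2^2 + \tfrac{1}{2\pi}e^{\beta}[w]_{1/2}^2$.

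Applying this with $w = u_n - u$ and noting that $\beta_n \to \beta$ keeps $e^{\beta_n}$ bounded, I would obtain $\|\mathcal{H}(u_n - u, \beta_n)\|_{1/2} \leq C\,\|u_n - u\|_{1/2} \to 0$, so the first piece is disposed of. For the second piece, I would use the group property $\mathcal{H}(u,\beta_n) = \mathcal{H}(\mathcal{H}(u,\beta_n - \beta),\beta)$ together with the scaling bound to estimate
\[
\|\mathcal{H}(u,\beta_n) - \mathcal{H}(u,\beta)\|_{1/2} \leq C\,\|\mathcal{H}(u,\gamma_n) - u\|_{1/2},
\]
where $\gamma_n := \beta_n - \beta \to 0$. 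This reduces the lemma to establishing that $\mathcal{H}(u,\gamma) \to u$ in $H^{1/2}(\mathbb{R})$ as $\gamma \to 0$ for each fixed $u$.

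For the latter statement I would run a standard $\varepsilon/3$ density argument. Given $\varepsilon > 0$, choose $\phi \in C_c^\infty(\mathbb{R})$ with $\|u - \phi\|_{1/2} < \varepsilon$; the scaling identities above bound both $\|\mathcal{H}(u - \phi, \gamma)\|_{1/2}$ and $\|u - \phi\|_{1/2}$ uniformly for $|\gamma|$ small by a constant multiple of $\varepsilon$. It then suffices to prove $\|\mathcal{H}(\phi, \gamma) - \phi\|_{1/2} \to 0$ as $\gamma \to 0$ for $\phi \in C_c^\infty(\mathbb{R})$. Passing to the Fourier side, where $\widehat{\mathcal{H}(\phi,\gamma)}(\xi) = e^{-\gamma/2}\widehat{\phi}(e^{-\gamma}\xi)$, and working with the equivalent norm $\int_{\mathbb{R}}(1 + |\xi|)|\widehat{v}(\xi)|^2\,\mathrm{d}\xi$, the integrand $(1+|\xi|)\bigl|e^{-\gamma/2}\widehat{\phi}(e^{-\gamma}\xi) - \widehat{\phi}(\xi)\bigr|^2$ converges pointwise to zero as $\gamma \to 0$ by smoothness of $\widehat{\phi}$, and is uniformly dominated (for $|\gamma|$ bounded) by an integrable majorant built from the rapid decay of the Schwartz function $\widehat{\phi}$. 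The dominated convergence theorem then finishes the proof.

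The main technical obstacle is precisely this continuity of the dilation action $\gamma \mapsto \mathcal{H}(u,\gamma)$ in the fractional Gagliardo seminorm at $\gamma = 0$: a direct estimate of $[\mathcal{H}(u,\gamma) - u]_{1/2}$ from the double-integral definition is awkward because the dilation mixes with the singular kernel $|x-y|^{-2}$, so switching to the Fourier representation of $(-\Delta)^{1/4}$ and invoking density of $C_c^\infty(\mathbb{R})$ in $H^{1/2}(\mathbb{R})$ is the cleanest route. Everything else is bookkeeping via the exact scaling law $[\mathcal{H}(w,\beta)]_{1/2}^2 = e^{\beta}[w]_{1/2}^2$.
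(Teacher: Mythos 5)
Your proposal is correct, and it is worth noting that the paper itself supplies no argument for this lemma: it simply asserts it ``with a similar argument of [BS2, Lemma 3.5]''. Your write-up is therefore a genuine, self-contained proof of the omitted step, and it follows the natural route: the exact scaling laws $\|\mathcal{H}(w,\beta)\|_2=\|w\|_2$ and $[\mathcal{H}(w,\beta)]_{1/2}^2=e^{\beta}[w]_{1/2}^2$ (both verified correctly), the linear splitting $\mathcal{H}(u_n,\beta_n)-\mathcal{H}(u,\beta)=\mathcal{H}(u_n-u,\beta_n)+\mathcal{H}\bigl(\mathcal{H}(u,\beta_n-\beta)-u,\beta\bigr)$, and the reduction to continuity of the dilation group at $\gamma=0$, which you settle by density of $C_c^\infty(\mathbb{R})$ plus the Fourier-side identity $\widehat{\mathcal{H}(\phi,\gamma)}(\xi)=e^{-\gamma/2}\widehat{\phi}(e^{-\gamma}\xi)$ and dominated convergence; each of these steps checks out, including the uniform operator bound $\max\{1,e^{\gamma/2}\}$ needed to make the $\varepsilon/3$ argument legitimate. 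One remark: the final step can be shortened by a Hilbert-space observation that avoids the Fourier computation entirely. Since $\|\mathcal{H}(u,\gamma)\|_{1/2}^2=\|u\|_2^2+\tfrac{e^{\gamma}}{2\pi}[u]_{1/2}^2\to\|u\|_{1/2}^2$ as $\gamma\to0$, and $\mathcal{H}(u,\gamma)\rightharpoonup u$ in $H^{1/2}(\mathbb{R})$ (weak convergence is easily tested against smooth compactly supported functions using continuity of dilations in $L^2$), weak convergence together with convergence of norms in a Hilbert space already gives $\mathcal{H}(u,\gamma)\to u$ strongly; this is the trick typically used in the cited Bartsch--Soave lemma. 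Your Fourier/dominated-convergence route buys a more hands-on, quantitative control of the seminorm and is equally valid.
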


\begin{lemma}   \label{f}
Assume that $(F_1)$-$(F_3)$ hold, let $\{(u_n,v_n)\}\subset \mathcal{S}$ be a bounded $(PS)_{m_{a,b}}$ sequence of $\mathcal{J}|_{\mathcal{S}}$, up to a subsequence,
  if $(u_n,v_n)\rightharpoonup (u,v)$ in $\mathcal{X}$ and
  \begin{equation*}
  \int_{\mathbb{R}}(I_\mu*F(u_n,v_n))\big[(u_n,v_n)\cdot \nabla F(u_n,v_n)\big]\mathrm{d}x\leq K_0
 \end{equation*}
 for some $K_0>0$, then for any $\phi\in C_0^\infty(\mathbb{R})$, we have
 \begin{align*}
 \int_{\mathbb{R}}(I_\mu*F(u_n,v_n))F_{u_n}(u_n,v_n)\phi\mathrm{d}x\to \int_{\mathbb{R}}(I_\mu*F(u,v))F_u(u,v)\phi\mathrm{d}x,\quad \mbox{as $n\rightarrow\infty$},
 \end{align*}
  \begin{align*}
 \int_{\mathbb{R}}(I_\mu*F(u_n,v_n))F_{v_n}(u_n,v_n)\phi\mathrm{d}x\to \int_{\mathbb{R}}(I_\mu*F(u,v))F_v(u,v)\phi\mathrm{d}x,\quad \mbox{as $n\rightarrow\infty$}.
 \end{align*}
 and
  \begin{align*}
 \int_{\mathbb{R}}(I_\mu*F(u_n,v_n))F(u_n,v_n)\mathrm{d}x\to \int_{\mathbb{R}}(I_\mu*F(u,v))F(u,v)\mathrm{d}x,\quad \mbox{as $n\rightarrow\infty$}.
 \end{align*}
\end{lemma}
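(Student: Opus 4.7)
The plan is to combine almost-everywhere pointwise convergence of the nonlinearities with a strong $L^{2/(2-\mu)}(\mathbb{R})$ convergence of $F(u_n,v_n)$ obtained through Vitali's theorem, and then invoke Hardy-Littlewood-Sobolev duality (Proposition \ref{hardy}) to pass to the limit in all three Choquard-type integrals. I would first extract a subsequence along which $(u_n,v_n) \to (u,v)$ a.e.\ in $\mathbb{R}$, using boundedness in $\mathcal{X}$ and the compact embedding $H^{1/2}(\mathbb{R}) \hookrightarrow L^p_{\mathrm{loc}}(\mathbb{R})$ for $p\in[2,\infty)$. Continuity from $(F_1)$ then propagates to $F(u_n,v_n)\to F(u,v)$, $F_{u_n}(u_n,v_n)\to F_u(u,v)$, $F_{v_n}(u_n,v_n)\to F_v(u,v)$ a.e. Moreover, $(F_3)$ yields $F\geq 0$ and $\theta F(u_n,v_n)\leq (u_n,v_n)\cdot \nabla F(u_n,v_n)$, so the hypothesis supplies the key a priori bound
\[
\int_{\mathbb{R}}(I_\mu*F(u_n,v_n))\,F(u_n,v_n)\,\mathrm{d}x \leq \frac{K_0}{\theta}.
\]

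The core step is upgrading this a.e.\ convergence to a strong $L^{2/(2-\mu)}(\mathbb{R})$ convergence of $F(u_n,v_n)$. Using $(F_1)$--$(F_3)$ together with the pointwise bound $|F(z)|\leq \zeta|z|^{\kappa+1}+C|z|^q(e^{\alpha|z|^2}-1)$, the fractional Gagliardo-Nirenberg-Sobolev inequality (Lemma \ref{GN}), a three-factor Hölder decomposition (Lemma \ref{alge}), and the sharp Trudinger-Moser inequality on $H^{1/2}(\mathbb{R})$ (Lemma \ref{tm}), I would establish uniform integrability of $|F(u_n,v_n)|^{2/(2-\mu)}$; tightness at infinity is furnished by the bound $K_0/\theta$ displayed above, since the double-integral control rules out mass concentrating outside large balls. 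By Vitali's theorem, $F(u_n,v_n)\to F(u,v)$ strongly in $L^{2/(2-\mu)}(\mathbb{R})$. A parallel argument, with the integrand localized by the compact support of $\phi$, gives $F_{u_n}(u_n,v_n)\phi\to F_u(u,v)\phi$ and $F_{v_n}(u_n,v_n)\phi\to F_v(u,v)\phi$ in $L^{2/(2-\mu)}(\mathbb{R})$ (tightness is automatic from the compact support).

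The three limits then follow by writing each difference as
\[
\int_{\mathbb{R}} [I_\mu*F(u_n,v_n)-I_\mu*F(u,v)]\,G_n\,\mathrm{d}x + \int_{\mathbb{R}} (I_\mu*F(u,v))(G_n-G)\,\mathrm{d}x,
\]
with $G_n \in \{F_{u_n}(u_n,v_n)\phi,\ F_{v_n}(u_n,v_n)\phi,\ F(u_n,v_n)\}$ and the obvious $G$, and estimating each piece by Hölder in the conjugate exponents $2/\mu$ and $2/(2-\mu)$, using that Proposition \ref{hardy} promotes the $L^{2/(2-\mu)}$ strong convergence of $F(u_n,v_n)$ to the $L^{2/\mu}$ strong convergence of $I_\mu*F(u_n,v_n)$. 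The principal obstacle is the uniform-integrability step: because the threshold $\pi$ in $(F_2)$ matches exactly the sharp constant in Lemma \ref{tm}, the Hölder parameters must be tuned so that $p'\alpha\|(-\Delta)^{1/4}u_n\|_2^2$ (and its $v_n$ analogue) remains strictly below $\pi$ uniformly in $n$. This forces a quantitative smallness of $\|(-\Delta)^{1/4}u_n\|_2$, which presumably is secured by the upper bound on $m(a,b)$ derived in Section \ref{minimax}; without such quantitative control the critical exponential factor cannot be absorbed.
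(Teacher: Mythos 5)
There is a genuine gap, and it sits exactly where you flag it yourself. Your uniform-integrability step needs $\frac{2p\alpha\nu}{2-\mu}\|(-\Delta)^{1/4}u_n\|_2^2<\pi$ (and the $v_n$ analogue) uniformly in $n$, and you say this is ``presumably secured by the upper bound on $m(a,b)$'' from Section \ref{minimax}. It is not. At the point where Lemma \ref{f} is applied, the only kinetic control available for the $(PS)$ sequence is $\|(-\Delta)^{1/4}u_n\|_2^2+\|(-\Delta)^{1/4}v_n\|_2^2=2\mathcal{J}(u_n,v_n)+\int_{\mathbb{R}}(I_\mu*F(u_n,v_n))F(u_n,v_n)\mathrm{d}x\leq 2m(a,b)+K_0/\theta+o_n(1)$, which is in general far above the Trudinger--Moser threshold of Lemma \ref{tm} (with $K_0=\frac{2\theta m(a,b)}{\theta+\mu-3}$ this bound is $\frac{2(\theta+\mu-2)}{\theta+\mu-3}m(a,b)$, never below $2m(a,b)$). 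The smallness $\limsup_n(\|(-\Delta)^{1/4}u_n\|_2^2+\|(-\Delta)^{1/4}v_n\|_2^2)\leq 2m(a,b)<\frac{2-\mu}{2}$ is only derived inside Lemma \ref{non}, in the vanishing case, \emph{after} invoking Lemma \ref{f} to conclude $\int_{\mathbb{R}}(I_\mu*F(u_n,v_n))F(u_n,v_n)\mathrm{d}x=o_n(1)$; so using that smallness to prove Lemma \ref{f} is circular. The whole point of the hypothesis $\int_{\mathbb{R}}(I_\mu*F(u_n,v_n))[(u_n,v_n)\cdot\nabla F(u_n,v_n)]\mathrm{d}x\leq K_0$ is to run a de Figueiredo--Miyagaki--Ruf type $L^1$ argument (this is what the proof of \cite[Lemma 5.1]{chenwang2}, to which the paper defers, does): split the integrals over $\{|(u_n,v_n)|\leq M\}$, where $(F_1)$--$(F_2)$ plus a.e.\ convergence and dominated convergence apply, and over $\{|(u_n,v_n)|>M\}$, whose contribution against a fixed bounded, compactly supported $\phi$ is made uniformly small via the $K_0$ bound as $M\to\infty$. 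That route never requires uniform subcritical exponential integrability, which is precisely what your Vitali/Trudinger--Moser route cannot supply here.

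A second, independent problem is your treatment of the third limit: the claim that the bound $\int_{\mathbb{R}}(I_\mu*F(u_n,v_n))F(u_n,v_n)\mathrm{d}x\leq K_0/\theta$ ``rules out mass concentrating outside large balls'' is unjustified. The double integral $\int_{\mathbb{R}}\int_{\mathbb{R}}F_n(x)F_n(y)|x-y|^{-\mu}\mathrm{d}x\mathrm{d}y$ is invariant under translations, so a uniform bound on it in no way prevents $F(u_n,v_n)$-mass from escaping to spatial infinity; hence tightness of $|F(u_n,v_n)|^{2/(2-\mu)}$ on all of $\mathbb{R}$ does not follow, and your Vitali step yielding strong $L^{2/(2-\mu)}(\mathbb{R})$ convergence of $F(u_n,v_n)$ (which you then feed into Proposition \ref{hardy}) is not established. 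The global convergence $\int_{\mathbb{R}}(I_\mu*F(u_n,v_n))F(u_n,v_n)\mathrm{d}x\to\int_{\mathbb{R}}(I_\mu*F(u,v))F(u,v)\mathrm{d}x$ is the delicate part of the lemma and needs an argument exploiting the $K_0$ hypothesis and the structure conditions, not just a.e.\ convergence plus the nonlocal energy bound. So both the local (tested against $\phi$) statements and the global statement remain unproved as written; the correct mechanism is the $M$-level truncation argument of the cited reference rather than Trudinger--Moser based equi-integrability.
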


 \begin{proof}
 The proof is similar to the one of \cite[Lemma 5.1]{chenwang2}, so we omit it.
 \end{proof}

\section{{\bfseries The estimation for the upper bound of $m(a,b)$}}\label{minimax}
In this section, by using the condition $(F_7)$, we estimate the upper bound of $m(a,b)$.

\begin{lemma}\label{mountain}
Assume that $(F_1)$-$(F_3)$ hold. Let $(u,v)\in \mathcal{S}$ be arbitrary but fixed, then we have

(i)  $\mathcal{J}(\mathcal{H}((u,v),\beta))\to0^+$ as $\beta\to -\infty$;

(ii)  $\mathcal{J}(\mathcal{H}((u,v),\beta))\to -\infty$ as $\beta\to +\infty$.
\end{lemma}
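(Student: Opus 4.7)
The strategy is to first work out how each term of $\mathcal{J}$ rescales under the 1D dilation $\mathcal{H}(\cdot,\beta)$. Changing variables $y=e^{\beta}x$ preserves the $L^2$ norm, and applying the same change in the Gagliardo formula gives $\|(-\Delta)^{1/4}\mathcal{H}(u,\beta)\|_2^2=e^{\beta}\|(-\Delta)^{1/4}u\|_2^2$. The two-variable substitution $(x,y)\mapsto(e^{\beta}x,e^{\beta}y)$ in the Choquard double integral produces
\[
\int_{\mathbb{R}}(I_\mu*F(\mathcal{H}(u,\beta),\mathcal{H}(v,\beta)))F(\mathcal{H}(u,\beta),\mathcal{H}(v,\beta))\,\mathrm{d}x=e^{(\mu-2)\beta}B(\beta),
\]
where $B(\beta):=\int_{\mathbb{R}}(I_\mu*F(e^{\beta/2}u,e^{\beta/2}v))F(e^{\beta/2}u,e^{\beta/2}v)\,\mathrm{d}x$. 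Setting $A_0:=\|(-\Delta)^{1/4}u\|_2^2+\|(-\Delta)^{1/4}v\|_2^2>0$, everything reduces to controlling
\[
\mathcal{J}(\mathcal{H}((u,v),\beta))=\tfrac{1}{2}e^{\beta}A_0-\tfrac{1}{2}e^{(\mu-2)\beta}B(\beta)
\]
in the two regimes.

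For (i), I would restrict to $\beta\le 0$ and start from \eqref{Ft}, then upgrade it using the elementary inequality $e^{a\tau}-1\le a(e^{\tau}-1)$ valid for $0\le a\le 1$ (applied with $a=e^{\beta}$, $\tau=\alpha|(u,v)|^2$) to obtain
\[
|F(e^{\beta/2}u,e^{\beta/2}v)|\le\zeta e^{(\kappa+1)\beta/2}|(u,v)|^{\kappa+1}+Ce^{(q+2)\beta/2}|(u,v)|^q(e^{\alpha|(u,v)|^2}-1).
\]
Plugging this into Hardy-Littlewood-Sobolev (Proposition \ref{hardy}) with $r=t=2/(2-\mu)$, and handling the exponential factor by H\"older's inequality together with the Adachi-Tanaka estimate (Lemma \ref{tm}), yields $B(\beta)\le C_1 e^{(\kappa+1)\beta}+C_2 e^{(q+2)\beta}$, so
\[
e^{(\mu-2)\beta}B(\beta)\le C_1 e^{(\kappa+\mu-1)\beta}+C_2 e^{(q+\mu)\beta}.
\]
Since $(F_1)$ gives $\kappa>2-\mu$, both exponents exceed $1$ (any $q>1$ suffices), so the Choquard piece is $o(e^{\beta})$ as $\beta\to-\infty$, and thus $\mathcal{J}(\mathcal{H}((u,v),\beta))=\tfrac{1}{2}e^{\beta}A_0(1+o(1))\to 0^+$.

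For (ii), I would exploit the standard consequence of $(F_3)$ that $F(tz)\ge t^{\theta}F(z)$ for all $t\ge 1$ and $z\ne(0,0)$, with in particular $F(z)>0$ for such $z$ (both follow from monitoring $t\mapsto t^{-\theta}F(tz)$). Taking $t=e^{\beta/2}\ge 1$ for $\beta\ge 0$ yields pointwise $F(e^{\beta/2}u,e^{\beta/2}v)\ge e^{\theta\beta/2}F(u,v)$, hence
\[
e^{(\mu-2)\beta}B(\beta)\ge e^{(\theta+\mu-2)\beta}\int_{\mathbb{R}}(I_\mu*F(u,v))F(u,v)\,\mathrm{d}x.
\]
The integral on the right is strictly positive: since $a,b>0$ the set $\{x:(u(x),v(x))\ne(0,0)\}$ has positive Lebesgue measure, on which $F(u,v)>0$, and $I_\mu>0$. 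Because $\theta>3-\mu$ gives $\theta+\mu-2>1$, this term swamps $e^{\beta}A_0$ and drives $\mathcal{J}(\mathcal{H}((u,v),\beta))\to-\infty$.

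The main technical subtlety is the uniform control of the exponential-critical nonlinearity under the scaling in the limit $\beta\to-\infty$; the decisive trick is the sub-multiplicativity $e^{\alpha e^{\beta}s^2}-1\le e^{\beta}(e^{\alpha s^2}-1)$ valid for $\beta\le 0$, which extracts an explicit algebraic prefactor $e^{\beta}$ from the argument and makes the Hardy-Littlewood-Sobolev / Trudinger-Moser estimate uniform in $\beta$.
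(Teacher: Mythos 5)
Your scaling computations and your part (ii) are correct and essentially coincide with the paper's argument: the paper integrates the differential inequality for $\mathcal{W}(t)$ coming from $(F_3)$, which is exactly your pointwise bound $F(tz)\ge t^{\theta}F(z)$, $t\ge1$, in integrated form (inequality \eqref{fff}), and then uses $\theta+\mu-2>1$; your explicit remark that $\int_{\mathbb{R}}(I_\mu*F(u,v))F(u,v)\,\mathrm{d}x>0$ because $F>0$ wherever $(u,v)\neq(0,0)$ is a correct and harmless addition.

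The problem is in part (i). After extracting the factor $e^{\beta}$ via $e^{a\tau}-1\le a(e^{\tau}-1)$ (which is a valid concavity inequality), your constant $C_2$ in the bound $B(\beta)\le C_1e^{(\kappa+1)\beta}+C_2e^{(q+2)\beta}$ is controlled by an integral of the form $\int_{\mathbb{R}}\big(e^{\frac{2\alpha\nu}{2-\mu}(|u|^2+|v|^2)}-1\big)\,\mathrm{d}x$ for the \emph{fixed} pair $(u,v)$. Lemma \ref{tm} only controls such integrals when the exponent multiplied by the squared Dirichlet seminorm of the function being inserted is strictly below $\pi$; since $(u,v)\in\mathcal{S}$ is arbitrary, $\|(-\Delta)^{1/4}u\|_2^2+\|(-\Delta)^{1/4}v\|_2^2$ may be large and this condition can fail, so the step ``H\"older plus the Adachi--Tanaka estimate'' is not justified as written. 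The finiteness you need is in fact true for every individual $H^{1/2}(\mathbb{R})$ function at every exponent, but it requires a separate density argument (write $u=u_1+u_2$ with $u_1\in C_0^\infty(\mathbb{R})$ bounded and $\|(-\Delta)^{1/4}u_2\|_2$ small, then apply \eqref{att} to $u_2$), which you would have to supply. The paper avoids this issue altogether by applying \eqref{Ft}, the Hardy--Littlewood--Sobolev inequality and \eqref{att} directly to the rescaled functions $\mathcal{H}(u,\beta)$, $\mathcal{H}(v,\beta)$: their Dirichlet seminorms scale like $e^{\beta}$ and are automatically small for $\beta<\beta_1\ll0$, so after the splitting of Lemma \ref{alge} the Trudinger--Moser bound applies legitimately and produces the decay rates $e^{(\kappa+\mu-1)\beta}$ and $e^{(q\nu'+\mu-2)\beta/\nu'}$. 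Either fix (adding the fixed-function exponential integrability lemma, or estimating the rescaled functions as the paper does) closes the gap; the remaining bookkeeping in your argument, in particular $\kappa+\mu-1>1$ and $q+\mu>1$, is correct and gives the stated conclusion $\mathcal{J}(\mathcal{H}((u,v),\beta))\to0^+$.
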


\begin{proof}
$(i)$ By a straightforward calculation, we have
\begin{align*}
\int_{\mathbb{R}}|\mathcal{H}(u,\beta) |^2 \mathrm{d}x=a^2,\ \ \int_{\mathbb{R}}|\mathcal{H}(v,\beta) |^2 \mathrm{d}x=b^2,
\end{align*}
\begin{align*}
\int_{\mathbb{R}}\int_{\mathbb{R}}\frac{|\mathcal{H}(u,\beta)(x)-\mathcal{H}(u,\beta)(y)|^2}{|x-y|^2}
\mathrm{d}x\mathrm{d}y
=e^{\beta}\int_{\mathbb{R}}\int_{\mathbb{R}}\frac{| u(x)-u(y)|^2}{|x-y|^2}
\mathrm{d}x\mathrm{d}y,
\end{align*}
\begin{equation*}
  \int_{\mathbb{R}}\int_{\mathbb{R}}\frac{|\mathcal{H}(v,\beta)(x)-\mathcal{H}(v,\beta)(y)|^2}{|x-y|^2}
\mathrm{d}x\mathrm{d}y
=e^{\beta}\int_{\mathbb{R}}\int_{\mathbb{R}}\frac{| v(x)-v(y)|^2}{|x-y|^2}
\mathrm{d}x\mathrm{d}y,
\end{equation*}
and
\begin{equation*}
   \int_{\mathbb{R}}|\mathcal{H}(u,\beta) |^\xi \mathrm{d}x=e^{\frac{(\xi-2)\beta}{2}}\int_{\mathbb{R}}|u|^\xi\mathrm{d}x, \ \ \text{for any $\xi>2$.}
\end{equation*}
Thus there exist $\beta_1<<0$  such that $\frac{2}{2-\mu}\big(\|(-\Delta)^{1/4}\mathcal{H}(u,\beta)  \|_{2}^2+\|(-\Delta)^{1/4}\mathcal{H}(v,\beta)  \|_{2}^2\big)<  1
$ for any $\beta<\beta_1$. Then using Lemma \ref{alge} with $k=2$, we know, for any $a_1+a_2<1$, there exists $p_1,p_2>1$ satisfying $\frac{1}{p_1}+\frac{1}{p_2}=1$ such that $p_1a_1<1$, $p_2a_2<1$. Hence, there exists $p_1,p_2>1$ satisfying $\frac{1}{p_1}+\frac{1}{p_2}=1$ such that
\begin{equation*}
\frac{2p_1}{2-\mu} \|(-\Delta)^{1/4}\mathcal{H}(u,\beta)  \|_{2}^2< 1,\quad \frac{2p_2}{2-\mu} \|(-\Delta)^{1/4}\mathcal{H}(v,\beta)  \|_{2}^2< 1,\quad \text{for any $\beta<\beta_1$}.
\end{equation*}
Fix $\alpha>\pi$ close to $\pi$ and $\nu>1$ close to $1$ such that
\begin{equation*}
  \frac{2p_1\alpha \nu}{2-\mu}\|(-\Delta)^{1/4}\mathcal{H}(u,\beta)  \|_{2}^2<\pi,\quad  \frac{2p_2\alpha \nu}{2-\mu}\|(-\Delta)^{1/4}\mathcal{H}(v,\beta)  \|_{2}^2<\pi,\quad \text{for any $\beta<\beta_1$}.
\end{equation*}
Then, for $\frac{1}{\nu}+\frac{1}{\nu'}=1$, using \eqref{HLS}, \eqref{att}, $(\ref{Ft})$, the H\"{o}lder and Young's inequality,
 we have
\begin{align}\label{tain1}
&\int_{\mathbb{R}}\big(I_\mu*F(\mathcal{H}((u,v),\beta) )\big)F(\mathcal{H}((u,v),\beta) )\mathrm{d}x \leq \|F(\mathcal{H}((u,v),\beta))\|^2_{\frac{2}{2-\mu}} 
\nonumber \\ \leq&
\zeta \|\mathcal{H}((u,v),\beta) \|_{\frac{2(\kappa+1)}{2-\mu}}^{2(\kappa+1)}+C \Big[\int_{\mathbb{R}} \big[(e^{\alpha |\mathcal{H}((u,v),\beta)|^2}-1) |\mathcal{H}((u,v),\beta)|^{q}\big]^{\frac{2}{2-\mu}} \mathrm{d}x\Big]^{2-\mu}\nonumber\\
\leq&\zeta \|\mathcal{H}((u,v),\beta) \|_{\frac{2(\kappa+1)}{2-\mu}}^{2(\kappa+1)}+C
\Big[\int_{\mathbb{R}}(e^{\frac{2 \alpha \nu}{2-\mu}|\mathcal{H}((u,v),\beta)|^2}-1)\mathrm{d}x\Big]^{\frac{2-\mu}{\nu}}\| \mathcal{H}((u,v),\beta) \|_{\frac{2q\nu'}{2-\mu}}^{2q}\nonumber\\
\leq &\zeta \|\mathcal{H}((u,v),\beta) \|_{\frac{2(\kappa+1)}{2-\mu}}^{2(\kappa+1)}\nonumber \\
&+C \Big[\frac{1}{p_1}\int_{\mathbb{R}}(e^{\frac{2 p_1\alpha \nu}{2-\mu}|\mathcal{H}(u,\beta)|^2}-1)\mathrm{d}x+\frac{1}{p_2}\int_{\mathbb{R}}(e^{\frac{2 p_2\alpha \nu}{2-\mu}|\mathcal{H}(v,\beta)|^2}-1)\mathrm{d}x\Big]^{\frac{2-\mu}{\nu}}\| \mathcal{H}((u,v),\beta) \|_{\frac{2q\nu'}{2-\mu}}^{2q}\nonumber\\
\leq & C \|\mathcal{H}((u,v),\beta) \|_{\frac{2(\kappa+1)}{2-\mu}}^{2(\kappa+1)}+ C\| \mathcal{H}((u,v),\beta) \|_{\frac{2q\nu'}{2-\mu}}^{2q}\nonumber\\
=& Ce^{(\kappa+\mu-1)\beta}\Big(\|u\|_{\frac{2(\kappa+1)}{2-\mu}}^{\frac{2(\kappa+1)}{2-\mu}}+\|v\|_{\frac{2(\kappa+1)}{2-\mu}}^{\frac{2(\kappa+1)}{2-\mu}}\Big)^{2-\mu}+
Ce^{\frac{(q\nu'+\mu-2)\beta}{\nu'}}\Big(\|u\|_{\frac{2q\nu'}{2-\mu}}^{\frac{2q\nu'}{2-\mu}}+\|v\|_{\frac{2q\nu'}{2-\mu}}^{\frac{2q\nu'}{2-\mu}}\Big)^{\frac{2-\mu}{\nu'}},
\end{align}
for any $\beta<\beta_1$.
Since $\kappa>2-\mu$, $q>1$ and $\nu'$ large enough, it follows that
\begin{align*}
 \mathcal{J}(\mathcal{H}((u,v),\beta))\geq& \frac{1}{2}e^\beta\big( \|(-\Delta)^{1/4}u\|_2^2+ \|(-\Delta)^{1/4}v\|_2^2\big)- Ce^{(\kappa+\mu-1)\beta}\Big(\|u\|_{\frac{2(\kappa+1)}{2-\mu}}^{\frac{2(\kappa+1)}{2-\mu}}+\|v\|_{\frac{2(\kappa+1)}{2-\mu}}^{\frac{2(\kappa+1)}{2-\mu}}\Big)^{2-\mu}\\
 &-
Ce^{\frac{(q\nu'+\mu-2)\beta}{\nu'}}\Big(\|u\|_{\frac{2q\nu'}{2-\mu}}^{\frac{2q\nu'}{2-\mu}}+\|v\|_{\frac{2q\nu'}{2-\mu}}^{\frac{2q\nu'}{2-\mu}}\Big)^{\frac{2-\mu}{\nu'}} \to0^+,\ \ \mbox{as} \ \beta\to-\infty.
\end{align*}

$(ii)$
For any fixed $\beta>>0$,  set
\begin{align*}
\mathcal{W}(t):=\frac{1}{2}\int_{\mathbb{R}}(I_\mu*F(tu,tv))F(tu,tv)\mathrm{d}x,\quad \text{for any $t>0$}.
\end{align*}
Using $(F_3)$, one has
\begin{align*}
\frac{\frac{\mathrm{d}\mathcal{W}(t)}{\mathrm{d}t}}{\mathcal{W}(t)}>\frac{2\theta}{t},\quad \text{for any $t>0$}.
\end{align*}
Thus, integrating this over $[1,e^{\frac{\beta}{2}}]$, we get
\begin{align}\label{fff}
\int_{\mathbb{R}}(I_\mu*F(e^{\frac{\beta}{2}}u,e^{\frac{\beta}{2}}v))F(e^{\frac{\beta}{2}}u,e^{\frac{\beta}{2}}v)\mathrm{d}x\geq e^{\theta\beta}\int_{\mathbb{R}}(I_\mu*F(u,v))F(u,v)\mathrm{d}x.
\end{align}
Hence,
\begin{align*}
\mathcal{J}(\mathcal{H}((u,v),\beta))\leq \frac{1}{2}e^\beta\big( \|(-\Delta)^{1/4}u\|_2^2+ \|(-\Delta)^{1/4}v\|_2^2\big)- \frac{1}{2} e^{(\theta+\mu-2)\beta}\int_{\mathbb{R}}(I_\mu*F(u,v))F(u,v)\mathrm{d}x.
\end{align*}
Since $\theta>3-\mu$, the above inequality yields that $\mathcal{J}(\mathcal{H}((u,v),\beta))\to -\infty$ as $\beta\to+\infty$.
\end{proof}

\begin{lemma}\label{pa}
Assume that $(F_1)$-$(F_3)$ and $(F_6)$ hold. Then for any fixed $(u,v)\in \mathcal{S}$, the function $\mathcal{J}(\mathcal{H}((u,v),\beta))$ reaches its unique maximum with positive level at a unique point $\beta_{(u,v)}\in\mathbb{R}$ such that $\mathcal{H}((u,v),\beta_{(u,v)})\in\mathcal{P}(a,b)$. Moreover, the mapping $(u,v)\rightarrow \beta_{(u,v)}$ is continuous in $(u,v)\in \mathcal{S}$.
\end{lemma}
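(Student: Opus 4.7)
The plan is to reduce everything to a scalar analysis of $\Phi(\beta) := \mathcal{J}(\mathcal{H}((u,v),\beta))$ on $\mathbb{R}$. First, performing the change of variables $y = e^\beta x$ inside the Choquard double integral, one obtains the explicit expression
\begin{equation*}
\Phi(\beta) = \frac{e^\beta}{2} A - \frac{e^{(\mu-2)\beta}}{2} h(\beta),
\end{equation*}
with $A := \|(-\Delta)^{1/4}u\|_2^2 + \|(-\Delta)^{1/4}v\|_2^2$ and $h(\beta) := \int_{\mathbb{R}} (I_\mu * F(e^{\beta/2}(u,v))) F(e^{\beta/2}(u,v))\,\mathrm{d}x$. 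Writing $w := e^{\beta/2}(u,v)$ and using $\frac{d}{d\beta} F(w) = \frac{1}{2} w\cdot\nabla F(w)$ together with the identity $z\cdot\nabla F(z) - (2-\mu)F(z) = \widetilde F(z)$, a direct differentiation collapses to $\Phi'(\beta) = \tfrac{1}{2} P(\mathcal{H}((u,v),\beta))$, so critical points of $\Phi$ correspond exactly to points on $\mathcal{P}(a,b)$ along this fiber. Combined with Lemma \ref{mountain}, which gives $\Phi(\beta)\to 0^+$ as $\beta\to-\infty$ and $\Phi(\beta)\to-\infty$ as $\beta\to+\infty$, continuity then guarantees that $\Phi$ attains a positive global maximum at some $\beta_{(u,v)}\in\mathbb{R}$, automatically lying in $\mathcal{P}(a,b)$.

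For uniqueness I would rewrite $\Phi'(\beta)=0$ as $A = e^{(\mu-3)\beta} L(\beta)$, where $L(\beta) := \int_{\mathbb{R}} (I_\mu*F(w))\widetilde F(w)\,\mathrm{d}x$, and show that $\beta\mapsto e^{(\mu-3)\beta}L(\beta)$ is strictly increasing. Differentiating, using $w\cdot\nabla F(w) = \widetilde F(w)+(2-\mu)F(w)$ and the symmetry of $I_\mu$, reduces the required monotonicity to the positivity
\begin{equation*}
\int (I_\mu*\widetilde F(w))\widetilde F(w)\,\mathrm{d}x + \int (I_\mu*F(w))(w\cdot\nabla\widetilde F(w))\,\mathrm{d}x > (4-\mu) L(\beta),
\end{equation*}
which is exactly what one obtains by applying the pointwise inequality $(F_6)$ with $(\hat z,\tilde z) = (w(x),w(y))$, integrating against $|x-y|^{-\mu}$, and using the symmetry identification $\int(I_\mu*\widetilde F(w))F(w)\,\mathrm{d}x = L(\beta)$. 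Strict monotonicity then forces $\Phi'$ to vanish at a unique point, which must be $\beta_{(u,v)}$.

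Continuity of $(u,v)\mapsto\beta_{(u,v)}$ proceeds by first bounding $\beta_n := \beta_{(u_n,v_n)}$ for a convergent sequence $(u_n,v_n)\to(u,v)$ in $\mathcal{X}$. The bound $|F(z)|,|\widetilde F(z)| \leq C|z|^{\kappa+1}$ for small $|z|$ (extracted from $(F_1)$-$(F_2)$) combined with Hardy-Littlewood-Sobolev gives $L_n(\beta) = O(e^{(\kappa+1)\beta})$ uniformly in $n$ as $\beta\to-\infty$; since $\kappa+\mu-2>0$, this rules out $\beta_n\to-\infty$, because otherwise $A_n = e^{(\mu-3)\beta_n}L_n(\beta_n)\to 0$ would contradict $A_n\to A>0$. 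An upper bound follows from integrating $(F_3)$ along rays to obtain $F(tz)\geq t^\theta F(z)$ for $t\geq 1$, which yields $L_n(\beta)\geq (\theta+\mu-2)\,e^{\theta\beta}h_n(0)$ for $\beta\geq 0$; since $\theta+\mu-3>0$ and $h_n(0)\to h(0)>0$ (from $F>0$ off the origin and dominated convergence), $\beta_n\to+\infty$ would force $A_n\to\infty$, again impossible. Any subsequential limit $\beta_\infty$ then satisfies $P(\mathcal{H}((u,v),\beta_\infty))=0$, by Lemma \ref{m} for the kinetic part and a Lemma \ref{f}-style dominated convergence for the nonlinear part, hence equals $\beta_{(u,v)}$ by uniqueness, and the full sequence converges.

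The hardest step is the uniqueness: the bookkeeping in differentiating $e^{(\mu-3)\beta}L(\beta)$ and matching the result to $(F_6)$ is delicate, and $(F_6)$ appears to be tailored precisely so that its integrated version delivers strict monotonicity. The remaining ingredients—existence of the maximum, the two-sided bound on $\beta_n$, and the passage to the limit in $P$—are more routine given the growth assumptions and the lemmas already in place.
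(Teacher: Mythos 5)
Your proposal is correct and follows essentially the same route as the paper: existence of a positive maximizer from the geometry of Lemma \ref{mountain} together with the identity $\frac{\mathrm{d}}{\mathrm{d}\beta}\mathcal{J}(\mathcal{H}((u,v),\beta))=\tfrac12 P(\mathcal{H}((u,v),\beta))$; uniqueness from the integrated form of $(F_6)$ (your strict monotonicity of $\beta\mapsto e^{(\mu-3)\beta}L(\beta)$ is, after the symmetrization $\int_{\mathbb{R}}(I_\mu*\widetilde F(w))F(w)\,\mathrm{d}x=\int_{\mathbb{R}}(I_\mu*F(w))\widetilde F(w)\,\mathrm{d}x$, the same computation as the paper's proof that $\frac{\mathrm{d}^2}{\mathrm{d}\beta^2}\mathcal{J}(\mathcal{H}((u,v),\beta))<0$ at any critical point, and your $(4-\mu)L$ inequality is exactly what the integrated $(F_6)$ delivers); and continuity by a two-sided bound on $\beta_{(u_n,v_n)}$ followed by passing to the limit in $P$ and invoking uniqueness. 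The only cosmetic difference is how $\beta_{(u_n,v_n)}\to-\infty$ is excluded: the paper uses $\liminf_n\mathcal{J}(\mathcal{H}((u_n,v_n),\beta_{(u_n,v_n)}))\geq\mathcal{J}(\mathcal{H}((u,v),\beta_{(u,v)}))>0$ against the fact that this quantity would tend to $0$, whereas you use small-$\beta$ asymptotics of $L_n$; both work.
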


\begin{proof}
From Lemma $\ref{mountain}$, there exists $\beta_{(u,v)}\in\mathbb{R}$ such that $$P(\mathcal{H}((u,v),\beta_{(u,v)}))=\frac{1}{2}\frac{\mathrm{d}}{\mathrm{d}\beta}\mathcal{J}(\mathcal{H}((u,v),\beta))|_{\beta=\beta_{(u,v)}}=0$$ and $\mathcal{J}(\mathcal{H}((u,v),\beta_{(u,v)}))>0$. Next, we prove the uniqueness of $\beta_{(u,v)}$.
For $(u,v)\in \mathcal{S}$ and $\beta\in\mathbb{R}$, we know
\begin{align*}
\mathcal{J}(\mathcal{H}((u,v),\beta))=&\frac{1}{2}e^\beta\big( \|(-\Delta)^{1/4}u\|_2^2+ \|(-\Delta)^{1/4}v\|_2^2\big)\\
&-\frac{1}{2} e^{(\mu-2)\beta} \int_{\mathbb{R}}  (I_\mu*F(e^{\frac{\beta}{2}}u,e^{\frac{\beta}{2}}v))F(e^{\frac{\beta}{2}} u,e^{\frac{\beta}{2}}v)\mathrm{d}x.
\end{align*}
Then taking into account that $P(\mathcal{H}((u,v),\beta_{(u,v)}))=0$, using $(F_6)$, we have
\begin{align*}
&\frac{\mathrm{d}^2}{\mathrm{d}\beta^2}\mathcal{J}(\mathcal{H}((u,v),\beta))|_{\beta=\beta_{(u,v)}}
\\=&\frac{1}{2}e^\beta\big( \|(-\Delta)^{1/4}u\|_2^2+ \|(-\Delta)^{1/4}v\|_2^2\big)\\
&-\frac{(2-\mu)^2}{2}e^{(\mu-2){\beta_{(u,v)}}}\int_{\mathbb{R}}(I_\mu*F(e^{\frac{{\beta_{(u,v)}}}{2}}u,e^{\frac{{\beta_{(u,v)}}}{2}}v))F(e^{\frac{{\beta_{(u,v)}}}{2}}u,e^{\frac{{\beta_{(u,v)}}}{2}}v)\mathrm{d}x
\\&+(\frac{7}{4}-\mu)e^{{\beta_{(u,v)}}}\int_{\mathbb{R}}(I_\mu*F(e^{\frac{{\beta_{(u,v)}}}{2}}u,e^{\frac{{\beta_{(u,v)}}}{2}}v))\\&\times\bigg[\big(e^{\frac{{\beta_{(u,v)}}}{2}}u,e^{\frac{{\beta_{(u,v)}}}{2}}v\big)\cdot \Big(\frac{\partial F(e^{\frac{{\beta_{(u,v)}}}{2}}u,e^{\frac{{\beta_{(u,v)}}}{2}}v) }{\partial(e^{\frac{{\beta_{(u,v)}}}{2}}u)},\frac{\partial F(e^{\frac{{\beta_{(u,v)}}}{2}}u,e^{\frac{{\beta_{(u,v)}}}{2}}v) }{\partial(e^{\frac{{\beta_{(u,v)}}}{2}}v)} \Big)\bigg] \mathrm{d}x\\
&-\frac{1}{4}e^{(\mu-2){\beta_{(u,v)}}}\int_{\mathbb{R}}\bigg(I_\mu* \bigg[\big(e^{\frac{{\beta_{(u,v)}}}{2}}u,e^{\frac{{\beta_{(u,v)}}}{2}}v\big)\cdot \Big(\frac{\partial F(e^{\frac{{\beta_{(u,v)}}}{2}}u,e^{\frac{{\beta_{(u,v)}}}{2}}v) }{\partial(e^{\frac{{\beta_{(u,v)}}}{2}}u)},\frac{\partial F(e^{\frac{{\beta_{(u,v)}}}{2}}u,e^{\frac{{\beta_{(u,v)}}}{2}}v) }{\partial(e^{\frac{{\beta_{(u,v)}}}{2}}v)} \Big)\bigg]\bigg)\\&\times\bigg[\big(e^{\frac{{\beta_{(u,v)}}}{2}}u,e^{\frac{{\beta_{(u,v)}}}{2}}v\big )\cdot \Big(\frac{\partial F(e^{\frac{{\beta_{(u,v)}}}{2}}u,e^{\frac{{\beta_{(u,v)}}}{2}}v) }{\partial(e^{\frac{{\beta_{(u,v)}}}{2}}u)},\frac{\partial F(e^{\frac{{\beta_{(u,v)}}}{2}}u,e^{\frac{{\beta_{(u,v)}}}{2}}v) }{\partial(e^{\frac{{\beta_{(u,v)}}}{2}}v)} \Big)\bigg]\mathrm{d}x\\&
-\frac{1}{4}e^{(\mu-2){\beta_{(u,v)}}}\int_{\mathbb{R}}(I_\mu*F(e^{\frac{{\beta_{(u,v)}}}{2}}u,e^{\frac{{\beta_{(u,v)}}}{2}}v))\bigg[(e^{\frac{{\beta_{(u,v)}}}{2}}u)^2\frac{\partial^2 F(e^{\frac{{\beta_{(u,v)}}}{2}}u,e^{\frac{{\beta_{(u,v)}}}{2}}v)}{\partial (e^{\frac{{\beta_{(u,v)}}}{2}}u)^2}\\&+(e^{\frac{{\beta_{(u,v)}}}{2}}v)^2\frac{\partial^2 F(e^{\frac{{\beta_{(u,v)}}}{2}}u,e^{\frac{{\beta_{(u,v)}}}{2}}v)}{\partial (e^{\frac{{\beta_{(u,v)}}}{2}}v)^2}\bigg]\mathrm{d}x\\
=&-\frac{(2-\mu)(3-\mu)}{2}e^{(\mu-2){\beta_{(u,v)}}}\int_{\mathbb{R}}(I_\mu*F(e^{\frac{{\beta_{(u,v)}}}{2}}u,e^{\frac{{\beta_{(u,v)}}}{2}}v))F(e^{\frac{{\beta_{(u,v)}}}{2}}u,e^{\frac{{\beta_{(u,v)}}}{2}}v)\mathrm{d}x
\\&+(\frac{9}{4}-\mu)e^{{\beta_{(u,v)}}}\int_{\mathbb{R}}(I_\mu*F(e^{\frac{{\beta_{(u,v)}}}{2}}u,e^{\frac{{\beta_{(u,v)}}}{2}}v))\\&\times\bigg[\big(e^{\frac{{\beta_{(u,v)}}}{2}}u,e^{\frac{{\beta_{(u,v)}}}{2}}v\big)\cdot \Big(\frac{\partial F(e^{\frac{{\beta_{(u,v)}}}{2}}u,e^{\frac{{\beta_{(u,v)}}}{2}}v) }{\partial(e^{\frac{{\beta_{(u,v)}}}{2}}u)},\frac{\partial F(e^{\frac{{\beta_{(u,v)}}}{2}}u,e^{\frac{{\beta_{(u,v)}}}{2}}v) }{\partial(e^{\frac{{\beta_{(u,v)}}}{2}}v)} \Big)\bigg] \mathrm{d}x\\&-\frac{1}{4}e^{(\mu-2){\beta_{(u,v)}}}\int_{\mathbb{R}}\bigg(I_\mu* \bigg[\big(e^{\frac{{\beta_{(u,v)}}}{2}}u,e^{\frac{{\beta_{(u,v)}}}{2}}v\big)\cdot \Big(\frac{\partial F(e^{\frac{{\beta_{(u,v)}}}{2}}u,e^{\frac{{\beta_{(u,v)}}}{2}}v) }{\partial(e^{\frac{{\beta_{(u,v)}}}{2}}u)},\frac{\partial F(e^{\frac{{\beta_{(u,v)}}}{2}}u,e^{\frac{{\beta_{(u,v)}}}{2}}v) }{\partial(e^{\frac{{\beta_{(u,v)}}}{2}}v)} \Big)\bigg]\bigg)\\&\times\bigg[\big(e^{\frac{{\beta_{(u,v)}}}{2}}u,e^{\frac{{\beta_{(u,v)}}}{2}}v\big)\cdot \Big(\frac{\partial F(e^{\frac{{\beta_{(u,v)}}}{2}}u,e^{\frac{{\beta_{(u,v)}}}{2}}v) }{\partial(e^{\frac{{\beta_{(u,v)}}}{2}}u)},\frac{\partial F(e^{\frac{{\beta_{(u,v)}}}{2}}u,e^{\frac{{\beta_{(u,v)}}}{2}}v) }{\partial(e^{\frac{{\beta_{(u,v)}}}{2}}v)} \Big)\bigg]\mathrm{d}x\\&
-\frac{1}{4}e^{(\mu-2){\beta_{(u,v)}}}\int_{\mathbb{R}}(I_\mu*F(e^{\frac{{\beta_{(u,v)}}}{2}}u,e^{\frac{{\beta_{(u,v)}}}{2}}v))\bigg[(e^{\frac{{\beta_{(u,v)}}}{2}}u)^2\frac{\partial^2 F(e^{\frac{{\beta_{(u,v)}}}{2}}u,e^{\frac{{\beta_{(u,v)}}}{2}}v)}{\partial (e^{\frac{{\beta_{(u,v)}}}{2}}u)^2}\\&+(e^{\frac{{\beta_{(u,v)}}}{2}}v)^2\frac{\partial^2 F(e^{\frac{{\beta_{(u,v)}}}{2}}u,e^{\frac{{\beta_{(u,v)}}}{2}}v)}{\partial (e^{\frac{{\beta_{(u,v)}}}{2}}v)^2}\bigg]\mathrm{d}x\\
=:&\frac{1}{4}\int_{\mathbb{R}}\int_{\mathbb{R}}\frac{G}{|x-y|^\mu}\mathrm{d}x\mathrm{d}y<0,
\end{align*}
this prove the uniqueness of ${\beta_{(u,v)}}_{(u,v)}$,
where
\begin{align*}
  G=&(3-\mu)F(\mathcal{H}((u(y),v(y)),\beta_{(u(y),v(y))}))\widetilde{F}(\mathcal{H}((u(x),v(x)),\beta_{(u(x),v(x))}))\\&
  -F(\mathcal{H}((u(y),v(y)),\beta_{(u(y),v(y))}))\times\bigg[\mathcal{H}((u(x),v(x)),\beta_{(u(x),v(x))}) \\& \cdot\Big(\frac{\partial  \widetilde{F}(\mathcal{H}((u(x),v(x)),\beta_{(u(x),v(x))}))}{\partial (\mathcal{H}(u(x),\beta_{(u(x),v(x))}))},\frac{\partial \widetilde{F}(\mathcal{H}((u(x),v(x)),\beta_{(u(x),v(x))}))}{\partial (\mathcal{H}(v(x),\beta_{(u(x),v(x))}))}\Big)\bigg]\\&
  -\widetilde{F}(\mathcal{H}((u(y),v(y)),\beta_{(u(y),v(y))}))\\&\times[\widetilde{F}(\mathcal{H}((u(x),v(x)),\beta_{(u(x),v(x))}))-F(\mathcal{H}((u(x),v(x)),\beta_{(u(x),v(x))}))].
\end{align*}

From the above arguments, we know the mapping $(u,v)\rightarrow \beta_{(u,v)}$ is well defined. Let $\{(u_n,v_n)\}\subset \mathcal{S}$ be a sequence such that $(u_n,v_n)\to (u,v)\neq (0,0)$ in $\mathcal{X}$ as $n\to \infty$. We only need to prove that, up to a subsequence, $\beta_{(u_n,v_n)}\to \beta_{(u,v)}$ in $\mathbb{R}$ as $n\to \infty$.

On the one hand, if up to a subsequence, $\beta_{(u_n,v_n)}\to+\infty$ as $n\to\infty$, then by \eqref{fff} and $ (u,v)\neq (0,0)$, we have
\begin{align*}
0&\leq\lim_{n\to\infty}e^{-\beta_{(u_n,v_n)}}\mathcal{J}(\mathcal{H}((u_n,v_n),\beta_{(u_n,v_n)}))
\\&\leq\lim_{n\to\infty} \frac{1}{2}\Big[ \|(-\Delta)^{1/4}u_n\|_{2}^2+\|(-\Delta)^{1/4}v_n\|_{2}^2
-e^{(\theta+\mu-3)\beta_{(u_n,v_n)}} \int_{\mathbb{R}}  (I_\mu*F(u_n,v_n))F(u_n,v_n)\mathrm{d}x\Big]=-\infty,
\end{align*}
which is a contradiction. Hence, $\{\beta_{(u_n,v_n)}\}$ is bounded from above.

On the other hand, by Lemma $\ref{m}$,
we know $\mathcal{H}((u_n,v_n),\beta_{(u,v)})\to \mathcal{H}((u,v),\beta_{(u,v)})$ in $\mathcal{X}$ as $n\to\infty$. Then
\begin{align*}
\mathcal{J}(\mathcal{H}((u_n,v_n),\beta_{(u_n,v_n)}))\geq \mathcal{J}(\mathcal{H}((u_n,v_n),\beta_{(u,v)}))=\mathcal{J}(\mathcal{H}((u,v),\beta_{(u,v)}))+o_n(1),
\end{align*}
and thus
\begin{align*}
\liminf_{n\to\infty}\mathcal{J}(\mathcal{H}((u_n,v_n),\beta_{(u_n,v_n)}))\geq \mathcal{J}(\mathcal{H}((u,v),\beta_{(u,v)}))>0.
\end{align*}
If up to a subsequence, $\beta_{(u_n,v_n)}\to -\infty$ as $n\to\infty$, using $(F_3)$, we get
\begin{align*}
\mathcal{J}(\mathcal{H}((u_n,v_n),\beta_{(u_n,v_n)}))\leq \frac{1}{2}e^{\beta_{(u_n,v_n)}}\big(\|(-\Delta)^{1/4}u_n\|_2^2+\|(-\Delta)^{1/4}v_n\|_2^2\big)\to0, \quad \text{as $n\rightarrow\infty$},
\end{align*}
which is impossible. So we get $\beta_{(u_n,v_n)}$ is bounded from below. Up to a subsequence, we assume that $\beta_{(u_n,v_n)}\to \beta_0$ as $n\to\infty$. Since $(u_n,v_n)\to (u,v)$ in $\mathcal{X}$, then $\mathcal{H}((u_n,v_n),\beta_{(u_n,v_n)})\to\mathcal{H}((u,v),\beta_0)$ in $\mathcal{X}$ as $n\to\infty$. Moreover, by $P(\mathcal{H}((u_n,v_n),\beta_{(u_n,v_n)}))=0$, it follows that $P(\mathcal{H}((u,v),\beta_0))=0$. By the uniqueness of $\beta_{(u,v)}$, we get $\beta_{(u,v)}=\beta_0$ and the conclusion follows.
\end{proof}

\begin{lemma}\label{ous11}
Assume that $(F_1)$-$(F_3)$ hold, then there exists $\gamma>0$ small enough such that
\begin{align*}
\mathcal{J}(u,v)\geq \frac{1}{4}\big(\|(-\Delta)^{1/4}u\|_2^2+\|(-\Delta)^{1/4}v\|_2^2\big)
\quad \mbox{and}\quad
P(u,v)\geq\frac{1}{2}\big(\|(-\Delta)^{1/4}u\|_2^2+\|(-\Delta)^{1/4}v\|_2^2\big)
\end{align*}
for $(u,v)\in \mathcal{S}$ satisfying $\|(-\Delta)^{1/4}u\|_2^2+\|(-\Delta)^{1/4}v\|_2^2\leq \gamma$.
\end{lemma}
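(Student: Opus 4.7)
The plan is to reduce both inequalities in the lemma to a single fact: that the nonlinear energy integrals are dominated by a power of the kinetic norm strictly larger than $2$. Setting $\Lambda:=\|(-\Delta)^{1/4}u\|_2^2+\|(-\Delta)^{1/4}v\|_2^2$, the definitions of $\mathcal{J}$ and $P$ show that the two claims follow at once from
\begin{equation*}
\int_{\mathbb{R}}(I_\mu\ast F(u,v))F(u,v)\,\mathrm{d}x \leq \tfrac{1}{2}\Lambda
\quad\text{and}\quad
\int_{\mathbb{R}}(I_\mu\ast F(u,v))\widetilde{F}(u,v)\,\mathrm{d}x \leq \tfrac{1}{2}\Lambda,
\end{equation*}
whenever $\Lambda\leq\gamma$. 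I would therefore spend the entire proof estimating these two HLS-type integrals.

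First, by Proposition \ref{hardy} with the symmetric conjugate pair $(2/(2-\mu),2/(2-\mu))$ (which satisfies the condition $\tfrac{1}{r}+\tfrac{1}{t}+\mu=2$ in $\mathbb{R}$), both integrals are controlled by $\|F(u,v)\|_{2/(2-\mu)}\,\|G\|_{2/(2-\mu)}$ with $G\in\{F(u,v),\widetilde F(u,v)\}$. Since $(F_3)$ gives $0\leq\widetilde F(z)\leq z\cdot\nabla F(z)$ and $|z\cdot\nabla F(z)|\leq|z|\bigl(|F_{z_1}(z)|+|F_{z_2}(z)|\bigr)$, the growth bounds \eqref{Ft} (proved earlier from $(F_1)$-$(F_3)$) and the corresponding estimate on $F_{z_j}$ yield exactly the same pointwise control on $\widetilde F(z)$ as on $F(z)$, namely $\zeta|z|^{\kappa+1}+C|z|^q(e^{\alpha|z|^2}-1)$ for any preassigned $\zeta>0$, $q>1$, $\alpha>\pi$.

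The next step is to mimic the chain of inequalities in \eqref{tain1}, only with $\beta=0$ and $(u,v)\in\mathcal{S}$ in place of $\mathcal{H}((u,v),\beta)$. I would apply Lemma \ref{alge} with $k=2$ to split the exponential into independent factors in $u$ and $v$ via $p_1,p_2>1$ with $\tfrac{1}{p_1}+\tfrac{1}{p_2}=1$, then bring in Hölder with exponents $(\nu,\nu')$, $\nu>1$ close to $1$, and $\alpha>\pi$ close to $\pi$. For $\gamma>0$ sufficiently small, the condition $\Lambda\leq\gamma$ forces $\tfrac{2p_i\alpha\nu}{2-\mu}\|(-\Delta)^{1/4}w\|_2^2<\pi$ for $w\in\{u,v\}$, $i=1,2$, and Lemma \ref{tm} then makes the exponential integrals uniformly bounded. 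The outcome is
\begin{equation*}
\|F(u,v)\|_{2/(2-\mu)}^2 + \|\widetilde F(u,v)\|_{2/(2-\mu)}^2 \leq C\,\|(u,v)\|_{2(\kappa+1)/(2-\mu)}^{2(\kappa+1)}+C\,\|(u,v)\|_{2q\nu'/(2-\mu)}^{2q}.
\end{equation*}

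Finally, I would apply Lemma \ref{GN} with $N=1$, $s=\tfrac{1}{2}$ componentwise. Since $\|u\|_2=a$ and $\|v\|_2=b$ are fixed, each $\|w\|_r^r$ contributes only a harmless constant in $a$ or $b$ and a power $\|(-\Delta)^{1/4}w\|_2^{r-2}$. A short exponent computation (using $r=2(\kappa+1)/(2-\mu)$ in the first term and $r=2q\nu'/(2-\mu)$ in the second) shows that the resulting powers on the kinetic norms are $2(\kappa+\mu-1)$ and $2\bigl(q-(2-\mu)/\nu'\bigr)$ respectively. Both exceed $2$: the first because $\kappa>2-\mu$, the second by choosing $\nu'>(2-\mu)/(q-1)$, which is available as $q>1$. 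Consequently the total nonlinear contribution is bounded by $C(a,b)\,\Lambda^{1+\delta}$ for some $\delta>0$, and shrinking $\gamma$ so that $C(a,b)\gamma^\delta\leq 1/2$ finishes the proof of both inequalities at once. The main obstacle is purely bookkeeping: one has to carry the exponents through the HLS–Hölder–Adachi-Tanaka chain carefully enough to see that the subcritical condition $\kappa>2-\mu$ together with the freedom to enlarge $\nu'$ really does produce a power of $\Lambda$ strictly greater than $1$ on the nonlinear side.
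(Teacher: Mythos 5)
Your argument follows essentially the same route as the paper's proof: the Hardy--Littlewood--Sobolev inequality with the pair $\big(\tfrac{2}{2-\mu},\tfrac{2}{2-\mu}\big)$, the splitting Lemma \ref{alge} together with H\"older and the Adachi--Tanaka inequality exactly as in \eqref{tain1}, then the fractional Gagliardo--Nirenberg inequality with the fixed masses $a,b$ to produce powers of the kinetic norm strictly larger than $2$ (your exponents $2(\kappa+\mu-1)$ and $2\big(q-\tfrac{2-\mu}{\nu'}\big)$ coincide with the paper's), and finally smallness of $\gamma$. The only difference is that you spell out the bound on $\int_{\mathbb{R}}(I_\mu*F(u,v))\widetilde F(u,v)\,\mathrm{d}x$ needed for the $P$-inequality, via $0\le \widetilde F(z)\le z\cdot\nabla F(z)$ and the growth of $F_{z_j}$, a step the paper leaves implicit, so your write-up is if anything slightly more complete.
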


\begin{proof}
If $\gamma<\frac{2-\mu}{2}$, then $\frac{2}{2-\mu}\big(\|(-\Delta)^{1/4}u\|_2^2+\|(-\Delta)^{1/4}v\|_2^2\big)< 1$.
From \eqref{gns} and $(\ref{tain1})$, we obtain
\begin{align*}
&\int_{\mathbb{R}}\big(I_\mu*F(u,v)\big)F(u,v)\mathrm{d}x \\ \leq& C\Big(\|u\|_{\frac{2(\kappa+1)}{2-\mu}}^{\frac{2(\kappa+1)}{2-\mu}}+\|v\|_{\frac{2(\kappa+1)}{2-\mu}}^{\frac{2(\kappa+1)}{2-\mu}}\Big)^{2-\mu}+
C\Big(\|u\|_{\frac{2q\nu'}{2-\mu}}^{\frac{2q\nu'}{2-\mu}}+\|v\|_{\frac{2q\nu'}{2-\mu}}^{\frac{2q\nu'}{2-\mu}}\Big)^{\frac{2-\mu}{\nu'}}\\ \leq&
C\Big(a^2 \|(-\Delta)^{1/4}u\|_2^{\frac{2(\kappa+\mu-1)}{2-\mu}}+b^2 \|(-\Delta)^{1/4}v\|_2^{\frac{2(\kappa+\mu-1)}{2-\mu}}\Big)^{2-\mu}\\
&+ C\Big(a^2\|(-\Delta)^{1/4}u\|_2^{\frac{2(q\nu'+\mu-2)}{2-\mu}}+b^2\|(-\Delta)^{1/4}v\|_2^{\frac{2(q\nu'+\mu-2)}{2-\mu}}\Big)^{\frac{2-\mu}{\nu'}}
\\ \leq& C\big( \|(-\Delta)^{1/4}u\|_2^{2(\kappa+\mu-1)}+ \|(-\Delta)^{1/4}v\|_2^{2(\kappa+\mu-1)}\big)\\
&+ C\big(\|(-\Delta)^{1/4}u\|_2^{\frac{2(q\nu'+\mu-2)}{\nu'}}+\|(-\Delta)^{1/4}v\|_2^{\frac{2(q\nu'+\mu-2)}{\nu'}}\big)\\
\leq &  C\big( \gamma^{\kappa+\mu-2}+ \gamma^{q-1+\frac{\mu-2}{\nu'}}\big)\|(-\Delta)^{1/4}u\|_2^2+C\big( \gamma^{\kappa+\mu-2}+ \gamma^{q-1+\frac{\mu-2}{\nu'}}\big)\|(-\Delta)^{1/4}v\|_2^2.
\end{align*}
Since $\kappa>2-\mu$, $q>1$ and $\nu'=\frac{\nu}{\nu-1}$ large enough, choosing $0<\gamma<\frac{2-\mu}{2}$ small enough, we conclude the result.
\end{proof}

\begin{lemma}\label{ous1}
Assume that $(F_1)$-$(F_3)$ and $(F_6)$ hold, then we have
\begin{equation*}
\inf\limits_{(u,v)\in\mathcal{P}(a,b)}\big(\|(-\Delta)^{1/4}u\|_2+\|(-\Delta)^{1/4}v\|_2\big)>0\ \ \text{and} \ \ m(a,b)>0.
\end{equation*}
\end{lemma}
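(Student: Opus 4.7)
The plan is to deduce both inequalities directly from Lemma \ref{ous11} combined with condition $(F_3)$, using crucially that $P(u,v)=0$ on $\mathcal{P}(a,b)$.

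For the first assertion, fix an arbitrary $(u,v)\in\mathcal{P}(a,b)$ and abbreviate $A:=\|(-\Delta)^{1/4}u\|_2^2+\|(-\Delta)^{1/4}v\|_2^2$. First I would check that $A>0$: otherwise $u$ and $v$ would both be a.e.\ constant on $\mathbb{R}$, which together with $\|u\|_2=a>0$, $\|v\|_2=b>0$ contradicts the fact that nonzero constants do not lie in $L^2(\mathbb{R})$. Now suppose for contradiction that $A\leq \gamma$, with $\gamma>0$ the threshold supplied by Lemma \ref{ous11}. That lemma then gives $P(u,v)\geq A/2>0$, contradicting $(u,v)\in\mathcal{P}(a,b)$. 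Hence $A>\gamma$, and in particular
\begin{equation*}
\|(-\Delta)^{1/4}u\|_2+\|(-\Delta)^{1/4}v\|_2\geq\sqrt{A}>\sqrt{\gamma},
\end{equation*}
uniformly in $(u,v)\in\mathcal{P}(a,b)$, which is the first claim.

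For $m(a,b)>0$, I would exploit $(F_3)$ to compare the two Choquard integrals appearing in $\mathcal{J}$ and $P$. Since $z\cdot\nabla F(z)\geq\theta F(z)$ with $\theta>3-\mu$ and $F(z)>0$ for $z\neq(0,0)$, one has the pointwise bound $\widetilde{F}(z)\geq (\theta-2+\mu)F(z)$, with $\theta-2+\mu>1$. Using $P(u,v)=0$ this yields
\begin{equation*}
A=\int_{\mathbb{R}}(I_\mu*F(u,v))\widetilde{F}(u,v)\mathrm{d}x\geq(\theta-2+\mu)\int_{\mathbb{R}}(I_\mu*F(u,v))F(u,v)\mathrm{d}x,
\end{equation*}
and substituting into the definition of $\mathcal{J}$ gives
\begin{equation*}
\mathcal{J}(u,v)=\frac{A}{2}-\frac{1}{2}\int_{\mathbb{R}}(I_\mu*F(u,v))F(u,v)\mathrm{d}x\geq\frac{A\,(\theta-3+\mu)}{2(\theta-2+\mu)}.
\end{equation*}
Combining with the lower bound $A>\gamma$ from the previous paragraph and $\theta>3-\mu$, the right-hand side is bounded below by the strictly positive constant $\gamma(\theta-3+\mu)/(2(\theta-2+\mu))$, uniformly on $\mathcal{P}(a,b)$; taking the infimum gives $m(a,b)>0$.

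There is no real obstacle beyond ensuring $\mathcal{P}(a,b)\neq\emptyset$ so that the infima are meaningful, which is exactly the content of Lemma \ref{pa}, already proved above. The whole proof is thus essentially a one-page consequence of the Poho\v{z}aev identity together with the quantitative small-energy estimate in Lemma \ref{ous11}, with $(F_3)$ used only in the second part to compare $\widetilde{F}$ with $F$.
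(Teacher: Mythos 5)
Your proof is correct, and for the second assertion it follows a genuinely different route from the paper. For the first assertion both arguments rest on Lemma \ref{ous11}: the paper argues by contradiction with a sequence $\{(u_n,v_n)\}\subset\mathcal{P}(a,b)$ whose seminorms vanish, whereas you obtain directly the quantitative dichotomy that any $(u,v)\in\mathcal{P}(a,b)$ must satisfy $\|(-\Delta)^{1/4}u\|_2^2+\|(-\Delta)^{1/4}v\|_2^2>\gamma$ (your separate treatment of the degenerate case $A=0$ via constancy and $L^2(\mathbb{R})$ is the right way to rule it out). The real divergence is in proving $m(a,b)>0$: the paper invokes Lemma \ref{pa}, namely that each $(u,v)\in\mathcal{P}(a,b)$ maximizes $\mathcal{J}$ along its dilation fiber $\beta\mapsto\mathcal{H}((u,v),\beta)$, then rescales so that the seminorm sum equals $\gamma$ and applies the first inequality of Lemma \ref{ous11} to get $\mathcal{J}(u,v)\geq\gamma/4$; you instead combine the constraint $P(u,v)=0$ with the Ambrosetti--Rabinowitz-type condition $(F_3)$, which gives $\widetilde F\geq(\theta-2+\mu)F$ pointwise and hence $\int_{\mathbb{R}}(I_\mu*F(u,v))F(u,v)\,\mathrm{d}x\leq A/(\theta-2+\mu)$, so that $\mathcal{J}(u,v)\geq\frac{\theta-3+\mu}{2(\theta-2+\mu)}A>\frac{(\theta-3+\mu)\gamma}{2(\theta-2+\mu)}$. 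Your route is more elementary and quantitative: it uses $(F_6)$ only through the nonemptiness of $\mathcal{P}(a,b)$ (Lemma \ref{pa}), avoids the fiber-maximality property entirely, and yields explicit lower bounds, while the paper's rescaling argument has the advantage of not needing the uniform seminorm bound from the first step. Both are valid proofs of the lemma.
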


\begin{proof}
By Lemma $\ref{pa}$, we know $\mathcal{P}(a,b)$ is nonempty. Supposed that there exists a sequence $\{(u_n,v_n)\}\subset \mathcal{P}(a,b)$ such that $\|(-\Delta)^{1/4}u_n\|_2+\|(-\Delta)^{1/4}v_n\|_2\to0$ as $n\to\infty$,
then by Lemma $\ref{ous11}$, up to subsequence,
\begin{align*}
0=P(u_n,v_n)\geq\frac{1}{2} \big(\|(-\Delta)^{1/4}u_n\|_2^2+\|(-\Delta)^{1/4}v_n\|_2^2\big)\geq0,
\end{align*}
which implies that $\|(-\Delta)^{1/4}u_n\|_2^2=\|(-\Delta)^{1/4}v_n\|_2^2=0$ for any $n\in\mathbb{N}^+$.
By $(F_3)$ and $P(u_n,v_n)=0$, we have
\begin{align*}
0=&(2-\mu)\int_{\mathbb{R}}(I_\mu*F(u_n,v_n))F(u_n,v_n)\mathrm{d}x-\int_{\mathbb{R}}(I_\mu*F(u_n,v_n))\big[(u_n,v_n)\cdot \nabla F(u_n,v_n)\big]\mathrm{d}x\\
\leq& \big(\frac{2-\mu}{\theta}-1\big) \int_{\mathbb{R}}(I_\mu*F(u_n,v_n))\big[(u_n,v_n)\cdot \nabla F(u_n,v_n)\big]\mathrm{d}x\leq0.
\end{align*}
So $u_n,v_n\to0$ a.e. in $\mathbb{R}$, which contradicts  $a,b>0$.

From Lemma $\ref{pa}$, we know that for any $(u,v)\in\mathcal{P}(a,b)$,
\begin{align*}
\mathcal{J}(u,v)=\mathcal{J}(\mathcal{H}((u,v),0))\geq \mathcal{J}(\mathcal{H}((u,v),\beta)),\quad \text{for any $\beta\in\mathbb{R}$} .
\end{align*}
 Let $\gamma>0$ be the number given by Lemma $\ref{ous11}$ and $e^{\beta}=\frac{\gamma}{ \|(-\Delta)^{1/4}u\|_2^2+\|(-\Delta)^{1/4}v\|_2^2}$, then
 $$ \|(-\Delta)^{1/4}\mathcal{H}(u,\beta)\|_2^2+\|(-\Delta)^{1/4}\mathcal{H}(v,\beta)\|_2^2=\gamma.$$ Applying Lemma $\ref{ous11}$ again, we deduce that
\begin{align*}
\mathcal{J}(u,v)\geq \mathcal{J}(\mathcal{H}((u,v),\beta))\geq \frac{1}{4}\big(\|(-\Delta)^{1/4}\mathcal{H}(u,\beta)\|_2^2+\|(-\Delta)^{1/4}\mathcal{H}(v,\beta)\|_2^2\big)
=\frac{\gamma}{4}>0.
\end{align*}
This completes the proof.
\end{proof}

In order to estimate the upper bound of $m(a,b)$, let us consider the following sequence of nonnegative functions (see \cite{Taka}) supported in $B_1(0)$ given by
  \begin{align*}
\varpi_n(x)=\frac{1}{\sqrt{\pi}}
	 \begin{cases}
  \sqrt{\log n},\ \  &\mbox{for $|x|<\frac{1}{n}$,}\\
 \frac{\log{\frac{1}{|x|}}}{\sqrt{\log n}},\ \  &\mbox{for $\frac{1}{n}\leq |x|\leq 1$,}\\
  0,\ \  &\mbox{for $|x|> 1$.}
      \end{cases}
 \end{align*}
One can check that $\varpi_n\in H^{1/2}(\mathbb{R})$. A direct calculation shows that
\begin{align*}
\|(-\Delta)^{1/4}\varpi_n\|_2^2=1+o(1),
\end{align*}
\begin{align*}
\delta_n:=\|\varpi_n\|_2^2=&\int_{-\frac{1}{n}}^{\frac{1}{n}}\frac{\log n}{\pi}\mathrm{d}x+
\int_{-1}^{-\frac{1}{n}}\frac{(\log|x|)^2}{\pi\log n}\mathrm{d}x+
\int_{\frac{1}{n}}^1\frac{(\log|x|)^2}{\pi\log n}\mathrm{d}x\\
=&\frac{4}{\pi}(\frac{1}{\log n}-\frac{1}{n\log n }-\frac{1}{n})=\frac{4}{\pi\log n}+o(\frac{1}{\log n}).
\end{align*}
For any $c>0$, let $\omega_n^c:=\frac{c\varpi_n}{\|\varpi_n\|_2}$, then $\omega_n^c \in S(c)$ and
\begin{align}\label{wnx}
\omega_n^c(x)=\frac{c}{2}
	 \begin{cases}
  \log n (1+o(1)),\ \  &\mbox{for $|x|<\frac{1}{n}$,}\\
 \log{\frac{1}{|x|}}(1+o(1)),\ \  &\mbox{for $\frac{1}{n}\leq |x|\leq 1$,} \\
  0,\ \  &\mbox{for $|x|> 1$.}
      \end{cases}
\end{align}
Furthermore, we have
\begin{align}\label{wn}
\|(-\Delta)^{1/4}\omega_n^c\|_2^2&=\int_{\mathbb{R}}\int_{\mathbb{R}}\frac{\big|\frac{c}{\sqrt{\delta_n}}\varpi_n(x)-\frac{c}{\sqrt{\delta_n}}\varpi_n(y)\big|^2}{|x-y|^2}\mathrm{d}x\mathrm{d}y\nonumber\\
&=\frac{c^2}{\delta_n}\|(-\Delta)^{1/4}\varpi_n\|_2^2=\frac{\pi c^2\log n}{4}(1+o(1)).
\end{align}
For any $t>0$, let
\begin{align*}
\Phi_n(t):=&\mathcal{J}(t\omega_n^a(t^{2}x),t\omega_n^b(t^{2}x))=\frac{t^2}{2}\big(\|(-\Delta)^{1/4}\omega_n^a\|_2^2+\|(-\Delta)^{1/4}\omega_n^b\|_2^2\big) \nonumber\\
&-\frac{1}{2}t^{2(\mu-2)}\int_{\mathbb{R}}(I_\mu*F(t\omega_n^a,t\omega_n^b))F(t\omega_n^a,t\omega_n^b)\mathrm{d}x.
\end{align*}
From Lemmas {\ref{pa}} and $\ref{ous1}$, we infer that $m(a,b)=\inf\limits_{(u,v)\in \mathcal{S}}\max\limits_{\beta\in\mathbb{R}}\mathcal{J}(\mathcal{H}((u,v),\beta))>0$, this together with $(\omega_n^a,\omega_n^b)\in \mathcal{S}$ yields that
\begin{align*}
m(a,b)\leq \max_{\beta\in\mathbb{R}}\mathcal{J}(\mathcal{H}((\omega_n^a,\omega_n^b),\beta))=\max_{t>0}\Phi_n(t).
\end{align*}

\begin{lemma}\label{attain}
Assume that $(F_1)$-$(F_3)$ hold, then for any fixed $n\in \mathbb{N}^+$, $\max\limits_{t\geq0}\Phi_n(t)>0$ is attained at some $t_n>0$.
\end{lemma}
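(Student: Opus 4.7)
My first step is to recognize that $\Phi_n$ is nothing but the restriction of the function $\beta\mapsto\mathcal J(\mathcal H((\omega_n^a,\omega_n^b),\beta))$ to the substitution $\beta=2\log t$. Indeed, $\mathcal H(u,\beta)(x)=e^{\beta/2}u(e^{\beta}x)$, so setting $\beta=2\log t$ (for $t>0$) yields $\mathcal H(u,2\log t)(x)=tu(t^{2}x)$, and consequently
\[
\Phi_n(t)=\mathcal J\big(\mathcal H((\omega_n^a,\omega_n^b),2\log t)\big),\qquad t>0.
\]
Since $(\omega_n^a,\omega_n^b)\in\mathcal S$ by construction, Lemma \ref{mountain} (whose hypotheses are exactly $(F_1)$--$(F_3)$) applies and gives the two boundary behaviors
$\Phi_n(t)\to 0^{+}$ as $t\to 0^{+}$ and $\Phi_n(t)\to-\infty$ as $t\to+\infty$.

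Next I would check that $\Phi_n$ is continuous on $(0,\infty)$. The map $t\mapsto\mathcal H((\omega_n^a,\omega_n^b),2\log t)$ is continuous from $(0,\infty)$ into $\mathcal X$ by Lemma \ref{m}, and the functional $\mathcal J\in C^{1}(\mathcal X,\mathbb R)$ was already shown to be continuous using the Hardy--Littlewood--Sobolev inequality (\ref{HLS}), the pointwise bound (\ref{Ft}), and the Trudinger--Moser inequality (\ref{att}); composition then gives the continuity of $\Phi_n$.

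Finally, I would extract a maximizer by a straightforward compactness argument. The limit $\Phi_n(t)\to 0^{+}$ at $t\to 0^{+}$ produces points where $\Phi_n$ is strictly positive, so $M:=\sup_{t>0}\Phi_n(t)>0$; combined with $\Phi_n(t)\to-\infty$ at infinity and the continuity on compact subintervals of $(0,\infty)$, this $M$ is finite. Choose a maximizing sequence $\{t_k\}\subset(0,\infty)$. The $+\infty$ limit forbids $t_k\to+\infty$, and the $0^{+}$ limit, being strictly below $M$, forbids $t_k\to 0^{+}$; hence $\{t_k\}$ is confined to a compact subset of $(0,\infty)$. Passing to a convergent subsequence $t_k\to t_n>0$ and invoking continuity gives $\Phi_n(t_n)=M>0$. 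The only point requiring any care in the whole argument is the two-sided trapping of the maximizing sequence, which is why the strict positivity of the $t\to 0^{+}$ limit (not merely nonnegativity) is essential; everything else is a routine continuity--compactness exercise on top of Lemma \ref{mountain} and Lemma \ref{m}.
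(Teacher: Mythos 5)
Your proposal is correct. The substitution is right ($\mathcal{H}(u,2\log t)(x)=t\,u(t^{2}x)$, so $\Phi_n(t)=\mathcal{J}(\mathcal{H}((\omega_n^a,\omega_n^b),2\log t))$, which the paper itself uses implicitly when it writes $\max_{\beta\in\mathbb{R}}\mathcal{J}(\mathcal{H}((\omega_n^a,\omega_n^b),\beta))=\max_{t>0}\Phi_n(t)$), and since $(\omega_n^a,\omega_n^b)\in\mathcal{S}$ and Lemma \ref{mountain} only needs $(F_1)$--$(F_3)$, its two limits transfer to $\Phi_n(t)\to0^+$ as $t\to0^+$ and $\Phi_n(t)\to-\infty$ as $t\to+\infty$; continuity via Lemma \ref{m} and $\mathcal{J}\in C^1$ plus the trapping of a maximizing sequence then give the maximizer $t_n>0$ with positive value. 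The difference from the paper is one of routing rather than substance: the paper does not invoke Lemma \ref{mountain} but re-derives the two asymptotic behaviors directly in the variable $t$, bounding the nonlocal term for small $t$ by the $(\ref{tain1})$-type estimate (HLS, \eqref{Ft}, \eqref{att}), which yields the explicit rates $t^{2(\kappa+\mu-1)}$ and $t^{2(q+\mu-2)+\frac{2(2-\mu)}{\nu}}$ beating the $t^{2}$ kinetic term, and for large $t$ using \eqref{fff} with $\theta>3-\mu$ to get the rate $t^{2(\theta+\mu-2)}$; it then concludes attainment without spelling out the continuity--compactness step that you make explicit. Your version avoids duplicating estimates already proved in Lemma \ref{mountain} and is cleaner on the attainment step; the paper's version buys explicit power-law bounds in $t$, which, however, are not needed later (Lemma \ref{contr} only uses $\frac{d}{dt}\Phi_n(t)|_{t=t_n}=0$). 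You are also right that the strict sign in the limit $0^{+}$, guaranteed by Lemma \ref{mountain}(i), is exactly what rules out a maximizing sequence drifting to $0$.
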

\begin{proof}
For any fixed $p_1,p_2>1$ satisfying $\frac{1}{p_1}+\frac{1}{p_2}=1$, as $t>0$ small enough, one can fix $\alpha>\pi$ close to $\pi$ and $\nu>1$ close to $1$ such that
\begin{equation*}
  \frac{2p_1\alpha \nu}{2-\mu}\|(-\Delta)^{1/4}(t\omega_n^a)\|_2^2<\pi,\quad  \frac{2p_2\alpha \nu}{2-\mu}\|(-\Delta)^{1/4}(t\omega_n^b)\|_2^2<\pi.
\end{equation*}
Arguing as $(\ref{tain1})$, by \eqref{att}, for $\nu'=\frac{\nu}{\nu-1}$, we have
\begin{align*}
&t^{2(\mu-2)}\int_{\mathbb{R}}(I_\mu*F(t\omega_n^a,t\omega_n^b))F(t\omega_n^a,t\omega_n^b)\mathrm{d}x \\ \leq& Ct^{2(\mu-2)}\Big(\|t\omega_n^a\|_{\frac{2(\kappa+1)}{2-\mu}}^{\frac{2(\kappa+1)}{2-\mu}}+\|t\omega_n^b\|_{\frac{2(\kappa+1)}{2-\mu}}^{\frac{2(\kappa+1)}{2-\mu}}\Big)^{2-\mu}+
Ct^{2(\mu-2)}t^{\frac{2(2-\mu)}{\nu}}\Big(\|t\omega_n^a\|_{\frac{2q\nu'}{2-\mu}}^{\frac{2q\nu'}{2-\mu}}+\|t\omega_n^b\|_{\frac{2q\nu'}{2-\mu}}^{\frac{2q\nu'}{2-\mu}}\Big)^{\frac{2-\mu}{\nu'}}
\\=&Ct^{2(\kappa+\mu-1)}\Big(\|\omega_n^a\|_{\frac{2(\kappa+1)}{2-\mu}}^{\frac{2(\kappa+1)}{2-\mu}}+\|\omega_n^b\|_{\frac{2(\kappa+1)}{2-\mu}}^{\frac{2(\kappa+1)}{2-\mu}}\Big)^{2-\mu}+Ct^{2(q+\mu-2)+\frac{2(2-\mu)}{\nu}}
\Big(\|\omega_n^a\|_{\frac{2q\nu'}{2-\mu}}^{\frac{2q\nu'}{2-\mu}}+\|\omega_n^b\|_{\frac{2q\nu'}{2-\mu}}^{\frac{2q\nu'}{2-\mu}}\Big)^{\frac{2-\mu}{\nu'}}.
\end{align*}
Since $\kappa>2-\mu$, $q>1$, and $\nu$ close to $1$, we have $\Phi_n(t)>0$ for $t>0$ small enough. For $t>0$ large, by \eqref{fff}, we obtain
\begin{align*}
t^{2(\mu-2)}\int_{\mathbb{R}}(I_\mu*F(t\omega_n^a,t\omega_n^b))F(t\omega_n^a,t\omega_n^b)\mathrm{d}x \geq
t^{2(\theta+\mu-2)}\int_{\mathbb{R}}(I_\mu*F(\omega_n^a,\omega_n^b))F(\omega_n^a,\omega_n^b)\mathrm{d}x.
\end{align*}
Since $\theta>3-\mu$, we obtain $\Phi_n(t)<0$ for $t>0$ large enough.
Thus $\max\limits_{t\geq0}\Phi_n(t)>0$ is attained at some $t_n>0$.
\end{proof}

\begin{lemma}\label{contr}
Assume that $(F_1)$-$(F_3)$ and $(F_7)$ hold, then there exists $n\in \mathbb{N}^+$ large such that
\begin{align*}
\max_{t\geq0}\Phi_n(t)<\frac{2-\mu}{4}.
\end{align*}
\end{lemma}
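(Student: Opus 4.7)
The plan is to argue by contradiction. Suppose $\max_{t\geq 0}\Phi_n(t)\geq(2-\mu)/4$ for every (large) $n\in\mathbb N^+$, and let $t_n>0$ be a maximizer provided by Lemma \ref{attain}. Write $z_n:=(t_n\omega_n^a,t_n\omega_n^b)$,
\begin{equation*}
A_n:=\|(-\Delta)^{1/4}\omega_n^a\|_2^2+\|(-\Delta)^{1/4}\omega_n^b\|_2^2,\qquad B_n:=\int_{\mathbb R}(I_\mu*F(z_n))F(z_n)\,\mathrm dx,
\end{equation*}
and introduce the rescaled parameter $s_n:=t_n^2\log n$. The strategy is to squeeze $s_n$ between two constants that turn out to be incompatible. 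A preliminary step is to derive the Pohoz\u aev-type identity implied by $\Phi_n'(t_n)=0$: since the curve $t\mapsto(t\omega_n^a(t^2\cdot),t\omega_n^b(t^2\cdot))$ is exactly the $\mathcal H$-flow with $t^2=e^\beta$, differentiating and invoking the scaling formulas for $\mathcal J$ produces
\begin{equation*}
t_n^{6-2\mu}A_n\ =\ \int_{\mathbb R}(I_\mu*F(z_n))\widetilde F(z_n)\,\mathrm dx,
\end{equation*}
while \eqref{wn} gives $A_n=\frac{\pi(a^2+b^2)\log n}{4}(1+o(1))$.

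First I would extract a lower bound on $s_n$ from $(F_4)$. Summing $F_{z_j}(z)z_j<(2-\mu)F(z)$ over $j=1,2$ yields $\widetilde F(z)<(2-\mu)F(z)$, and combining with the Pohoz\u aev identity gives $t_n^{2(\mu-2)}B_n>t_n^2 A_n/(2-\mu)$. Inserting into $\Phi_n(t_n)=\tfrac12(t_n^2A_n-t_n^{2(\mu-2)}B_n)$ furnishes the strict estimate $\Phi_n(t_n)<\frac{(1-\mu)\,t_n^2A_n}{2(2-\mu)}$, and the standing hypothesis $\Phi_n(t_n)\geq(2-\mu)/4$ then forces $t_n^2 A_n\geq\frac{(2-\mu)^2}{2(1-\mu)}$, equivalently
\begin{equation*}
s_n\ \geq\ \sigma_L+o(1),\qquad\sigma_L\ :=\ \frac{2(2-\mu)^2}{\pi(1-\mu)(a^2+b^2)}.
\end{equation*}

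Next I would extract the opposite bound $\limsup_n s_n\leq\sigma_U$ from $(F_7)$. Combining $(F_7)$, $F(z)(z\cdot\nabla F(z))\geq\beta_0 e^{2\pi|z|^2}$, with $(F_4)$ in the form $z\cdot\nabla F(z)<2(2-\mu)F(z)$, yields $F(z)^2>\frac{\beta_0}{2(2-\mu)}e^{2\pi|z|^2}$, whence a pointwise lower bound $F(z)\geq c_0 e^{\pi|z|^2}$ with some explicit $c_0>0$ valid whenever both $|z_1|$ and $|z_2|$ are large. Because the previous step ensures $s_n\geq\sigma_L>0$, \eqref{wnx} shows that both $t_n\omega_n^a(x)$ and $t_n\omega_n^b(x)$ tend to $+\infty$ uniformly on $B_{1/n}(0)$, so this pointwise bound is genuinely applicable there. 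Using $|z_n(x)|^2\approx t_n^2(a^2+b^2)(\log n)^2/4$, the elementary estimate $|x-y|^{-\mu}\geq(n/2)^\mu$ on $B_{1/n}(0)\times B_{1/n}(0)$, and $\mathrm{meas}(B_{1/n}(0)^2)=4/n^2$, one obtains
\begin{equation*}
B_n\ \geq\ c_1\,n^{\mu-2}\exp\!\Big(\tfrac{\pi t_n^2(a^2+b^2)(\log n)^2}{2}\Big)\ =\ c_1\,n^{\pi(a^2+b^2)(s_n-\sigma_U)/2},\qquad\sigma_U\ :=\ \frac{2(2-\mu)}{\pi(a^2+b^2)}.
\end{equation*}
Feeding this back into $\Phi_n(t_n)$ and noting that $t_n^2A_n$ and $t_n^{2(\mu-2)}$ are only polylogarithmic in $n$, one sees that $\limsup_n s_n>\sigma_U$ would force the subtracted term to dominate the leading one by a positive power of $n$ along a subsequence, sending $\Phi_n(t_n)\to-\infty$ and contradicting the standing hypothesis; hence $\limsup_n s_n\leq\sigma_U$.

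A direct computation gives $\sigma_L/\sigma_U=(2-\mu)/(1-\mu)>1$ for $\mu\in(0,1)$, so the two conclusions $s_n\geq\sigma_L$ and $s_n\leq\sigma_U+o(1)$ cannot coexist for $n$ large, which is the desired contradiction. The main obstacle, I anticipate, will lie in the third paragraph: one has to verify that the quantitative positivity $s_n\geq\sigma_L>0$ from the second paragraph is strong enough to place $B_{1/n}(0)$ into the regime where $(F_7)$ is genuinely usable, and then balance the polynomial-in-$n$ contribution of the concentrated exponential integral against the polylog prefactors $A_n$ and $(\log n)^{2-\mu}$ finely enough to recover the sharp threshold $\sigma_U$.
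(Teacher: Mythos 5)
Your route is genuinely different from the paper's, and as a piece of mathematics the squeeze $\sigma_L\le\liminf s_n\le\limsup s_n\le\sigma_U<\sigma_L$ does close, but it has one substantive problem: both of its pillars lean on $(F_4)$, which is not among the hypotheses of the lemma (only $(F_1)$--$(F_3)$ and $(F_7)$ are assumed). You need $(F_4)$ once to get $\widetilde F(z)\le(2-\mu)F(z)$, which is what turns the standing hypothesis $\Phi_n(t_n)\ge\frac{2-\mu}{4}$ into the strict lower bound $s_n\ge\sigma_L+o(1)$ with $\sigma_L=\frac{2(2-\mu)^2}{\pi(1-\mu)(a^2+b^2)}$, and you need it again to convert $(F_7)$ into the pointwise bound $F(z)\ge c_0e^{\pi|z|^2}$, without which you cannot bound $B_n=\int_{\mathbb R}(I_\mu*F(z_n))F(z_n)\,\mathrm dx$ from below. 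Using only the stated hypotheses, the best analogue of your first step is $\frac{2-\mu}{4}\le\Phi_n(t_n)\le\frac{t_n^2A_n}{2}$, which gives only $\liminf s_n\ge\sigma_U$; your two bounds then meet exactly at the threshold $\sigma_U=\frac{2(2-\mu)}{\pi(a^2+b^2)}$ and no contradiction follows -- this borderline case is precisely the delicate point, so under the lemma's own hypotheses your scheme degenerates rather than closes.

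The paper avoids $(F_4)$ altogether: it works not with the energy term $B_n$ but with the term $\int_{\mathbb R}(I_\mu*F(z_n))[z_n\cdot\nabla F(z_n)]\,\mathrm dx$ furnished by the critical-point identity $\Phi_n'(t_n)=0$ together with $(F_3)$; since $z_n$ is constant on $B_{1/n}(0)$, $(F_7)$ applies to this product directly, with no need for a pointwise lower bound on $F$ alone. It then runs a direct three-case analysis on $l=\lim t_n^2\log n$: if $l=0$ then $\Phi_n(t_n)\to0$ and the conclusion holds outright; if $l<\frac{2(2-\mu)}{\pi(a^2+b^2)}$ the kinetic bound gives $\limsup\Phi_n(t_n)\le\frac{\pi(a^2+b^2)l}{8}<\frac{2-\mu}{4}$; and the borderline $l=\frac{2(2-\mu)}{\pi(a^2+b^2)}$ is excluded by the expansion $n^{Q_n}\ge1$ with $Q_n\to0^+$. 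So the comparison is: your argument, read inside Theorem \ref{thm1.1} where $(F_1)$--$(F_7)$ all hold, is a valid and arguably cleaner alternative, because the strict gap $\sigma_L/\sigma_U=\frac{2-\mu}{1-\mu}>1$ created by $(F_4)$ lets you bypass the borderline analysis entirely; but as a proof of the lemma as stated it has a genuine gap, since the extra hypothesis is doing essential work in exactly the steps you flag as delicate. (A smaller point: the $(1+o(1))$ factors you suppress sit inside exponents multiplied by $\log n$, so in the case $s_n\to\infty$ the claim that $t_n^2A_n$ and $t_n^{2(\mu-2)}$ are ``polylogarithmic'' is not literally true; the domination still holds, but it should be argued, not asserted.)
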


\begin{proof}
First, we have the following estimation
\begin{align*}
  \int_{-\frac{1}{n}}^{\frac{1}{n}}\int_{-\frac{1}{n}}^{\frac{1}{n}}\frac{\mathrm{d}x\mathrm{d}y}{|x-y|^\mu}=\frac{2^{3-\mu}}{(1-\mu)(2-\mu)}(\frac{1}{n})^{2-\mu}=:C_\mu (\frac{1}{n})^{2-\mu}.
\end{align*}
By Lemma $\ref{attain}$, we know $\max\limits_{t\geq0}\Phi_n(t)$ is attained at some $t_n>0$. So $t_n$ satisfies
\begin{align*}
\frac{d}{dt}\Phi_n(t)\Big|_{t=t_n}=0.
\end{align*}
By $(F_3)$, we have
\begin{align}\label{twn}
&t_n^2\big(\|(-\Delta)^{1/4}\omega_n^a\|_2^2+\|(-\Delta)^{1/4}\omega_n^b\|_2^2\big)\nonumber\\=&(\mu-2)t_n^{2(\mu-2)}\int_{\mathbb{R}}(I_\mu*F(t_n\omega_n^a,t_n\omega_n^b))F(t_n\omega_n^a,t_n\omega_n^b)\mathrm{d}x\nonumber\\
&+t_n^{2(\mu-2)}\int_{\mathbb{R}}(I_\mu*F(t_n\omega_n^a,t_n\omega_n^b))\Big(\frac{\partial F(t_n\omega_n^a,t_n\omega_n^b)}{\partial (t_n\omega_n^a)}t_n\omega_n^a+\frac{\partial F(t_n\omega_n^a,t_n\omega_n^b)}{\partial (t_n\omega_n^b)}t_n\omega_n^b\Big)\mathrm{d}x\nonumber\\
 \geq& \frac{\theta+\mu-2}{\theta}t_n^{2(\mu-2)}\int_{\mathbb{R}}(I_\mu*F(t_n\omega_n^a,t_n\omega_n^b))\Big(\frac{\partial F(t_n\omega_n^a,t_n\omega_n^b)}{\partial (t_n\omega_n^a)}t_n\omega_n^a+\frac{\partial F(t_n\omega_n^a,t_n\omega_n^b)}{\partial (t_n\omega_n^b)}t_n\omega_n^b\Big)\mathrm{d}x.
\end{align}

By $(F_7)$, for any $\varepsilon>0$. there exists $R_\varepsilon>0$ such that for any $|z_1|,|z_2|\geq R_\varepsilon$,
\begin{equation}\label{ftF}
 F(z)[z\cdot \nabla F(z)]\geq (\beta_0-\varepsilon)e^{2\pi |z|^2}.
\end{equation}

$\bf{Case\ 1}$.
 If $\lim\limits_{n\to\infty}t_n^2\log n=0$, then $\lim\limits_{n\to\infty}t_n=0$. By $(\ref{wn})$, we have
 \begin{equation*}
 \frac{t_n^2}{2}\big(\|(-\Delta)^{1/4}\omega_n^a\|_2^2+\|(-\Delta)^{1/4}\omega_n^b\|_2^2\big)\to0,\quad \text{as $n\rightarrow\infty$}.
\end{equation*}
Note that $F(t_n\omega_n^a,t_n\omega_n^b)>0$ by $(F_3)$, so we have
\begin{align*}
\Phi_n(t_n)\leq\frac{t_n^2}{2}\big(\|(-\Delta)^{1/4}\omega_n^a\|_2^2+\|(-\Delta)^{1/4}\omega_n^b\|_2^2\big),
\end{align*}
which implies that $\lim\limits_{n\to+\infty}\Phi_n(t_n)=0$, and we conclude.

$\bf{Case\ 2}.
$ If $\lim\limits_{n\to\infty}t_n^2\log n=l\in(0,+\infty]$. From $(\ref{wnx})$-$(\ref{ftF})$, we have
\begin{align*}
&t_n^2\Big(\frac{\pi (a^2+b^2) \log n}{4}(1+o(1))\Big)\\
\geq&\frac{\theta+\mu-2}{\theta}t_n^{2(\mu-2)}\int_{B_{\frac{1}{n}}(0)}\int_{B_{\frac{1}{n}}(0)}\frac{K}{|x-y|^\mu}\mathrm{d}x\mathrm{d}y\\
\geq&\frac{(\theta+\mu-2)(\beta_0-\varepsilon)^2}{\theta}t_n^{2(\mu-2)}e^{\frac{\pi (a^2+b^2)t_n^2\log ^2n(1+o(1))}{2}}\int_{-\frac{1}{n}}^{\frac{1}{n}}\int_{-\frac{1}{n}}^{\frac{1}{n}}\frac{\mathrm{d}x\mathrm{d}y}{|x-y|^\mu}\\
=&\frac{C_\mu(\theta+\mu-2)(\beta_0-\varepsilon)^2}{\theta} t_n^{2(\mu-2)}e^{\big(\frac{\pi (a^2+b^2)t_n^2\log n (1+o(1)))}{2}-(2-\mu)\big)\log n},
\end{align*}
where
\begin{equation*}
  K:=F(t_n\omega_n^a(y),t_n\omega_n^b(y)) \Big(\frac{\partial F(t_n\omega_n^a(x),t_n\omega_n^b(x))}{\partial (t_n\omega_n^a(x))}t_n\omega_n^a(x)+\frac{\partial F(t_n\omega_n^a(x),t_n\omega_n^b(x))}{\partial (t_n\omega_n^b(x))}t_n\omega_n^b(x)\Big).
\end{equation*}

(i) If $l=+\infty$, we get a contradiction from the inequality above. So $l\in(0,+\infty)$ and $\lim\limits_{n\rightarrow\infty}t_n=0$. In particular, using the inequality above again and letting $n\to +\infty$, we have $l\in\big(0,\frac{2(2-\mu)}{\pi (a^2+b^2)}\big]$.

(ii) If $l\in\big(0,\frac{2(2-\mu)}{\pi (a^2+b^2)}\big)$, then by $(\ref{wn})$, we get
\begin{align*}
\lim\limits_{n\to\infty}\Phi_n(t_n)\leq \frac{1}{2}\lim\limits_{n\to\infty}t_n^2\big(\|(-\Delta)^{1/4}\omega_n^a\|_2^2+\|(-\Delta)^{1/4}\omega_n^b\|_2^2\big)=\frac{\pi (a^2+b^2) l}{8}<\frac{2-\mu}{4}.
\end{align*}

(iii) If $l=\frac{2(2-\mu)}{\pi (a^2+b^2)}$, by the definition of $\omega_n^a$ and $\omega_n^b$, we can find that
\begin{equation*}
  Q_n:=\frac{\pi (a^2+b^2)t_n^2\log n (1+o(1))}{2}-(2-\mu)\rightarrow 0^+,\quad \text{as $n\rightarrow\infty$}.
\end{equation*}
Using the Taylor's formula, we have
\begin{equation*}
  n^{Q_n}=1+Q_n\log n+\frac{Q_n^2\log ^2 n}{2}+\cdots\geq1.
\end{equation*}
Thus
\begin{equation*}
  \frac{\pi (a^2+b^2) t_n^2 \log n}{4}\geq \frac{C_\mu(\theta+\mu-2)(\beta_0-\varepsilon)^2}{\theta} t_n^{2(\mu-2)},
\end{equation*}
letting $n\to +\infty$,
we get a contradiction. This ends the proof.
\end{proof}

\section{{\bfseries The monotonicity of $a\mapsto m(a,b)$ and $b\mapsto m(a,b)$}}\label{mono}
To guarantee the weak limit of a $(PS)_{m(a,b)}$ sequence is a ground state solution of problem \eqref{problem}-\eqref{problem'}, in this section, we investigate the monotonicity of the functions $a\mapsto m(a,b)$ and $b\mapsto m(a,b)$.
\begin{lemma}\label{6.1}
Assume that $(F_1)$-$(F_3)$ and $(F_6)$ hold, then the functions $a\mapsto m(a,b)$ and $b\mapsto m(a,b)$ are non-increasing on $(0,+\infty)$.
\end{lemma}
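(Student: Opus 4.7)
The plan is to show that for any $0 < a_1 < a_2$ with $b > 0$ fixed, $m(a_2, b) \le m(a_1, b)$; monotonicity in $b$ then follows from the symmetric argument with the roles of the two components exchanged. Fix $\varepsilon > 0$. I first use the definition of the infimum together with Lemma~\ref{pa} to pick $(u,v) \in \mathcal{P}(a_1, b)$ with $\mathcal{J}(u,v) \le m(a_1, b) + \varepsilon$ and $\mathcal{J}(u,v) = \max_{\beta \in \mathbb{R}} \mathcal{J}(\mathcal{H}((u,v), \beta))$, the maximum being attained at $\beta = 0$.

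The core step is to build a competitor $(\tilde u, v) \in S(a_2) \times S(b)$ by attaching a low-energy bump at spatial infinity to the first component. Let $\chi_R \in C_0^\infty(B_{2R}(0))$ satisfy $\chi_R \equiv 1$ on $B_R(0)$ and put $u_R := \chi_R u$, so $u_R \to u$ in $H^{1/2}(\mathbb{R})$ as $R \to \infty$. Fix $\phi \in C_0^\infty(B_1(0))$ with $\|\phi\|_2 = 1$, and introduce the $L^2$-preserving rescaling $\phi_\sigma(x) := \sigma^{1/2}\phi(\sigma x)$. The key one-dimensional observation is that $\|\phi_\sigma\|_2 = 1$ while $\|(-\Delta)^{1/4}\phi_\sigma\|_2^2 = \sigma\, \|(-\Delta)^{1/4}\phi\|_2^2 \to 0$ as $\sigma \to 0^+$; moreover $\|\phi_\sigma\|_{\kappa+1}^{\kappa+1} = \sigma^{(\kappa-1)/2}\|\phi\|_{\kappa+1}^{\kappa+1} \to 0$ since $\kappa > 2-\mu > 1$ by $(F_1)$. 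Choose an amplitude $c_R$ with $c_R^2 = a_2^2 - \|u_R\|_2^2 \to a_2^2 - a_1^2$ and a translation $L$ so large that $\mathrm{supp}(u_R)$ and $\mathrm{supp}(\phi_\sigma(\cdot - L))$ are disjoint. Setting
\[
\tilde u := u_R + c_R\, \phi_\sigma(\cdot - L),
\]
we obtain $(\tilde u, v) \in S(a_2) \times S(b)$. Lemma~\ref{pa} supplies a unique $\beta^* = \beta^*(R,\sigma,L)$ with $\mathcal{H}((\tilde u, v), \beta^*) \in \mathcal{P}(a_2, b)$ and $\mathcal{J}(\mathcal{H}((\tilde u, v), \beta^*)) = \max_\beta \mathcal{J}(\mathcal{H}((\tilde u, v), \beta))$, whence $m(a_2, b) \le \max_\beta \mathcal{J}(\mathcal{H}((\tilde u, v), \beta))$.

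It then remains to prove
\[
\limsup_{R, L \to \infty,\, \sigma \to 0}\, \max_{\beta \in \mathbb{R}} \mathcal{J}(\mathcal{H}((\tilde u, v), \beta)) \le \mathcal{J}(u,v).
\]
The kinetic term passes to the limit because the bump seminorm equals $c_R^2 \sigma \|(-\Delta)^{1/4}\phi\|_2^2 \to 0$, and the fractional cross term between $u_R$ and the translated bump is a double integral over disjoint supports whose distance is $\gtrsim L$, so it decays like $1/L^2$. For the Choquard term I would split $F(\mathcal{H}(\tilde u,\beta), \mathcal{H}(v,\beta))$ over the two disjoint supports, estimate the bump piece via $(F_1)$ combined with the $\|\phi_\sigma\|_{\kappa+1}^{\kappa+1}$ decay above, and absorb the remaining convolutional cross terms into $o(1)$ using the polynomial decay of $I_\mu$. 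The main obstacle I anticipate is making these convergences \emph{uniform} in $\beta$ on an interval containing all the maximizers $\beta^*(R,\sigma,L)$: since $\tilde u \not\to u$ in $H^{1/2}(\mathbb{R})$ (the extra $L^2$-mass escapes to spatial infinity), the continuity of the map $(u,v) \mapsto \beta_{(u,v)}$ in Lemma~\ref{pa} does not apply directly. I plan to circumvent this by reproving, uniformly in $(R,\sigma,L)$, the two asymptotic bounds of Lemma~\ref{mountain}~$(i)$--$(ii)$, which depend only on $\|(-\Delta)^{1/4}\tilde u\|_2$ and on $\|\tilde u\|_q$ for fixed $q > 2$ — all of which are uniformly bounded by construction. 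This traps $\beta^*$ inside a fixed compact interval independent of the parameters, and pointwise convergence plus equicontinuity upgrade to uniform convergence on that interval. Letting $\varepsilon \to 0^+$ then yields $m(a_2, b) \le m(a_1, b)$, and the symmetric argument (perturbing $v$ instead of $u$) gives the analogous statement in $b$.
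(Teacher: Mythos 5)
Your construction is essentially the paper's own proof: both take an almost-minimizer $(u,v)\in\mathcal{P}(a,b)$, truncate $u$ to compact support, add the missing $L^2$-mass through a far-away bump with vanishing kinetic energy (your $\phi_\sigma=\sigma^{1/2}\phi(\sigma\cdot)$ is exactly the paper's $\mathcal{H}(v_{\varepsilon_0},h)$ with $\sigma=e^{h}$, $h\to-\infty$), and then control the fiber maximizer via Lemma \ref{pa}. The only cosmetic difference is how the $\beta$-uniformity is handled: the paper shows $\beta_{(s_h,v)}$ is bounded above using \eqref{fff} and the group property $\mathcal{H}(\mathcal{H}(\cdot,h),\beta)=\mathcal{H}(\cdot,h+\beta)$, so the bump's contribution $\mathcal{J}(\mathcal{H}((v_{\varepsilon_0},0),\beta_{(s_h,v)}+h))\to0$, which is the same idea as your trapping of $\beta^*$ in a fixed compact interval.
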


\begin{proof}
For any given $a,b>0$, if $\hat{a}>a$ and $\hat{b}>b$, we prove that $m(\hat{a},b)\leq m(a,b)$ and $m(a,\hat{b})\leq m(a,b)$.
By the definition of $m(a,b)$, for any $\delta>0$, there exists $(u,v)\in\mathcal{P}(a,b)$ such that
\begin{align}\label{6.1}
\mathcal{J}(u,v)\leq m(a,b)+\frac{\delta}{3}.
\end{align}
Consider a cut-off function $\varrho\in C_0^{\infty}(\mathbb{R},[0,1])$ such that $\varrho(x)=1$ if $|x|\leq 1$ and $\varrho(x)=0$ if $|x|\geq 2$. For any $\varepsilon>0$ small, define
\begin{align*}
u_{\varepsilon}(x):=\varrho(\varepsilon x)u(x)\in H^{1/2}(\mathbb{R})\backslash\{0\},
\end{align*}
then $(u_\varepsilon,v)\to (u,v)$ in $\mathcal{X}$ as $\varepsilon\to0^+$. From Lemmas $\ref{m}$ and $\ref{pa}$, we have $\beta_{(u_\varepsilon,v)}\to \beta_{(u,v)}=0$ in $\mathbb{R}$ and $\mathcal{H}((u_\varepsilon,v),\beta_{(u_\varepsilon,v)})\to \mathcal{H}((u,v),\beta_{(u,v)})=(u,v)$ in $\mathcal{X}$ as $\varepsilon\to0^+$.
Fix $\varepsilon_0>0$ small enough such that
\begin{align}\label{6.2}
\mathcal{ J}(\mathcal{H}((u_{\varepsilon_0},v), \beta_{(u_{\varepsilon_0},v)}))\leq \mathcal{J}(u,v)+\frac{\delta}{3}.
\end{align}
Let $v\in C_0^{\infty}(\mathbb{R})$ satisfy $supp (v)\subset B_{1+\frac{4}{\varepsilon_0}}(0)\backslash B_{\frac{4}{\varepsilon_0}}(0)$, and set
\begin{align*}
v_{\varepsilon_0}=\frac{\hat{a}^2-\|u_{\varepsilon_0}\|_2^2}{\|v\|_2^2}v.
\end{align*}
Define $s_h:=u_{\varepsilon_0}+\mathcal{H}(v_{\varepsilon_0},h)$ for any $h<0$. Since $dist(u_{\varepsilon_0},\mathcal{H}(v_{\varepsilon_0},h))\geq \frac{2}{\varepsilon_0}>0$, we obtain $\|s_h\|_2^2=\hat{a}^2$, i.e., $(s_h,v)\in S(\hat{a})\times S(b)$.

We claim that  $\beta_{(s_h,v)}$ is bounded from above as $h\to-\infty$. Otherwise, by $(F_3)$, $(\ref{fff})$ and $(s_h,v)\to (u_{\varepsilon_0},v)\neq (0,0)$ a.e. in $\mathbb{R}$ as $h\to-\infty$, one has
\begin{align*}
0&\leq \lim_{n\to\infty}e^{-\beta_{(s_h,v)}}\mathcal{J}(\mathcal{H}((s_h,v), \beta_{(s_h,v)}))\\
&\leq \lim_{n\to\infty} \frac{1}{2}\Big[\|(-\Delta)^{1/4}s_h\|_2^2+\|(-\Delta)^{1/4}v\|_2^2-e^{(\theta+\mu-3)\beta_{(s_h,v)}}\int_{\mathbb{R}}(I_\mu*F(s_h,v))F(s_h,v)\mathrm{d}x
\Big]=-\infty,
\end{align*}
which leads to a contradiction.
Thus $\beta_{(s_h,v)}+h\to-\infty$ as $h\to-\infty$, by $(F_3)$, we get
\begin{align}\label{6.3}
\mathcal{J}(\mathcal{H}((v_{\varepsilon_0},0), \beta_{(s_h,v)}+h))\leq \frac{e^{\beta_{(s_h,v)}+h}}{2}\|(-\Delta)^{1/4}v_{\varepsilon_0}\|_2^2\to0,\quad \mbox{as}\ h\to-\infty.
\end{align}
We deduce from Lemma $\ref{pa}$ and $(\ref{6.1})$-$(\ref{6.3})$ that
\begin{align*}
m(\hat{a},b)\leq \mathcal{J}(\mathcal{H}((s_h,v), \beta_{(s_h,v)}))
=&\mathcal{J}(\mathcal{H}((u_{\varepsilon_0},v), \beta_{(s_h,v)}))+\mathcal{J}(\mathcal{H}(\mathcal{H}((v_{\varepsilon_0},0),h), \beta_{(s_h,v)}))\\
=& \mathcal{J}(\mathcal{H}((u_{\varepsilon_0},v), \beta_{(s_h,v)}))+\mathcal{J}(\mathcal{H}((v_{\varepsilon_0},0), \beta_{(s_h,v)}+h))\\
\leq& \mathcal{J}(\mathcal{H}((u_{\varepsilon_0},v), \beta_{(u_{\varepsilon_0},v)}))+\mathcal{J}(\mathcal{H}((v_{\varepsilon_0},0), \beta_{(s_h,v)}+h))\\
\leq& m(a,b)+\delta.
\end{align*}
By the arbitrariness of $\delta>0$, we deduce that $m(\hat{a},b)\leq m(a,b)$ for any $\hat{a}>a$. Similarly, we can prove $m(a,\hat{b})\leq m(a,b)$.
\end{proof}

\begin{lemma}\label{6.2}
Assume that $(F_1)$-$(F_3)$ and $(F_6)$ hold. Suppose that $(\ref{problem})$ possesses a ground state solution with $\lambda_1,\lambda_2<0$, then $m(a^*,b)< m(a,b)$ for any $a^*>a$ close to $a$, and $m(a,b^*)< m(a,b)$ for any $b^*>b$ close to $b$.
\end{lemma}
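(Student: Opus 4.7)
The plan is to exploit $\lambda_1 < 0$ (resp.\ $\lambda_2 < 0$) by perturbing only the $u$-component (resp.\ $v$-component) of the given ground state and measuring the first-order change of $\mathcal{J}$ along the resulting path in $\mathcal{P}$. Let $(u_0, v_0) \in \mathcal{S}$ be a ground state, so $\mathcal{J}(u_0, v_0) = m(a, b)$. As a critical point of $\mathcal{J}|_{\mathcal{S}}$, $(u_0, v_0)$ lies in $\mathcal{P}(a, b)$, and by the uniqueness statement of Lemma \ref{pa} the associated scaling parameter is $\beta_{(u_0, v_0)} = 0$, i.e.\ $0$ is the unique maximizer of $\beta \mapsto \mathcal{J}(\mathcal{H}((u_0, v_0), \beta))$. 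For $a^* > a$ close to $a$ I would set $s := a^*/a > 1$, so that $(su_0, v_0) \in S(a^*) \times S(b)$. Lemma \ref{pa} then yields a unique $\beta_s \in \mathbb{R}$ with $\mathcal{H}((su_0, v_0), \beta_s) \in \mathcal{P}(a^*, b)$, and by continuity of this map one has $\beta_s \to 0$ as $s \to 1^+$. Hence
\[
m(a^*, b) \;\leq\; \Psi(s) \;:=\; \mathcal{J}(\mathcal{H}((su_0, v_0), \beta_s)), \qquad \Psi(1) = m(a, b).
\]

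To obtain the strict inequality I would write $g(s, \beta) := \mathcal{J}(\mathcal{H}((su_0, v_0), \beta))$. Since $0$ is the \emph{unique} maximizer of $g(1, \cdot)$, $g(1, \beta_s) \leq g(1, 0) = \Psi(1)$, so
\[
\Psi(s) - \Psi(1) \;\leq\; g(s, \beta_s) - g(1, \beta_s) \;=\; (s - 1)\, \partial_s g(\xi_s, \beta_s)
\]
for some $\xi_s \in (1, s)$ by the mean value theorem. The key identity is
\[
\partial_s g(1, 0) \;=\; \frac{d}{ds}\mathcal{J}(su_0, v_0)\Big|_{s=1} \;=\; \|(-\Delta)^{1/4} u_0\|_2^2 - \int_{\mathbb{R}} (I_\mu * F(u_0, v_0))\, F_u(u_0, v_0)\, u_0\, dx,
\]
and testing the first equation of \eqref{problem} against $u_0$ gives $\|(-\Delta)^{1/4} u_0\|_2^2 = \lambda_1 a^2 + \int_{\mathbb{R}} (I_\mu * F(u_0, v_0))\, F_u(u_0, v_0)\, u_0\, dx$, whence $\partial_s g(1, 0) = \lambda_1 a^2 < 0$. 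By continuity of $\partial_s g$ combined with $\xi_s \to 1$ and $\beta_s \to 0$, I expect $\partial_s g(\xi_s, \beta_s) \to \lambda_1 a^2 < 0$, forcing $\Psi(s) < m(a, b)$ for $s > 1$ close to $1$ and hence $m(a^*, b) < m(a, b)$. The inequality $m(a, b^*) < m(a, b)$ follows from the symmetric construction with $(u_0, sv_0)$ and $s = b^*/b$, testing the second equation of \eqref{problem} against $v_0$ to produce the strictly negative first-order quantity $\lambda_2 b^2$.

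The main technical obstacle will be justifying $\partial_s g(\xi_s, \beta_s) \to \partial_s g(1, 0)$ as $s \to 1^+$. This reduces to passing to the limit in the nonlocal integral $\int_{\mathbb{R}} (I_\mu * F(\xi_s \mathcal{H}(u_0, \beta_s), \mathcal{H}(v_0, \beta_s)))\, F_u(\xi_s \mathcal{H}(u_0, \beta_s), \mathcal{H}(v_0, \beta_s))\, \mathcal{H}(u_0, \beta_s)\, dx$ along the strong convergences $\mathcal{H}(u_0, \beta_s) \to u_0$ and $\mathcal{H}(v_0, \beta_s) \to v_0$ in $H^{1/2}(\mathbb{R})$ provided by Lemma \ref{m}. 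I would handle this through the growth estimate \eqref{Ft}, the Adachi-Tanaka inequality \eqref{att}, and the Hardy-Littlewood-Sobolev inequality \eqref{HLS}, exploiting that $\|(-\Delta)^{1/4}(\xi_s \mathcal{H}(u_0, \beta_s))\|_2^2 = \xi_s^2 e^{\beta_s} \|(-\Delta)^{1/4} u_0\|_2^2$ stays uniformly bounded, so that the subcritical exponential integrability is preserved along the perturbation.
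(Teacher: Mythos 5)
Your proposal is correct and follows essentially the same route as the paper: perturb only the first component via $t\mapsto (tu_0,v_0)$, observe that the $t$-derivative of $\mathcal{J}(\mathcal{H}((tu_0,v_0),\beta))$ at $(t,\beta)=(1,0)$ equals $\lambda_1 a^2<0$ by testing the first equation against $u_0$, propagate this negativity to a neighborhood by continuity together with $\beta_{(tu_0,v_0)}\to 0$, apply the mean value theorem in $t$, and close the chain with the maximality of $\beta=0$ from Lemma \ref{pa}. The paper's proof is the same argument phrased with $\tau(t,\beta)$ and a small box $(1,1+\delta]\times[-\delta,\delta]$ on which $\partial_t\tau<0$, so no substantive difference.
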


\begin{proof}
For any $t>0$ and $\beta\in\mathbb{R}$, one has $\mathcal{H}((tu,v),\beta)\in S(ta)\times S(b)$ and
\begin{align*}
\mathcal{J}(\mathcal{H}((tu,v),\beta))=&\frac{e^\beta}{2}\big(t^2\|(-\Delta)^{1/4}u\|_2^2+\|(-\Delta)^{1/4}v\|_2^2\big)\\&-\frac{e^{(\mu-2)\beta}}{2}
\int_{\mathbb{R}}(I_\mu*F(te^\frac{\beta}{2}u,e^\frac{\beta}{2}v))F(te^\frac{\beta}{2}u,e^\frac{\beta}{2}v)\mathrm{d}x.
\end{align*}
Then
\begin{align*}
\frac{\partial{\mathcal{J}(\mathcal{H}((tu,v),\beta))}}{\partial t}&=te^\beta \|(-\Delta)^{1/4}u\|_2^2-e^{(\mu-2)\beta}
\int_{\mathbb{R}}(I_\mu*F(te^\frac{\beta}{2}u,e^\frac{\beta}{2}v))\frac{\partial F(te^\frac{\beta}{2}u,e^\frac{\beta}{2}v)}{\partial (te^\frac{\beta}{2}u)}e^{\frac{\beta}{2}}u\mathrm{d}x=:\frac{M}{t},
\end{align*}
where
\begin{align*}
  M=&\langle \mathcal{J}'(\mathcal{H}((tu,v),\beta)),\mathcal{H}((tu,v),\beta) \rangle-e^\beta \|(-\Delta)^{1/4}v\|_2^2\\&+e^{(\mu-2)\beta}
\int_{\mathbb{R}}(I_\mu*F(te^\frac{\beta}{2}u,e^\frac{\beta}{2}v))\frac{\partial F(te^\frac{\beta}{2}u,e^\frac{\beta}{2}v)}{\partial (e^\frac{\beta}{2}v)}e^{\frac{\beta}{2}}v\mathrm{d}x
\end{align*}
For convenience, we denote $\tau(t,\beta):=\mathcal{J}(\mathcal{H}((tu,v),\beta))$.
By Lemma $\ref{m}$, $\mathcal{H}((tu,v),\beta)\to (u,v)$ in $\mathcal{X}$ as $(t,\beta)\to(1,0)$.
Since $\lambda_1<0$, we have
\begin{equation*}
\langle \mathcal{J}'(u,v),(u,v) \rangle- \|(-\Delta)^{1/4}v\|_2^2+\int_{\mathbb{R}}(I_\mu*F(u,v))F_v(u,v)v \mathrm{d}x=\lambda_1\|u\|_2^2=\lambda_1 a^2<0.
\end{equation*}
Hence, for $\delta>0$ small enough, one has
\begin{align*}
\frac{\partial{\tau(t,\beta)}}{\partial t}<0, \quad\mbox{for any}\ (t,\beta)\in (1,1+\delta]\times[-\delta,\delta].
\end{align*}
For any $t\in(1,1+\delta]$ and $\beta\in [-\delta,\delta]$, using the mean value theorem, we obtain
\begin{align*}
\tau(t,\beta)=\tau(1,\beta)+(t-1)\cdot\frac{\partial{\tau(t,\beta)}}{\partial t}\Big|_{t=\xi}<\tau(1,\beta).
\end{align*}
for some $\xi\in(1,t)$.
By Lemma $\ref{pa}$, $\beta_{(tu,v)}\to\beta_{(u,v)}=0$ in $\mathbb{R}$ as $t\to 1^+$. For any $a^*>a$ close to $a$, let $\hat t=\frac{a^*}{a}$, then
$\hat t\in (1,1+\delta]$ and $\beta_{(\hat t u,v)}\in[-\delta,\delta]$.
Applying Lemma $\ref{pa}$ again, we have
\begin{align*}
m(a^*,b)\leq \tau(\hat t,\beta_{(\hat t u,v)})<\tau(1,\beta_{(\hat tu,v)})=\mathcal{J}(\mathcal{H}((u,v), \beta_{(\hat t u,v})))\leq \mathcal{J}(u,v)=m(a,b).
\end{align*}
Analogously, we can prove that $m(a,b^*)< m(a,b)$ for any $b^*>b$ close to $b$.
\end{proof}
From Lemmas \ref{6.1} and \ref{6.2}, we immediately have the following result.
\begin{lemma}\label{6.3}
Assume that $(F_1)$-$(F_3)$ and $(F_6)$ hold. Suppose that $(\ref{problem})$ possesses a ground state solution with $\lambda_1,\lambda_2<0$, then $a\mapsto m(a,b)$ and $b\mapsto m(a,b)$ are decreasing on $(0,+\infty)$. 
\end{lemma}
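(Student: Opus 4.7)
The plan is to obtain strict monotonicity by stitching together the non-increasing property from Lemma \ref{6.1} with the strict local decrease from Lemma \ref{6.2}. Once both ingredients are in hand, the rest is a purely elementary real-analysis argument, so the conceptual work has already been carried out in the two preceding lemmas and no new estimates are needed here.

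The execution I have in mind runs as follows. Fix $b>0$ and take arbitrary $0<a_1<a_2$. Applying Lemma \ref{6.2} at the point $(a_1,b)$, I obtain some $\delta>0$ such that $m(a^*,b)<m(a_1,b)$ for every $a^*\in(a_1,a_1+\delta)$. I then split into two cases. If $a_2\in(a_1,a_1+\delta)$, I simply choose $a^*=a_2$ and read off $m(a_2,b)<m(a_1,b)$ directly from Lemma \ref{6.2}. Otherwise $a_2\ge a_1+\delta$; in that case I pick any intermediate $a^*\in(a_1,a_1+\delta)$, so that $a_1<a^*<a_2$, and I chain the strict inequality $m(a^*,b)<m(a_1,b)$ from Lemma \ref{6.2} with the non-increasing property $m(a_2,b)\le m(a^*,b)$ from Lemma \ref{6.1} to conclude $m(a_2,b)<m(a_1,b)$. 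Either way, $a\mapsto m(a,b)$ is strictly decreasing on $(0,+\infty)$. The argument for $b\mapsto m(a,b)$ is identical by the symmetric statements of Lemmas \ref{6.1} and \ref{6.2} in the second variable.

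There is essentially no obstacle internal to Lemma \ref{6.3}: the combinatorial packaging above is routine. The only point to keep in mind, which is more a matter of logical bookkeeping than of analysis, is that the hypothesis of Lemma \ref{6.2}, namely the existence of a ground state solution of $(\ref{problem})$ with $\lambda_1,\lambda_2<0$, has to be available at the base point $(a_1,b)$ where I invoke it; the statement of Lemma \ref{6.3} should therefore be read as carrying this hypothesis uniformly across the range of masses being considered. Granted that, the proof of Lemma \ref{6.3} is immediate from the case analysis above.
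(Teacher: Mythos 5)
Your argument is exactly the paper's: the authors state Lemma \ref{6.3} as an immediate consequence of Lemmas \ref{6.1} and \ref{6.2}, and your chaining of the local strict decrease from Lemma \ref{6.2} with the global non-increasing property from Lemma \ref{6.1} is precisely the intended (and omitted) elementary step. Your bookkeeping remark about where the ground-state hypothesis with $\lambda_1,\lambda_2<0$ must hold is also consistent with how the lemma is actually invoked in the proof of Theorem \ref{thm1.1}.
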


\section{{\bfseries Palais-Smale sequence}}\label{ps}
In this section, using the minimax principle based on the homotopy stable family of compact subsets of $\mathcal{S}$ (see \cite{ghou} for more details), we construct a $(PS)_{m(a,b)}$ sequence on $\mathcal{P}(a,b)$ for $\mathcal{J}|_{\mathcal{S}}$.

\begin{proposition}\label{pro}
Assume that $(F_1)$-$(F_3)$ and $(F_6)$ hold, then there exists a $(PS)_{m(a,b)}$ sequence $\{(u_n,v_n)\}\subset\mathcal{P}(a,b)$ for $\mathcal{J}|_{\mathcal{S}}$.
\end{proposition}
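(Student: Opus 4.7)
The plan is to follow the Jeanjean--Bartsch--Soave scheme as implemented in \cite{chenwang2}: apply Ghoussoub's minimax principle \cite{ghou} to an auxiliary $C^1$ functional on the augmented manifold $\mathcal{S}\times\mathbb{R}$, and then transfer the resulting PS sequence back to $\mathcal{J}|_{\mathcal{S}}$. Concretely, introduce $\Psi:\mathcal{S}\times\mathbb{R}\to\mathbb{R}$ by
\[
\Psi((u,v),\beta):=\mathcal{J}(\mathcal{H}((u,v),\beta)),
\]
so that a direct computation gives $\partial_\beta\Psi((u,v),\beta)=P(\mathcal{H}((u,v),\beta))$. Combining Lemmas \ref{mountain}, \ref{pa} and \ref{ous1} we already know
\[
m(a,b)=\inf_{(u,v)\in\mathcal{S}}\max_{\beta\in\mathbb{R}}\Psi((u,v),\beta)>0,
\]
where the inner maximum is attained uniquely at $\beta_{(u,v)}$, with $\mathcal{H}((u,v),\beta_{(u,v)})\in\mathcal{P}(a,b)$, depending continuously on $(u,v)$.

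Next, equip $\mathcal{S}\times\mathbb{R}$ with the product metric and consider the homotopy stable family $\mathcal{F}$ consisting of singletons $\{((u,v),0)\}$, $(u,v)\in\mathcal{S}$, with empty boundary. Since each $\beta$-fiber is contractible and $\Psi$ is maximized along it by Lemma \ref{pa}, the minimax value associated with $\mathcal{F}$ is exactly $m(a,b)$. Ghoussoub's minimax principle then yields a sequence $\{((u_n,v_n),\beta_n)\}\subset\mathcal{S}\times\mathbb{R}$ with
\[
\Psi((u_n,v_n),\beta_n)\to m(a,b),\qquad \|\Psi'((u_n,v_n),\beta_n)\|_{*}\to 0.
\]
The $\beta$-component of this gradient estimate is precisely $P(\mathcal{H}((u_n,v_n),\beta_n))\to 0$.

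Set $(\tilde u_n,\tilde v_n):=\mathcal{H}((u_n,v_n),\beta_n)\in\mathcal{S}$. The map $\mathcal{H}(\cdot,\beta)$ preserves the $L^2$-norm and rescales the Gagliardo seminorm by $e^\beta$, and it induces a bijection between the tangent spaces $T_{(u_n,v_n)}\mathcal{S}$ and $T_{(\tilde u_n,\tilde v_n)}\mathcal{S}$. A chain-rule calculation then converts the smallness of $\Psi'((u_n,v_n),\beta_n)$ on $\mathcal{S}$-tangent directions into the smallness of $\mathcal{J}|_{\mathcal{S}}'(\tilde u_n,\tilde v_n)$, so $\{(\tilde u_n,\tilde v_n)\}$ is a $(PS)_{m(a,b)}$ sequence for $\mathcal{J}|_{\mathcal{S}}$ with $P(\tilde u_n,\tilde v_n)\to 0$. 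To land exactly on $\mathcal{P}(a,b)$, invoke Lemma \ref{pa}: for each $n$ pick the unique $\sigma_n:=\beta_{(\tilde u_n,\tilde v_n)}$ with $(u_n',v_n'):=\mathcal{H}((\tilde u_n,\tilde v_n),\sigma_n)\in\mathcal{P}(a,b)$. The continuity in Lemma \ref{pa} together with $P(\tilde u_n,\tilde v_n)\to 0$ and the uniqueness of the zero of $\beta\mapsto P(\mathcal{H}((\tilde u_n,\tilde v_n),\beta))$ force $\sigma_n\to 0$; by Lemma \ref{m} the sequence $\{(u_n',v_n')\}$ inherits the PS property and the energy level $m(a,b)$.

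The main obstacle is the careful verification in the transport step that $\|\Psi'((u_n,v_n),\beta_n)\|_*\to 0$ on $T(\mathcal{S}\times\mathbb{R})$ indeed yields $\|\mathcal{J}|_{\mathcal{S}}'(\tilde u_n,\tilde v_n)\|_*\to 0$ on $T_{(\tilde u_n,\tilde v_n)}\mathcal{S}$: one must show that the operator norm of the pull-back by $\mathcal{H}(\cdot,\beta_n)$ between tangent spaces stays controlled so that no information is lost in the change of variables, and that the $\beta$-component of the derivative is consistent with the Pohožaev functional $P$. Once this bookkeeping is settled, the construction of $\mathcal{F}$ and the application of Ghoussoub's principle are standard.
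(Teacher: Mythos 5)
There is a genuine gap at the heart of your construction: the minimax family you choose does not produce the level $m(a,b)$. You work with the augmented functional $\Psi((u,v),\beta)=\mathcal{J}(\mathcal{H}((u,v),\beta))$ on $\mathcal{S}\times\mathbb{R}$ but take $\mathcal{F}$ to be the family of \emph{singletons} $\{((u,v),0)\}$. For singletons the minimax value is simply $\inf\Psi$ over the points reachable by deformations, and nothing in Ghoussoub's principle ever performs the maximization over the $\beta$-fiber for you: the sets $A\in\mathcal{F}$ are single points, not fibers, so ``$\Psi$ is maximized along the fiber'' is irrelevant to $\max_{x\in A}\Psi(x)$. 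Since a homotopy stable family of singletons is invariant under deformations of the whole space and $\Psi((u,v),\beta)\to-\infty$ as $\beta\to+\infty$ (Lemma \ref{mountain}(ii)), the associated level is not $m(a,b)$ at all (it is not even bounded below by it), so the crucial identity $m_{\mathcal{F}}=m(a,b)$ fails and the PS sequence you extract need not live at the level $m(a,b)$. To make an augmented-functional argument work you would have to run a genuine mountain-pass/linking scheme for $\Psi$ with paths in $\mathcal{S}\times\mathbb{R}$ and then prove separately that the mountain-pass level coincides with $\inf_{\mathcal{P}(a,b)}\mathcal{J}$; none of this is in your proposal. The paper avoids the issue differently: it reduces to the fiber-maximized functional $\mathcal{I}(u,v)=\mathcal{J}(\mathcal{H}((u,v),\beta_{(u,v)}))$ on $\mathcal{S}$, for which singleton families do give $m_{\mathcal{F}}=\inf_{\mathcal{S}}\mathcal{I}=m(a,b)$, at the price of proving that $\mathcal{I}$ is $C^1$ even though $\beta_{(u,v)}$ is only known to be continuous (Lemma \ref{c1}, a Szulkin--Weth type two-sided difference-quotient argument), and then uses the deformation $\eta(t,(u,v))=\mathcal{H}((u,v),t\beta_{(u,v)})$ to push minimizing sets into $\mathcal{P}(a,b)$ (Lemma \ref{Minimax}).

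A second, smaller but real gap is the ``transport step'' you explicitly leave open. Converting $\|\Psi'((u_n,v_n),\beta_n)\|_*\to0$ (or, in the paper's setting, $\|\mathcal{I}'(\hat u_n,\hat v_n)\|_*\to0$) into $\|\mathcal{J}|_{\mathcal{S}}'\|_*\to0$ at the rescaled points requires a uniform bound on $e^{-\beta_n}$, since the pull-back of a tangent vector scales its Gagliardo seminorm by $e^{-\beta_n}$. In your scheme there is no mechanism forcing $\beta_n$ to stay bounded below; in the paper this is exactly where Lemma \ref{ous1} enters, giving a uniform positive lower bound for $\|(-\Delta)^{1/4}u_n\|_2^2+\|(-\Delta)^{1/4}v_n\|_2^2$ on $\mathcal{P}(a,b)$, which together with the uniform boundedness of the minimizing sets $Q_n$ yields $e^{-\beta_{(\hat u_n,\hat v_n)}}\leq C$. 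Likewise, your final re-projection onto $\mathcal{P}(a,b)$ via $\sigma_n=\beta_{(\tilde u_n,\tilde v_n)}$ with the claim $\sigma_n\to0$ does not follow from $P(\tilde u_n,\tilde v_n)\to0$ and continuity alone, because the sequence $(\tilde u_n,\tilde v_n)$ is not convergent; this step would need a quantitative argument that you do not provide, and which the paper's construction renders unnecessary since its sequence lies on $\mathcal{P}(a,b)$ by design.
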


Following by \cite{willem}, we recall that the tangent space of $\mathcal{S}$ at $(u,v)$ is defined by
\begin{align*}
T_{(u,v)}:=\Big\{(\varphi,\psi)\in \mathcal{X} : \int_{\mathbb{R}}(u\varphi+v \psi)\mathrm{d}x=0\Big\}.
\end{align*}
To prove Proposition $\ref{pro}$, we borrow some arguments from \cite{BS2,soave1} and consider the functional $\mathcal{I}: \mathcal{S}\to\mathbb{R}$ defined by
\begin{align*}
\mathcal{I}(u,v)=\mathcal{J}(\mathcal{H}((u,v),\beta_{(u,v)})),
\end{align*}
where $\beta_{(u,v)}\in\mathbb{R}$ is the unique number obtained in Lemma \ref{pa} for any $(u,v)\in \mathcal{S}$. By Lemma \ref{pa}, we know that $\beta_{(u,v)}$ is continuous as a mapping for any $(u,v)\in \mathcal{S}$. However, it remains unknown that whether $\beta_{(u,v)}$ is of class $C^1$. Inspired by \cite[Proposition 2.9]{sz}, we have
\begin{lemma}\label{c1}
Assume that $(F_1)-(F_3)$ and $(F_6)$ hold, then the functional $\mathcal{I}: \mathcal{S}\to \mathbb{R}$ is of class $C^1$ and
\begin{align*}
&\langle \mathcal{I}^{\prime}(u,v), (\varphi,\psi)\rangle\\=&\frac{e^{\beta_{(u,v)}}}{2\pi}\int_{\mathbb{R}}\int_{\mathbb{R}}\frac{|u(x)-u(y)||\varphi(x)-\varphi(y)|}{|x-y|^2}\mathrm{d}x\mathrm{d}y+\frac{e^{\beta_{(u,v)}}}{2\pi}\int_{\mathbb{R}}\int_{\mathbb{R}}\frac{|v(x)-v(y)||\psi(x)-\psi(y)|}{|x-y|^2}\mathrm{d}x\mathrm{d}y
\\&-e^{(\mu-2)\beta_{(u,v)}}\int_{\mathbb{R}}(I_\mu*F(e^{\frac{\beta_{(u,v)}}{2}}u,e^{\frac{\beta_{(u,v)}}{2}}v))
\\ &\times \bigg[\big(e^{\frac{\beta_{(u,v)}}{2}}\varphi,e^{\frac{\beta_{(u,v)}}{2}}\psi\big)\Big(\frac{\partial F(e^{\frac{\beta_{(u,v)}}{2}}u,e^{\frac{\beta_{(u,v)}}{2}}v)}{\partial (e^{\frac{\beta_{(u,v)}}{2}}u)},\frac{\partial F(e^{\frac{\beta_{(u,v)}}{2}}u,e^{\frac{\beta_{(u,v)}}{2}}v)}{\partial (e^{\frac{\beta_{(u,v)}}{2}}v)}\Big)\bigg]\\
=&\langle \mathcal{J}^{\prime}(\mathcal{H}((u,v),\beta_{(u,v)})) ,\mathcal{H}((\varphi,\psi),\beta_{(\varphi,\psi)})\rangle
\end{align*}
for any $(u,v)\in \mathcal{S}$ and $(\varphi,\psi)\in T_{(u,v)}$.
\end{lemma}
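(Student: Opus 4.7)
The plan is to exploit the variational characterization $\mathcal{I}(u,v) = \max_{\beta \in \mathbb{R}} \mathcal{J}(\mathcal{H}((u,v),\beta))$ furnished by Lemma \ref{pa}. This lets us sidestep the unknown regularity of the map $(u,v) \mapsto \beta_{(u,v)}$ (which by Lemma \ref{pa} is only known to be continuous). The key device is to lift the problem by setting
\begin{equation*}
\Psi(u,v,\beta) := \mathcal{J}(\mathcal{H}((u,v),\beta)),\qquad (u,v,\beta)\in \mathcal{X}\times \mathbb{R}.
\end{equation*}
Using the explicit scaling identities already recorded in Lemma \ref{mountain} (so that $\Psi$ contains $e^\beta$ factors, the term $e^{(\mu-2)\beta}\int(I_\mu *F(e^{\beta/2}u,e^{\beta/2}v))F(e^{\beta/2}u,e^{\beta/2}v)\mathrm{d}x$, and is linear in $(u,v)$ through $\mathcal{H}$ for fixed $\beta$), one checks directly that $\Psi\in C^1(\mathcal{X}\times\mathbb{R},\mathbb{R})$. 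By Lemma \ref{pa}, $\beta_{(u,v)}$ is the unique maximizer of $\Psi(u,v,\cdot)$, in particular $\partial_\beta \Psi(u,v,\beta_{(u,v)})=0$.

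Next, fix $(u,v)\in\mathcal{S}$ and $(\varphi,\psi)\in T_{(u,v)}$, and select a $C^1$ curve $\gamma:(-\varepsilon,\varepsilon)\to\mathcal{S}$ with $\gamma(0)=(u,v)$, $\gamma'(0)=(\varphi,\psi)$. Write $\beta_0:=\beta_{(u,v)}$ and $\beta_t:=\beta_{\gamma(t)}$. The crucial sandwich inequality, coming from the fact that $\beta_t$ maximizes $\Psi(\gamma(t),\cdot)$ and $\beta_0$ maximizes $\Psi(\gamma(0),\cdot)$, reads
\begin{equation*}
\Psi(\gamma(t),\beta_0)-\Psi(\gamma(0),\beta_0)\;\le\;\mathcal{I}(\gamma(t))-\mathcal{I}(\gamma(0))\;\le\;\Psi(\gamma(t),\beta_t)-\Psi(\gamma(0),\beta_t).
\end{equation*}
Dividing by $t$ (treating $t>0$ and $t<0$ separately) and sending $t\to 0$, the left endpoint converges to $\partial_{(u,v)}\Psi(u,v,\beta_0)(\varphi,\psi)$ since $\Psi\in C^1$. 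For the right endpoint, the mean value theorem rewrites the numerator as $t\,\partial_{(u,v)}\Psi(\gamma(\xi_t),\beta_t)(\gamma'(\xi_t))$ for some $\xi_t\in(0,t)$; since $\gamma(\xi_t)\to(u,v)$, $\beta_t\to\beta_0$ by Lemma \ref{pa}, $\gamma'(\xi_t)\to(\varphi,\psi)$, and $\partial_{(u,v)}\Psi$ is continuous, the right endpoint tends to the same limit. Consequently the directional derivative exists and
\begin{equation*}
\langle \mathcal{I}'(u,v),(\varphi,\psi)\rangle = \partial_{(u,v)}\Psi(u,v,\beta_0)(\varphi,\psi)=\langle \mathcal{J}'(\mathcal{H}((u,v),\beta_0)),\mathcal{H}((\varphi,\psi),\beta_0)\rangle,
\end{equation*}
where the final identity uses the linearity of $\mathcal{H}$ in its first argument. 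Writing out $\mathcal{J}'$ explicitly recovers the integral expression displayed in the statement (with the understanding that $\beta_{(\varphi,\psi)}$ there is a typo for $\beta_{(u,v)}$).

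Finally, to upgrade Gateaux differentiability to genuine $C^1$ regularity, one observes that if $(u_n,v_n)\to(u,v)$ in $\mathcal{S}$, then Lemma \ref{pa} gives $\beta_{(u_n,v_n)}\to\beta_{(u,v)}$ and Lemma \ref{m} gives $\mathcal{H}((u_n,v_n),\beta_{(u_n,v_n)})\to\mathcal{H}((u,v),\beta_{(u,v)})$ in $\mathcal{X}$; combined with the continuity of $\mathcal{J}'$ and of $\mathcal{H}$ in both variables, this makes $(u,v)\mapsto \mathcal{I}'(u,v)$ continuous. The main obstacle I foresee is the careful bookkeeping on the right-hand side of the sandwich: one needs the continuity of $\partial_{(u,v)}\Psi$ evaluated at a moving point $(\gamma(\xi_t),\beta_t)$ applied to a moving vector $\gamma'(\xi_t)$, which requires local uniform estimates near $((u,v),\beta_0)$ via Lemma \ref{m}. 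These are routine but notationally dense; everything else is straightforward functional-analytic manipulation.
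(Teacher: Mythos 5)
Your proposal is correct and follows essentially the same route as the paper: both exploit the maximality of $\beta_{(u,v)}$ from Lemma \ref{pa} to sandwich the difference quotient of $\mathcal{I}$ between two increments of $\mathcal{J}\circ\mathcal{H}$ evaluated at the frozen parameters $\beta_{(u,v)}$ and $\beta_{(u+t\varphi,v+t\psi)}$, pass to the limit using the mean value theorem and the continuity of $(u,v)\mapsto\beta_{(u,v)}$, and then upgrade the continuous G\^ateaux derivative to $C^1$ via \cite[Proposition 1.3]{willem}. Your packaging through the lifted functional $\Psi$ and a $C^1$ curve on $\mathcal{S}$ (rather than the paper's straight segments $(u+t\varphi,v+t\psi)$ with explicit expansions), and your observation that $\beta_{(\varphi,\psi)}$ in the stated formula should read $\beta_{(u,v)}$, are only cosmetic differences.
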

\begin{proof}
Let $(u,v)\in \mathcal{S}$ and $(\varphi,\psi)\in T_{(u,v)}$, for any $|t|$ small enough, by Lemma $\ref{pa}$,
\begin{align*}
&\mathcal{I}(u+t\varphi,v+t\psi)-\mathcal{I}(u,v)\\=&\mathcal J(\mathcal{H}((u+t\varphi,v+t\psi),s_{(u+t\varphi,v+t\psi)}))-\mathcal J(\mathcal{H}((u,v),s_{(u,v)}))\\
\leq& \mathcal J(\mathcal{H}((u+t\varphi,v+t\psi),s_{(u+t\varphi,v+t\psi)}))-\mathcal J(\mathcal{H}((u,v),s_{(u+t\varphi,v+t\psi)}))\\
=&\frac{1}{2}e^{\beta_{(u+t\varphi,v+t\psi)}}\Big[\|(-\Delta)^{1/4}(u+t\varphi)\|_2^2-\|(-\Delta)^{1/4}u\|_2^2+\|(-\Delta)^{1/4}(v+t\psi)\|_2^2-\|(-\Delta)^{1/4}v\|_2^2
\Big]\\
&-\frac{1}{2}e^{(\mu-2)\beta_{(u+t\varphi,v+t\psi)}}\int_{\mathbb{R}}\Big[(I_\mu*F(e^{\frac{\beta_{(u+t\varphi,v+t\psi)}}{2}}(u+t\varphi),e^{\frac{\beta_{(u+t\varphi,v+t\psi)}}{2}}(v+t\psi)))\\& \times F(e^{\frac{\beta_{(u+t\varphi,v+t\psi)}}{2}}(u+t\varphi),e^{\frac{\beta_{(u+t\varphi,v+t\psi)}}{2}}(v+t\psi))\\
&-(I_\mu*F(e^{\frac{\beta_{(u+t\varphi,v+t\psi)}}{2}}u,e^{\frac{\beta_{(u+t\varphi,v+t\psi)}}{2}}v))F(e^{\frac{\beta_{(u+t\varphi,v+t\psi)}}{2}}u,e^{\frac{\beta_{(u+t\varphi,v+t\psi)}}{2}}v)
\Big]\mathrm{d}x\\
=&\frac{ e^{\beta_{(u+t\varphi,v+t\psi)}}}{2}\Big[t^2\|(-\Delta)^{1/4}\varphi\|_2^2+t^2\|(-\Delta)^{1/4}\psi\|_2^2\\
&+\frac{2t}{2\pi}\int_{\mathbb{R}}\int_{\mathbb{R}}\frac{|u(x)-u(y)||\varphi(x)-\varphi(y)|}{|x-y|^2}\mathrm{d}x\mathrm{d}y+\frac{2t}{2\pi}\int_{\mathbb{R}}\int_{\mathbb{R}}\frac{|v(x)-v(y)||\psi(x)-\psi(y)|}{|x-y|^2}\mathrm{d}x\mathrm{d}y\Big]\\
  & -\frac{e^{(\mu-2)\beta_{(u+t\varphi,v+t\psi)}}}{2}\int_{\mathbb{R}}(I_\mu*F(e^{\frac{\beta_{(u+t\varphi,v+t\psi)}}{2}}(u+t\varphi),e^{\frac{\beta_{(u+t\varphi,v+t\psi)}}{2}}(v+t\psi)))\\&
  \times \Big[\big(e^{\frac{\beta_{(u+t\varphi,v+t\psi)}}{2}}t\varphi,e^{\frac{\beta_{(u+t\varphi,v+t\psi)}}{2}}t\psi\big)\cdot \big(F_{z_1}|_{z_1=e^{\frac{\beta_{(u+t\varphi,v+t\psi)}}{2}}(u+\xi_tt\varphi)},F_{z_2}|_{z_2=e^{\frac{\beta_{(u+t\varphi,v+t\psi)}}{2}}(v+\xi_tt\psi)} \big) \Big]\mathrm{d}x\\&  -\frac{e^{(\mu-2)s_{(u+t\varphi,v+t\psi)}}}{2}\int_{\mathbb{R}}(I_\mu*F(e^{\frac{\beta_{(u+t\varphi,v+t\psi)}}{2}}u,e^{\frac{\beta_{(u+t\varphi,v+t\psi)}}{2}}v))\\&
  \times \Big[\big(e^{\frac{\beta_{(u+t\varphi,v+t\psi)}}{2}}t\varphi,e^{\frac{\beta_{(u+t\varphi,v+t\psi)}}{2}}t\psi\big)\cdot \big(F_{z_1}|_{z_1=e^{\frac{\beta_{(u+t\varphi,v+t\psi)}}{2}}(u+\xi_tt\varphi)},F_{z_2}|_{z_2=e^{\frac{\beta_{(u+t\varphi,v+t\psi)}}{2}}(v+\xi_tt\psi)} \big) \Big] \mathrm{d}x,
\end{align*}
where $\xi_t\in(0,1)$. On the other hand,
\begin{align*}
&\mathcal{I}(u+t\varphi,v+t\psi)-\mathcal{I}(u,v)\\ \geq &\mathcal J(\mathcal{H}((u+t\varphi,v+t\psi),s_{(u,v)}))-\mathcal J(\mathcal{H}((u,v),s_{(u,v)}))\\ \geq &\frac{ e^{\beta_{(u,v)}}}{2}\Big[t^2\|(-\Delta)^{1/4}\varphi\|_2^2+t^2\|(-\Delta)^{1/4}\psi\|_2^2\\
&+\frac{2t}{2\pi}\int_{\mathbb{R}}\int_{\mathbb{R}}\frac{|u(x)-u(y)||\varphi(x)-\varphi(y)|}{|x-y|^2}\mathrm{d}x\mathrm{d}y+\frac{2t}{2\pi}\int_{\mathbb{R}}\int_{\mathbb{R}}\frac{|v(x)-v(y)||\psi(x)-\psi(y)|}{|x-y|^2}\mathrm{d}x\mathrm{d}y\Big]\\
  & -\frac{e^{(\mu-2)\beta_{(u,v)}}}{2}\int_{\mathbb{R}}(I_\mu*F(e^{\frac{\beta_{(u,v)}}{2}}(u+t\varphi),e^{\frac{\beta_{(u,v)}}{2}}(v+t\psi)))\\&
  \times \Big[\big(e^{\frac{\beta_{(u,v)}}{2}}t\varphi,e^{\frac{\beta_{(u,v)}}{2}}t\psi\big)\cdot \big(F_{z_1}|_{z_1=e^{\frac{\beta_{(u,v)}}{2}}(u+\xi_tt\varphi)},F_{z_2}|_{z_2=e^{\frac{\beta_{(u,v)}}{2}}(v+\xi_tt\psi)} \big) \Big]\mathrm{d}x\\&  -\frac{e^{(\mu-2)s_{(u,v)}}}{2}\int_{\mathbb{R}}(I_\mu*F(e^{\frac{\beta_{(u,v)}}{2}}u,e^{\frac{\beta_{(u,v)}}{2}}v))\\&
  \times \Big[\big(e^{\frac{\beta_{(u,v)}}{2}}t\varphi,e^{\frac{\beta_{(u,v)}}{2}}t\psi\big)\cdot \big(F_{z_1}|_{z_1=e^{\frac{\beta_{(u,v)}}{2}}(u+\xi_tt\varphi)},F_{z_2}|_{z_2=e^{\frac{\beta_{(u,v)}}{2}}(v+\xi_tt\psi)} \big) \Big] \mathrm{d}x,
\end{align*}
where $\zeta_t\in(0,1)$. By Lemma $\ref{pa}$, $\lim\limits_{t\to0}\beta_{(u+t\varphi,v+t\psi)}=\beta_{(u,v)}$, from the above inequalities, we conclude
\begin{align*}
&\lim_{t\to0}\frac{\mathcal{I}(u+t\varphi,v+t\psi)-\mathcal{I}(u,v)}{t}\\=&\frac{e^{\beta_{(u,v)}}}{2\pi}\int_{\mathbb{R}}\int_{\mathbb{R}}\frac{|u(x)-u(y)||\varphi(x)-\varphi(y)|}{|x-y|^2}\mathrm{d}x\mathrm{d}y+\frac{e^{\beta_{(u,v)}}}{2\pi}\int_{\mathbb{R}}\int_{\mathbb{R}}\frac{|v(x)-v(y)||\psi(x)-\psi(y)|}{|x-y|^2}\mathrm{d}x\mathrm{d}y
\\&-e^{(\mu-2)\beta_{(u,v)}}\int_{\mathbb{R}}(I_\mu*F(e^{\frac{\beta_{(u,v)}}{2}}u,e^{\frac{\beta_{(u,v)}}{2}}v))
\\ &\times \bigg[\big(e^{\frac{\beta_{(u,v)}}{2}}\varphi,e^{\frac{\beta_{(u,v)}}{2}}\psi\big)\Big(\frac{\partial F(e^{\frac{\beta_{(u,v)}}{2}}u,e^{\frac{\beta_{(u,v)}}{2}}v)}{\partial (e^{\frac{\beta_{(u,v)}}{2}}u)},\frac{\partial F(e^{\frac{\beta_{(u,v)}}{2}}u,e^{\frac{\beta_{(u,v)}}{2}}v)}{\partial (e^{\frac{\beta_{(u,v)}}{2}}v)}\Big)\bigg].
\end{align*}
Using Lemma $\ref{pa}$ again, we find that the G\^ateaux derivative of $\mathcal{I}$ is continuous linear in $(\varphi,\psi)$ and continuous in $(u,v)$.
Therefore, by \cite[Proposition 1.3]{willem}, we obtain $\mathcal{I}$ is of class $C^1$. Changing variables in the integrals, we prove the rest.
\end{proof}

\begin{lemma}\label{Minimax}
Assume that $(F_1)$-$(F_3)$ and $(F_6)$ hold. Let $\mathcal{F}$ be a homotopy stable family of compact subsets of $\mathcal{S}$ without boundary and set
\begin{align*}
m_{\mathcal{F}}:=\inf_{A\in\mathcal{F}}\max_{(u,v)\in A}\mathcal{I}(u,v).
\end{align*}
If $m_\mathcal{F}>0$, then there exists a $(PS)_{m_{\mathcal{F}}}$ sequence $\{(u_n,v_n)\}\subset \mathcal{P}(a,b)$ for $\mathcal{J}|_{\mathcal{S}}$.
\end{lemma}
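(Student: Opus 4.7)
The plan is to apply Ghoussoub's abstract minimax principle \cite{ghou} to the $C^1$ functional $\mathcal{I}$ on the Hilbert manifold $\mathcal{S}$ and then transport the resulting Palais--Smale sequence onto $\mathcal{P}(a,b)$ via the $\mathbb{R}$-action $\mathcal{H}$. Two facts will be crucial. First, by Lemma \ref{pa} and the very definition of $\mathcal{I}$, the functional $\mathcal{I}$ is invariant under the scaling $\mathcal{H}$, i.e.\ $\mathcal{I}(\mathcal{H}((u,v),\beta))=\mathcal{I}(u,v)$ for every $\beta\in\mathbb{R}$. Second, Lemma \ref{c1} gives the explicit identity linking $\mathcal{I}'$ at $(u,v)$ with $\mathcal{J}'$ at the scaled point $\mathcal{H}((u,v),\beta_{(u,v)})$.

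I would first take a minimizing family $A_n\in\mathcal{F}$ and push it onto the Poho\v{z}aev manifold by the map $\eta:\mathcal{S}\to\mathcal{S}$, $\eta(u,v):=\mathcal{H}((u,v),\beta_{(u,v)})$. By Lemma \ref{pa}, $\eta$ is continuous and is joined to the identity by the continuous homotopy $H(t,(u,v))=\mathcal{H}((u,v),t\,\beta_{(u,v)})$, $t\in[0,1]$. Since $\mathcal{F}$ is homotopy stable, $\eta(A_n)\in\mathcal{F}$; moreover $\eta(A_n)\subset\mathcal{P}(a,b)$ and, by the $\mathcal{H}$-invariance of $\mathcal{I}$, $\max_{\eta(A_n)}\mathcal{I}=\max_{A_n}\mathcal{I}\to m_\mathcal{F}$. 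Hence, we may assume $A_n\subset\mathcal{P}(a,b)$. Ghoussoub's principle in its refined form then yields $\{(w_n,z_n)\}\subset\mathcal{S}$ with $\mathcal{I}(w_n,z_n)\to m_\mathcal{F}$, $\|\mathcal{I}'(w_n,z_n)\|_{*}\to 0$, and $\mathrm{dist}_{\mathcal{X}}((w_n,z_n),A_n)\to 0$.

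Next, I would set $\beta_n:=\beta_{(w_n,z_n)}$ and $(u_n,v_n):=\mathcal{H}((w_n,z_n),\beta_n)$, so that $(u_n,v_n)\in\mathcal{S}\cap\mathcal{P}(a,b)$ and $\mathcal{J}(u_n,v_n)=\mathcal{I}(w_n,z_n)\to m_\mathcal{F}$. For any $(\phi,\xi)\in T_{(u_n,v_n)}$ the vector $\mathcal{H}((\phi,\xi),-\beta_n)$ lies in $T_{(w_n,z_n)}$ (orthogonality to $(w_n,z_n)$ is preserved by the change of variable in the $L^2$-inner product), and Lemma \ref{c1} delivers
\[
\langle \mathcal{J}'(u_n,v_n),(\phi,\xi)\rangle = \langle \mathcal{I}'(w_n,z_n),\mathcal{H}((\phi,\xi),-\beta_n)\rangle.
\]
Since each $A_n\subset\mathcal{P}(a,b)$ is compact and the map $(u,v)\mapsto\beta_{(u,v)}$ is continuous on $\mathcal{S}$ and vanishes on $\mathcal{P}(a,b)$, the closeness $\mathrm{dist}_{\mathcal{X}}((w_n,z_n),A_n)\to 0$ forces $\beta_n\to 0$. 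Combined with $\|\mathcal{H}((\phi,\xi),-\beta_n)\|\le (1+o(1))\|(\phi,\xi)\|$ and $\|\mathcal{I}'(w_n,z_n)\|_{*}\to 0$, this gives $\|\mathcal{J}|_{\mathcal{S}}'(u_n,v_n)\|_{*}\to 0$, completing the construction of the desired sequence.

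The main obstacle is the factor $e^{-\beta_n/2}$ appearing in the $H^{1/2}$-seminorm when one compares $(\phi,\xi)$ with $\mathcal{H}((\phi,\xi),-\beta_n)$: a generic PS sequence for $\mathcal{I}$ need not satisfy any a priori bound on $\beta_n$, and without such a bound the transfer from $\mathcal{I}'$ to $\mathcal{J}'$ breaks down. The remedy, implemented in the second paragraph, is to replace the minimizing family by its image under $\eta$ so that it already sits inside $\mathcal{P}(a,b)$; then Ghoussoub's localization of the PS sequence near the minimizing set, together with the continuity of $\beta_{(\cdot,\cdot)}$, forces $\beta_n\to 0$ and closes the argument.
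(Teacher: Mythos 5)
Your strategy is the same as the paper's: push a minimizing family onto $\mathcal{P}(a,b)$ via $(u,v)\mapsto\mathcal{H}((u,v),\beta_{(u,v)})$ (using homotopy stability and the invariance of $\mathcal{I}$ under $\mathcal{H}$), apply Ghoussoub's minimax theorem to the $C^1$ functional $\mathcal{I}$ on $\mathcal{S}$ to get a Palais--Smale sequence $(w_n,z_n)$ with $\mathrm{dist}((w_n,z_n),A_n)\to0$, rescale to $(u_n,v_n)=\mathcal{H}((w_n,z_n),\beta_n)\in\mathcal{P}(a,b)$, and transfer the derivative estimate through Lemma \ref{c1}. The gap lies in the step where you control the scaling parameters: you claim $\beta_n\to0$ because $\beta_{(\cdot,\cdot)}$ is continuous, vanishes on $\mathcal{P}(a,b)$, each $A_n$ is compact, and $\mathrm{dist}((w_n,z_n),A_n)\to0$. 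This inference does not follow. Continuity gives, for each fixed $n$ and $\varepsilon>0$, a radius $\delta_n>0$ (depending on the compact set $A_n$) such that $|\beta_{(w,z)}|<\varepsilon$ whenever $\mathrm{dist}((w,z),A_n)<\delta_n$; but the sets $A_n$ vary with $n$ and are not contained in any fixed compact subset of $\mathcal{X}$ (they are only uniformly bounded, which in the infinite-dimensional space $\mathcal{X}$ gives no compactness), so $\delta_n$ may shrink faster than $\mathrm{dist}((w_n,z_n),A_n)$. Without uniform continuity of $\beta_{(\cdot,\cdot)}$ on a neighborhood of $\bigcup_n A_n$ --- which is nowhere established --- you cannot conclude $\beta_n\to0$, and with it you lose the bound $\|\mathcal{H}((\phi,\xi),-\beta_n)\|\le(1+o(1))\|(\phi,\xi)\|$ on which the passage from $\|\mathcal{I}'(w_n,z_n)\|_*\to0$ to $\|\mathcal{J}'(u_n,v_n)\|_*\to0$ rests.

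The fix is the paper's quantitative argument, and it shows that the full strength $\beta_n\to0$ is not needed: an upper bound on $e^{-\beta_n}$ suffices. Since $\|(-\Delta)^{1/4}\mathcal{H}(u,\beta)\|_2^2=e^{\beta}\|(-\Delta)^{1/4}u\|_2^2$, one has
\[
e^{-\beta_n}=\frac{\|(-\Delta)^{1/4}w_n\|_2^2+\|(-\Delta)^{1/4}z_n\|_2^2}{\|(-\Delta)^{1/4}u_n\|_2^2+\|(-\Delta)^{1/4}v_n\|_2^2}.
\]
The denominator is bounded away from zero by Lemma \ref{ous1}, because $(u_n,v_n)\in\mathcal{P}(a,b)$. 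The numerator is bounded above: since the pushed sets $Q_n=\eta(A_n)\subset\mathcal{P}(a,b)$ satisfy $\max_{Q_n}\mathcal{J}=\max_{Q_n}\mathcal{I}\to m_{\mathcal{F}}$, condition $(F_3)$ (coercivity of $\mathcal{J}$ on $\mathcal{P}(a,b)$) gives a uniform bound of $Q_n$ in $\mathcal{X}$, and $\mathrm{dist}((w_n,z_n),Q_n)\to0$ then bounds $\|(w_n,z_n)\|$. Hence $e^{-\beta_n}\le C$, which yields $\|\mathcal{H}((\phi,\xi),-\beta_n)\|\le\max\{1,\sqrt{C}\}\,\|(\phi,\xi)\|$, and the remainder of your argument goes through unchanged.
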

\begin{proof}
Let $\{A_n\}\subset \mathcal{F}$ be a minimizing sequence of $m_{\mathcal{F}}$. We define the mapping $\eta : [0,1]\times \mathcal{S}\to \mathcal{S}$, that is $\eta(t,(u,v))=\mathcal{H}((u,v), t\beta_{(u,v)})$. By Lemmas \ref{m} and $\ref{pa}$, $\eta(t, (u,v))$ is continuous in $[0,1]\times \mathcal{S}$ and satisfies $\eta(t,(u,v))=(u,v)$ for all $(t,(u,v))\in \{0\}\times \mathcal{S}$. Thus by the definition of $\mathcal{F}$ (see \cite[Definition 3.1]{ghou}), one has
\begin{align*}
Q_n:=\eta(1, A_n)=\{\mathcal{H}((u,v),\beta_{(u,v)}) : (u,v)\in A_n\}\subset \mathcal{F}.
\end{align*}
Obviously, $Q_n\subset \mathcal{P}(a,b)$ for any $n\in\mathbb{N}^+$. Since $\mathcal{I}(\mathcal{H}((u,v),\beta))=\mathcal{I}(u,v)$ for any $(u,v)\in \mathcal{S}$ and $\beta\in\mathbb{R}$, then
\begin{align*}
\max_{(u,v)\in Q_n}\mathcal{I}(u,v)=\max_{(u,v)\in A_n}\mathcal{I}(u,v)\to m_{\mathcal{F}},\quad \mbox{as $n\rightarrow\infty$},
\end{align*}
which implies that $\{Q_n\}\subset \mathcal{F}$ is another minimizing sequence of $m_{\mathcal{F}}$. Since $G_1(u):=\|u\|_2^2-a^2$, $G_2(v):=\|v\|_2^2-b^2$ are of class $C^1$, and for any $(u,v)\in \mathcal{S}$, we have $\langle G'_1(u),u\rangle=2a^2>0$, $\langle G'_2(v),v\rangle=2b^2>0$. Therefore, by the implicit function theorem, $\mathcal{S}$ is a $C^1$-Finsler manifold.  By \cite[Theorem 3.2]{ghou}, we obtain a $(PS)_{m_{\mathcal{F}}}$ sequence $\{(\hat{u}_n,\hat{v}_n)\}\subset \mathcal{S}$ for $\mathcal{I}$ such that $\lim\limits_{n\to+\infty}dist((\hat{u}_n,\hat{v}_n), Q_n)=0$.
Let
\begin{align*}
(u_n,v_n):=\mathcal{H}((\hat{u}_n,\hat{v}_n),\beta_{(\hat{u}_n,\hat{v}_n)}),
\end{align*}
we prove that $\{(u_n,v_n)\}\subset \mathcal{P}(a,b)$ is the desired sequence.
We claim that there exists $C>0$ such that $e^{-\beta_{(u_n,v_n)}}\leq C$ for any $n\in\mathbb{N}^+$. Indeed, we have
\begin{align*}
e^{-\beta_{(u_n,v_n)}}=\frac{\|(-\Delta)^{1/4}\hat{u}_n\|_2^2+\|(-\Delta)^{1/4}\hat{v}_n\|_2^2}{\|(-\Delta)^{1/4}u_n\|_2^2+\|(-\Delta)^{1/4}v_n\|_2^2}.
\end{align*}
Since $\{(u_n,v_n)\}\subset \mathcal{P}(a)$, by Lemma $\ref{ous1}$, we know that there exists a constant $C>0$ such that $\|(-\Delta)^{1/4}u_n\|_2^2+\|(-\Delta)^{1/4}v_n\|_2^2\geq C$ for any $n\in\mathbb{N}^+$. Since $Q_n\subset\mathcal{P}(a,b)$ for any $n\in\mathbb{N}^+$ and for any $(u,v)\in\mathcal{P}(a,b)$, one has $\mathcal{J}(u,v)=\mathcal{I}(u,v)$, then
\begin{align*}
\max_{(u,v)\in Q_n}\mathcal{J}(u,v)=\max_{(u,v)\in Q_n}\mathcal{I}(u,v)\to m_{\mathcal{F}}, \quad \mbox{as} \ n\to+\infty.
\end{align*}
This fact together with $Q_n\subset \mathcal{P}(a,b)$ and $(F_3)$ yields that $\{Q_n\}$ is uniformly bounded in $\mathcal{X}$,
thus from $\lim\limits_{n\to \infty}dist((\hat{u}_n,\hat{v}_n),Q_n)=0$, we obtain $\sup\limits_{n\geq 1}\|(\hat{u}_n,\hat{v}_n)\|^2<+\infty$. This prove the claim.

Since $\{(u_n,v_n)\}\subset \mathcal{P}(a,b)$, one has $\mathcal{J}(u_n,v_n)=\mathcal{I}(u_n,v_n)=\mathcal{I}(\hat{u}_n,\hat{v}_n)\to m_{\mathcal{F}}$ as $n\to\infty$. For any $(\varphi,\psi)\in T_{(u_n,v_n)}$, we have
\begin{align*}
  &\int_{\mathbb{R}}\big(\hat{u}_n e^{-\frac{\beta_{(\hat{u}_n,\hat{v}_n)}}{2}}\varphi(e^{-\beta_{(\hat{u}_n,\hat{v}_n)}}x) +\hat{v}_n e^{-\frac{\beta_{(\hat{u}_n,\hat{v}_n)}}{2}}\psi(e^{-\beta_{(\hat{u}_n,\hat{v}_n)}}x)\big) dx\\
=&\int_{\mathbb{R}}\big(\hat{u}_n(e^{\beta_{(\hat{u}_n,\hat{v}_n)}}y) e^{\frac{\beta_{(\hat{u}_n,\hat{v}_n)}}{2}}\varphi(y) +\hat{v}_n (e^{\beta_{(\hat{u}_n,\hat{v}_n)}}y) e^{\frac{\beta_{(\hat{u}_n,\hat{v}_n)}}{2}}\psi(y)\big)dy=\int_{\mathbb{R}}(u_n\varphi+v_n\psi)dx=0,
\end{align*}
which implies that $\mathcal{H}((\varphi,\psi),-\beta_{(\hat{u}_n,\hat{v}_n)})\in T_{(u_n,v_n)}\mathcal{S}$. Also,
\begin{align*}
 &\big\|(e^{-\frac{\beta_{(\hat{u}_n,\hat{v}_n)}}{2}}\varphi(e^{-\beta_{(\hat{u}_n,\hat{v}_n)}}x) ,e^{-\frac{\beta_{(\hat{u}_n,\hat{v}_n)}}{2}}\psi(e^{-\beta_{(\hat{u}_n,\hat{v}_n)}}x) )\big\|^2\\
=&e^{-\beta_{(\hat{u}_n,\hat{v}_n)}}\big(\|(-\Delta)^{1/4}\varphi\|_2^2+\|(-\Delta)^{1/4}\psi\|_2^2\big)+\|\varphi\|_2^2+\|\psi\|_2^2\\
\leq &C\big(\|(-\Delta)^{1/4}\varphi\|_2^2+\|(-\Delta)^{1/4}\psi\|_2^2\big)+\|\varphi\|_2^2+\|\psi\|_2^2\\
\leq &\max\{1,C\}\|\varphi,\psi\|^2.
\end{align*}
By Lemma $\ref{c1}$, for any $(\varphi,\psi)\in T_{(u_n,v_n)}$, we deduce that
\begin{align*}
\big|\langle \mathcal{J}^{\prime}(u_n,v_n),(\varphi,\psi)\rangle\big|=&\Big|\big\langle \mathcal{J}^{\prime}\big(\mathcal{H}((\hat{u}_n,\hat{v}_n),\beta_{(\hat{u}_n,\hat{v}_n)})\big), \mathcal{H}\big(\mathcal{H}((\varphi,\psi),-\beta_{(\hat{u}_n,\hat{v}_n)}),\beta_{(\hat{u}_n,\hat{v}_n)}\big)\big\rangle\Big|\\
=&\Big|\big\langle \mathcal{I}^{\prime}(\hat{u}_n,\hat{v}_n),\mathcal{H}((\varphi,\psi),-\beta_{(\hat{u}_n,\hat{v}_n)})\Big\rangle\Big|\\
\leq&\|\mathcal{I}^{\prime}(\hat{u}_n,\hat{v}_n)\|_*\cdot\|\mathcal{H}((\varphi,\psi),-\beta_{(\hat{u}_n,\hat{v}_n)})\|\\
\leq &\max\big\{1, \sqrt{C}\big\}\|\mathcal{I}^{\prime}(\hat{u}_n,\hat{v}_n)\|_*\cdot\|\varphi,\psi\|,
\end{align*}
where $(\mathcal{X}^*,\|\cdot\|_*)$ is the dual space of $(\mathcal{X},\|\cdot\|)$. Hence we can deduce that
\begin{align*}
\|\mathcal{J}^{\prime}(u_n,v_n)\|_* \leq\max\big\{1, \sqrt{C}\big\}\|\mathcal{I}^{\prime}(\hat{u}_n,\hat{v}_n)\|_*\to0, \quad \mbox{as}\ n\to\infty,
\end{align*}
 which implies that  $\{(u_n,v_n)\}$ is a $(PS)_{m_{\mathcal{F}}}$ sequence for $\mathcal{J}|_{\mathcal{S}}$. This ends the proof.
\end{proof}

\noindent{\bfseries Proof of Proposition \ref{pro}.}
Note that the class $\mathcal{F}$ of all singletons included in $\mathcal{S}$ is a homotopy stable family of compact subsets of $\mathcal{S}$ without boundary.
By Lemma $\ref{Minimax}$, we know that if $m_{\mathcal{F}}>0$, then there exists a $(PS)_{m_{\mathcal{F}}}$ sequence $\{(u_n,v_n)\}\subset \mathcal{P}(a,b)$ for $\mathcal{J}|_{\mathcal{S}}$.
By Lemma \ref{ous1}, we know $m(a,b)>0$, so if we can prove that $m_{\mathcal{F}}=m(a,b)$, then we complete the proof.

In fact,
by the definition of $\mathcal{F}$, we have
\begin{align*}
m_{\mathcal{F}}=\inf_{A\in\mathcal{F}}\max_{(u,v)\in A}\mathcal{I}(u,v)=\inf_{(u,v)\in \mathcal{S}}\mathcal{I}(u,v)=\inf_{(u,v)\in \mathcal{S}}\mathcal{I}(\mathcal{H}((u,v),\beta_{(u,v)}))=\inf_{(u,v)\in \mathcal{S}}\mathcal{J}(\mathcal{H}((u,v),\beta_{(u,v)})).
\end{align*}
For any $(u,v)\in \mathcal{S}$, it follows from $\mathcal{H}((u,v),\beta_{(u,v)})\in \mathcal{P}(a,b)$ that $\mathcal{J}(\mathcal{H}((u,v),\beta_{(u,v)}))\geq m(a,b)$, so $m_{\mathcal{F}}\geq m(a,b)$.
On the other hand, for any $(u,v)\in \mathcal{P}(a,b)$, by Lemma $\ref{pa}$, we deduce that $\beta_{(u,v)}=0$ and $\mathcal{J}(u,v)=\mathcal{J}(\mathcal{H}((u,v),0))\geq \inf\limits_{(u,v)\in \mathcal{S}}\mathcal{J}(\mathcal{H}((u,v),\beta_{(u,v)}))$, which implies that $m(a,b)\geq m_{\mathcal{F}}$.
  \qed

For the sequence $\{(u_n,v_n)\}$ obtained in Proposition $\ref{pro}$, by $(F_3)$, we know that $\{(u_n,v_n)\}$ is bounded in $\mathcal{X}$. Up to a subsequence, we assume that $(u_n,v_n)\rightharpoonup (u,v)$ in $\mathcal{X}$. Furthermore, by $\mathcal{J}\big|^{\prime}_{\mathcal{S}}(u_n,v_n)\to0$ as $n\to+\infty$ and the Lagrange multiplier rule, there exist two sequences $\{\lambda_{1,n}\},\{\lambda_{2,n}\}\subset \mathbb{R}$ such that
\begin{align}\label{key}
&\frac{1}{2 \pi}\int_{\mathbb{R}}\int_{\mathbb{R}}\frac{[u_n(x)-u_n(y)][\varphi(x)-\varphi(y)]}{|x-y|^{2}}\mathrm{d}x\mathrm{d}y+\frac{1}{2 \pi}\int_{\mathbb{R}}\int_{\mathbb{R}}\frac{[v_n(x)-v_n(y)][\psi(x)-\psi(y)]}{|x-y|^{2}}\mathrm{d}x\mathrm{d}y \nonumber\\
&-\int_{\mathbb{R}}(I_\mu*F(u_n,v_n))F_{u_n}(u_n,v_n)\varphi \mathrm{d}x-\int_{\mathbb{R}}(I_\mu*F(u_n,v_n))F_{v_n}(u_n,v_n)\phi \mathrm{d}x \nonumber\\
=& \int_{\mathbb{R}}(\lambda_{1,n}u_n\varphi+\lambda_{2,n}v_n\psi)dx+o_n(1)\|(\varphi,\psi)\|
\end{align}
for any $(\varphi,\psi)\in \mathcal{X}$.

\begin{lemma}\label{bdd}
Assume that $(F_1)$-$(F_4)$ and $(F_6)$ hold, then $\{\lambda_{1,n}\}$ and $\{\lambda_{2,n}\}$ are bounded in $\mathbb{R}$.
\end{lemma}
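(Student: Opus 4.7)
The plan is to test the approximate Euler--Lagrange identity \eqref{key} against the two admissible pairs $(\varphi,\psi)=(u_n,0)$ and $(\varphi,\psi)=(0,v_n)$, which isolates $\lambda_{1,n}$ and $\lambda_{2,n}$ via $\|u_n\|_2^2=a^2$ and $\|v_n\|_2^2=b^2$. Since $a,b>0$ are fixed, boundedness of each multiplier will follow as soon as every remaining term in \eqref{key} is shown to be uniformly bounded in $n$. The sequence $\{(u_n,v_n)\}$ is already known to be bounded in $\mathcal{X}$, so $\|(-\Delta)^{1/4}u_n\|_2^2$ and $\|(-\Delta)^{1/4}v_n\|_2^2$ are bounded, and the error $o_n(1)\|u_n\|_{1/2}$ collapses to $o_n(1)$.

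The real work is to control the nonlocal integrals $\int(I_\mu*F(u_n,v_n))F_u(u_n,v_n)u_n\,\mathrm{d}x$ and $\int(I_\mu*F(u_n,v_n))F_v(u_n,v_n)v_n\,\mathrm{d}x$. For this I would combine the Pohožaev identity $P(u_n,v_n)=0$ with $(F_3)$ and $(F_4)$ in tandem. From $(F_3)$ and $\theta>3-\mu$, the auxiliary function $\widetilde F(z)=z\cdot\nabla F(z)-(2-\mu)F(z)$ satisfies the pointwise bound $\widetilde F(z)\geq(\theta+\mu-2)F(z)>0$, so $P(u_n,v_n)=0$ gives
\begin{equation*}
(\theta+\mu-2)\int_{\mathbb{R}}(I_\mu*F(u_n,v_n))F(u_n,v_n)\,\mathrm{d}x\leq\|(-\Delta)^{1/4}u_n\|_2^2+\|(-\Delta)^{1/4}v_n\|_2^2\leq C.
\end{equation*}
Hypothesis $(F_4)$, extended to the coordinate axes by continuity of $F_u,F_v$, then yields $0\leq F_u(z)z_1\leq(2-\mu)F(z)$ and $0\leq F_v(z)z_2\leq(2-\mu)F(z)$ everywhere. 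Since $I_\mu>0$ and $F\geq 0$, this upgrades to
\begin{equation*}
0\leq\int_{\mathbb{R}}(I_\mu*F(u_n,v_n))F_u(u_n,v_n)u_n\,\mathrm{d}x\leq(2-\mu)\int_{\mathbb{R}}(I_\mu*F(u_n,v_n))F(u_n,v_n)\,\mathrm{d}x\leq C',
\end{equation*}
and the analogous bound for the $F_v v_n$ integral.

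With these uniform bounds in hand, substituting $(\varphi,\psi)=(u_n,0)$ into \eqref{key} gives
\begin{equation*}
\lambda_{1,n}a^2=\|(-\Delta)^{1/4}u_n\|_2^2-\int_{\mathbb{R}}(I_\mu*F(u_n,v_n))F_u(u_n,v_n)u_n\,\mathrm{d}x+o_n(1),
\end{equation*}
so $|\lambda_{1,n}|\leq C/a^2$; the same argument with $(\varphi,\psi)=(0,v_n)$ yields $|\lambda_{2,n}|\leq C/b^2$. The only genuinely delicate step is the two--sided comparison between the nonlocal integrals: $(F_3)$ provides the lower bound that converts $P(u_n,v_n)=0$ into an $L^1$--type control of $(I_\mu*F(u_n,v_n))F(u_n,v_n)$, while $(F_4)$ provides the matching upper bound that transfers this control to the Lagrange--multiplier integrands, bypassing any direct use of Trudinger--Moser estimates on the individual exponential nonlinearities.
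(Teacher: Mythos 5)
Your proposal is correct and follows essentially the same route as the paper: you test \eqref{key} with $(u_n,0)$ and $(0,v_n)$, and then bound the resulting nonlocal integrals by combining $P(u_n,v_n)=0$ with $(F_3)$ to control $\int_{\mathbb{R}}(I_\mu*F(u_n,v_n))F(u_n,v_n)\,\mathrm{d}x$ and $(F_4)$ to dominate $F_u(u_n,v_n)u_n$ and $F_v(u_n,v_n)v_n$ by $(2-\mu)F(u_n,v_n)$, which is exactly the argument the paper invokes (only more tersely). Your explicit extension of $(F_4)$ to the coordinate axes by continuity is a legitimate filling-in of a detail the paper leaves implicit.
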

\begin{proof}
Using $(u_n, 0)$ and $(0, v_n)$ as test functions in \eqref{key}, we have
\begin{equation}\label{bdd1}
  \lambda_{1,n}a^2=\|(-\Delta)^{1/4}u_n\|_2^2-\int_{\mathbb{R}}(I_\mu*F(u_n,v_n)) F_{u_n}(u_n,v_n)u_n dx+o_n(1)
\end{equation}
and
\begin{equation}\label{bdd2}
  \lambda_{2,n}b^2=\|(-\Delta)^{1/4}v_n\|_2^2-\int_{\mathbb{R}}(I_\mu*F(u_n,v_n)) F_{v_n}(u_n,v_n)v_n dx+o_n(1).
\end{equation}
By $(F_3)$-$(F_4)$, $P(u_n,v_n)=0$,
and the boundedness of $\{(u_n,v_n)\}$, we get $\{\lambda_{1,n}\}$ and $\{\lambda_{2,n}\}$ are bounded in $\mathbb{R}$. Up to a subsequence, we assume that $\lambda_{1,n}\rightarrow\lambda_1$ and $\lambda_{2,n}\rightarrow\lambda_2$ in $\mathbb{R}$ as $n\rightarrow\infty$.
\end{proof}

\begin{lemma}\label{non}
Assume that $(F_1)$-$(F_7)$ hold, then up to a subsequence and up to translations in $\mathbb{R}$, $u_a\neq0$ and $v_b\neq0$.
\end{lemma}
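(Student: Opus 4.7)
The plan is a concentration-compactness argument applied to the bounded $(PS)_{m(a,b)}$ sequence $\{(u_n,v_n)\}\subset\mathcal{P}(a,b)$ obtained in Proposition \ref{pro}, closed off by an identification argument relying on $(F_5)$.

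I first rule out total vanishing. Assume for contradiction that, for every $R>0$,
\[
\lim_{n\to\infty}\sup_{y\in\mathbb{R}}\int_{B_R(y)}\bigl(|u_n|^2+|v_n|^2\bigr)\mathrm{d}x=0.
\]
Lions' lemma in $H^{1/2}(\mathbb{R})$ then forces $u_n,v_n\to 0$ in $L^p(\mathbb{R})$ for every $p>2$. To transfer this into the nonlocal term, I would combine $(F_1)$--$(F_2)$, the splitting device of Lemma \ref{alge} (applied to $\tfrac{2}{2-\mu}\|(-\Delta)^{1/4}u_n\|_2^2$ and $\tfrac{2}{2-\mu}\|(-\Delta)^{1/4}v_n\|_2^2$), the Adachi--Tanaka inequality (Lemma \ref{tm}), a H\"older argument and Proposition \ref{hardy}---exactly as in estimate (4.2) of Lemma \ref{mountain}---to conclude
\[
\int_{\mathbb{R}}\bigl(I_\mu\ast F(u_n,v_n)\bigr)F(u_n,v_n)\,\mathrm{d}x\longrightarrow 0.
\]
The bound $m(a,b)<\frac{2-\mu}{4}$ from Lemma \ref{contr}, coupled with $(F_3)$--$(F_4)$ and the identities $\mathcal{J}(u_n,v_n)\to m(a,b)$, $P(u_n,v_n)=0$, is used here to keep the relevant exponents below the critical threshold $\pi$. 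Now $(F_4)$ gives $\widetilde F<(2-\mu)F$, so $P(u_n,v_n)=0$ yields $\|(-\Delta)^{1/4}u_n\|_2^2+\|(-\Delta)^{1/4}v_n\|_2^2\to 0$, contradicting Lemma \ref{ous1}. Hence there exist $R,\eta>0$ and $\{y_n\}\subset\mathbb{R}$ with $\int_{B_R(y_n)}(|u_n|^2+|v_n|^2)\,\mathrm{d}x\ge\eta$.

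Setting $\tilde u_n:=u_n(\cdot+y_n)$ and $\tilde v_n:=v_n(\cdot+y_n)$, translation invariance of $\mathcal{J}$, $\mathcal{S}$ and $\mathcal{P}(a,b)$ keeps $\{(\tilde u_n,\tilde v_n)\}$ a bounded $(PS)_{m(a,b)}$ sequence, and up to subsequence $(\tilde u_n,\tilde v_n)\rightharpoonup (u_a,v_b)$ in $\mathcal{X}$ with $(u_a,v_b)\neq(0,0)$. It remains to exclude the case where exactly one of $u_a,v_b$ vanishes. Suppose, toward contradiction, that $u_a\equiv 0$; then $v_b\not\equiv 0$. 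Testing the Lagrange multiplier identity \eqref{key} for $\{(\tilde u_n,\tilde v_n)\}$ with $(\phi,0)$, $\phi\in C_0^\infty(\mathbb{R})$, and passing to the limit---using weak convergence for the kinetic term, Lemma \ref{bdd} for the multiplier, and Lemma \ref{f} for the nonlocal term (applicable because $A_n=\int(I_\mu\ast F)\widetilde F$ is bounded and $(F_4)$ bounds $z\cdot\nabla F$ by $2(2-\mu)F$)---yields
\[
\int_{\mathbb{R}}\bigl(I_\mu\ast F(0,v_b)\bigr)F_{z_1}(0,v_b)\,\phi\,\mathrm{d}x=0,\qquad\forall\phi\in C_0^\infty(\mathbb{R}).
\]
Hence $(I_\mu\ast F(0,v_b))F_{z_1}(0,v_b)=0$ a.e. But $(F_3)$ forces $F(0,v_b(x))>0$ wherever $v_b(x)\neq 0$, so $F(0,v_b)\not\equiv 0$ and $I_\mu\ast F(0,v_b)$ is strictly positive on all of $\mathbb{R}$; moreover $(F_5)$ gives $F_{z_1}(0,v_b(x))\neq 0$ on $\{v_b\neq 0\}$, a set of positive measure. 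This contradiction shows $u_a\not\equiv 0$, and the completely symmetric argument using the second half of $(F_5)$ gives $v_b\not\equiv 0$.

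The main difficulty lies in the first step: converting $L^p$-vanishing of $\{(u_n,v_n)\}$ into vanishing of the nonlocal Choquard energy in the presence of exponential critical growth. The sharp energy bound $m(a,b)<\frac{2-\mu}{4}$ from Lemma \ref{contr} is precisely what is required to keep the arguments of the Adachi--Tanaka inequality below the critical exponent $\pi$ after decoupling $u_n$ from $v_n$ via Lemma \ref{alge}; absent this bound the uniform control of the exponential integrals would break down.
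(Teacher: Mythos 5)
Your overall skeleton (Lions' vanishing alternative, translation to get a nontrivial weak limit, passing to the limit in \eqref{key}, and using the positivity of $I_\mu\ast F$ from $(F_3)$ together with $(F_5)$ to exclude semitrivial limits) is exactly the paper's, and your concluding step is fine. The genuine gap is in the vanishing-exclusion step, where your argument is circular. To run the Adachi--Tanaka estimate as in \eqref{tain1} you need the uniform bound $\frac{2}{2-\mu}\big(\|(-\Delta)^{1/4}u_n\|_2^2+\|(-\Delta)^{1/4}v_n\|_2^2\big)<1$ for large $n$, but at that stage this bound is not available. From $P(u_n,v_n)=0$, $\mathcal{J}(u_n,v_n)\to m(a,b)$ and $(F_3)$ alone one only gets
\begin{equation*}
\|(-\Delta)^{1/4}u_n\|_2^2+\|(-\Delta)^{1/4}v_n\|_2^2\;\le\;\frac{2(\theta+\mu-2)}{\theta+\mu-3}\,m(a,b)+o_n(1),
\end{equation*}
and since $\theta$ may be arbitrarily close to $3-\mu$ the factor $\frac{\theta+\mu-2}{\theta+\mu-3}$ is uncontrolled, so $m(a,b)<\frac{2-\mu}{4}$ does not force the kinetic energy below $\frac{2-\mu}{2}$; the inequality with $(F_4)$ goes the wrong way (it gives a lower bound, indeed $\int(I_\mu\ast F)F\ge\frac{2m(a,b)}{1-\mu}+o_n(1)$ along the sequence). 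The clean bound $\limsup_n(\|(-\Delta)^{1/4}u_n\|_2^2+\|(-\Delta)^{1/4}v_n\|_2^2)\le 2m(a,b)<\frac{2-\mu}{2}$ follows from $\mathcal{J}(u_n,v_n)=\frac12(\text{kinetic})-\frac12\int(I_\mu\ast F)F$ only \emph{after} one knows $\int_{\mathbb{R}}(I_\mu\ast F(u_n,v_n))F(u_n,v_n)\,\mathrm{d}x\to0$ --- which is precisely what you propose to prove by Moser--Trudinger. So your first step assumes what it is meant to establish.

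The paper breaks this circle by a different ordering: first it derives the uniform bound \eqref{1n} on $\int(I_\mu\ast F)[(u_n,v_n)\cdot\nabla F(u_n,v_n)]$ from $\mathcal{J}-\frac12P$ and $(F_3)$, and then invokes the compactness Lemma \ref{f} with weak limit $(0,0)$ (which holds under the vanishing hypothesis) to conclude $\int(I_\mu\ast F(u_n,v_n))F(u_n,v_n)\to0$ \emph{without any Trudinger--Moser input}; only then does Lemma \ref{contr} yield the kinetic bound $2m(a,b)<\frac{2-\mu}{2}$, after which the Adachi--Tanaka estimate (plus $L^p$-vanishing) kills $\int(I_\mu\ast F)[(u_n,v_n)\cdot\nabla F]$ and $P(u_n,v_n)=0$ gives the contradiction. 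If you insert Lemma \ref{f} at this point, your argument closes; in fact your observation that $(F_4)$ gives $\widetilde F\le(2-\mu)F$ (by continuity also on the axes) then lets you skip the paper's second Moser--Trudinger computation, since $P(u_n,v_n)=0$ immediately forces the kinetic term to vanish once $\int(I_\mu\ast F)F\to0$. Your limit-identification step via $(F_5)$ is correct and is essentially the paper's, just written out in more detail.
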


\begin{proof}
We claim that
\begin{align*}
\Lambda:=\limsup_{n\to+\infty}\Big(\sup_{y\in\mathbb{R}}\int_{B_r(y)}(|u_n|^2+|v_n|^2)\mathrm{d}x\Big)>0.
\end{align*}
Otherwise, we obtain $u_n,v_n\to0$ in $L^p(\mathbb{R})$ for any $p>2$ by the Lions' vanishing lemma \cite[Lemma 1.21]{willem}.
From $\mathcal{J}(u_n,v_n)=m(a,b)+o_n(1)$, $P(u_n,v_n)=0$ and $(F_3)$, we have
\begin{align*}
\mathcal{J}(u_n,v_n)-\frac{1}{2}P(u_n,v_n)\geq \frac{\theta+\mu-3}{2 \theta}\int_{\mathbb{R}}(I_\mu*F(u_n,v_n))\big[(u_n,v_n)\cdot(\nabla F(u_n,v_n))\big]\mathrm{d}x+o_n(1).
\end{align*}
By $\theta>3-\mu$, up to a subsequence, we get
\begin{align}\label{1n}
\int_{\mathbb{R}}(I_\mu*F(u_n,v_n))\big[(u_n,v_n)\cdot(\nabla F(u_n,v_n))\big]\mathrm{d}x\leq \frac{2\theta m(a,b)}{\theta+\mu-3}=:K_0.
\end{align}
From Lemma \ref{f}, we can see
\begin{align*}
\int_{\mathbb{R}}(I_\mu*F(u_n,v_n))F(u_n,v_n)\mathrm{d}x=o_n(1).
\end{align*}
Thus, by Lemma \ref{contr}, we have
\begin{align*}
\limsup_{n\to\infty}\big(\|(-\Delta)^{1/4}u_n\|_2^2+\|(-\Delta)^{1/4}v_n\|_2^2\big)\leq 2m(a,b)<\frac{2-\mu}{2}.
\end{align*}
Up to a subsequence, we assume that $\sup \limits_{n\in\mathbb{N}^+}\frac{2}{2-\mu}\big(\|(-\Delta)^{1/4}u_n\|_2^2+\|(-\Delta)^{1/4}v_n\|_2^2\big)<1$.
From $(\ref{tain1})$,  for $\nu'=\frac{\nu}{\nu-1}$, we have
\begin{align*}
\|F(u_n,v_n)\|_{\frac{2}{2-\mu}}^2\leq
C\Big(\|u_n\|_{\frac{2(\kappa+1)}{2-\mu}}^{\frac{2(\kappa+1)}{2-\mu}}+\|v_n\|_{\frac{2(\kappa+1)}{2-\mu}}^{\frac{2(\kappa+1)}{2-\mu}}\Big)^{2-\mu}+
C\Big(\|u_n\|_{\frac{2q\nu'}{2-\mu}}^{\frac{2q\nu'}{2-\mu}}+\|v_n\|_{\frac{2q\nu'}{2-\mu}}^{\frac{2q\nu'}{2-\mu}}\Big)^{\frac{2-\mu}{\nu'}}\to0,\  \mbox{as}\ n\to+\infty.
\end{align*}
By a similar argument as above, we infer that $ \|(u_n,v_n)\cdot \nabla F(u_n,v_n)\|_{\frac{2}{2-\mu}}\to0$ as $n\to\infty$.
Hence, we obtain
\begin{align*}
\int_{\mathbb{R}}(I_\mu*F(u_n,v_n))\big[(u_n,v_n)\cdot \nabla F(u_n,v_n)\big]\mathrm{d}x=o_n(1).
\end{align*}
Since $P(u_n,v_n)=0$, we have $\| (-\Delta)^{1/4}u_n\|_2^2+\| (-\Delta)^{1/4}v_n\|_2^2=o_n(1)$, then $m(a,b)=0$, which is a contradiction. According to $\Lambda>0$, there exists $\{y_n\}\subset\mathbb{R}$ such that $\int_{B_1(y_n)}(|u_n|^2+|y_n|^2)\mathrm{d}x>\frac{\Lambda}{2}$, i.e., $\int_{B_1(0)}(|u_n(x-y_n)|^2+|v_n(x-y_n)|^2)\mathrm{d}x>\frac{\Lambda}{2}$. Then up to a subsequence and up to translations in $\mathbb{R}$, $(u_n,v_n)\rightharpoonup (u_a,v_b)\neq(0,0)$ in $\mathcal{X}$.
By \eqref{key}, \eqref{1n}, Lemmas \ref{f} and \ref{bdd}, we can see that $(u_a,v_b)$ is a weak solution of \eqref{problem}. Assume that $u_a=0$, then by $(F_3)$ and $(F_5)$, we know $v_b=0$. Similarly, $v_b=0$ implies that $u_a=0$. This ends the proof.
\end{proof}

\begin{lemma}\label{ne}
Assume that $(F_1)$-$(F_7)$ hold, then $\lambda_1,\lambda_2<0$.
\end{lemma}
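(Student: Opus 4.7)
My plan is to eliminate the $u_n$-kinetic term from the Lagrange-multiplier formula \eqref{bdd1} using the Pohozaev constraint $P(u_n,v_n)=0$ satisfied by the $(PS)$ sequence from Proposition \ref{pro}, producing an identity whose right-hand side has a sign controlled by $(F_4)$ and by the $v_n$-kinetic term. Solving $P(u_n,v_n)=0$ for $\|(-\Delta)^{1/4}u_n\|_2^2$, substituting into \eqref{bdd1}, and using the algebraic identity $\widetilde F(z)-z_1F_{z_1}(z)=z_2F_{z_2}(z)-(2-\mu)F(z)$, I would obtain
\[
\lambda_{1,n}a^2 = \int_{\mathbb{R}}(I_\mu*F(u_n,v_n))\bigl[v_nF_{v_n}(u_n,v_n)-(2-\mu)F(u_n,v_n)\bigr]dx - \|(-\Delta)^{1/4}v_n\|_2^2 + o_n(1).
\]

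The next step is to show that the bracketed integrand is pointwise non-positive. On the open set $\{u_nv_n\ne 0\}$ this is exactly the strict inequality in $(F_4)$; on $\{v_n=0\}$ the bracket reduces to $-(2-\mu)F(u_n,0)\le 0$; and on $\{u_n=0,\,v_n\ne 0\}$ the non-strict bound $v_nF_{v_n}(0,v_n)\le (2-\mu)F(0,v_n)$ is obtained from $(F_4)$ by passing to the limit as the first argument tends to zero, using the continuity of $F_v$ and $F$ guaranteed by $(F_1)$. Since $(I_\mu*F)\ge 0$, the first term on the right-hand side of the identity above is $\le 0$, so
\[
\lambda_{1,n}a^2 \le -\|(-\Delta)^{1/4}v_n\|_2^2 + o_n(1).
\]
Passing to the limit along the subsequence on which $\lambda_{1,n}\to\lambda_1$ (Lemma \ref{bdd}), the weak lower semicontinuity of $\|(-\Delta)^{1/4}\cdot\|_2$ together with $v_b\ne 0$ from Lemma \ref{non} (which forces $\|(-\Delta)^{1/4}v_b\|_2^2>0$ since $(-\Delta)^{1/4}$ has trivial kernel on $L^2(\mathbb{R})$) yields
\[
\lambda_1 a^2 \le -\liminf_{n\to\infty}\|(-\Delta)^{1/4}v_n\|_2^2 \le -\|(-\Delta)^{1/4}v_b\|_2^2 < 0,
\]
so $\lambda_1<0$. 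The proof of $\lambda_2<0$ is the mirror argument starting from \eqref{bdd2}.

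I expect the main technical difficulty to lie in the sign analysis on the axis $\{u_n=0,\,v_n\ne 0\}$: condition $(F_4)$ is not directly postulated there, and a naive use of $(F_3)$ alone would in fact suggest the reverse (positive) bound $vF_v(0,v)\ge \theta F(0,v)>(2-\mu)F(0,v)$, so the continuity-based extension of $(F_4)$ to the axis is essential. A secondary advantage of carrying out the computation at the $(PS)$-sequence level, rather than at the weak limit, is that no separate Pohozaev identity for $(u_a,v_b)$ needs to be established.
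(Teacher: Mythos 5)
Your proposal is correct and follows essentially the same route as the paper: both combine \eqref{bdd1}--\eqref{bdd2} with $P(u_n,v_n)=0$ to get the identity $-\lambda_{1,n}a^2=\|(-\Delta)^{1/4}v_n\|_2^2+\int_{\mathbb{R}}(I_\mu*F(u_n,v_n))[(2-\mu)F(u_n,v_n)-v_nF_{v_n}(u_n,v_n)]\,\mathrm{d}x+o_n(1)$, use $(F_3)$--$(F_4)$ for the sign of the integrand, and use the nonvanishing of $(u_a,v_b)$ from Lemma \ref{non}. The only (cosmetic) difference is that you discard the nonlocal term after the pointwise sign analysis and draw strict negativity from weak lower semicontinuity of $\|(-\Delta)^{1/4}v_n\|_2^2$ together with $v_b\neq0$, whereas the paper keeps that term and invokes Fatou's lemma; your explicit treatment of the axis sets via the continuity limit of $(F_4)$ is in fact more careful than the paper's terse argument.
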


\begin{proof}
Combining \eqref{bdd1}, \eqref{bdd2} with  $P(u_n,v_n)=0$, we have
\begin{equation*}
  -\lambda_{1,n}a^2=\| (-\Delta)^{1/4}v_n\|_2^2+\int_{\mathbb{R}}(I_\mu*F(u_n,v_n))\big[(2-\mu)F(u_n,v_n)-F_{v_n}(u_n,v_n)v_n\big]\mathrm{d}x+o_n(1)
\end{equation*}
and
\begin{equation*}
  -\lambda_{2,n}b^2=\| (-\Delta)^{1/4}u_n\|_2^2+\int_{\mathbb{R}}(I_\mu*F(u_n,v_n))\big[(2-\mu)F(u_n,v_n)-F_{u_n}(u_n,v_n)u_n\big]\mathrm{d}x+o_n(1).
\end{equation*}
Thanks to $u_a\neq0$ and $v_b\neq0$, by using $(F_3)$-$(F_4)$ and Fatou lemma, we obtain $\liminf \limits_{n\rightarrow\infty}-\lambda_{1,n}>0$ and $\liminf\limits_{n\rightarrow\infty}-\lambda_{2,n}>0$, namely, $\limsup \limits_{n\rightarrow\infty}\lambda_{1,n}<0$ and $\limsup\limits_{n\rightarrow\infty}\lambda_{2,n}<0$. By Lemma \ref{bdd}, $\{\lambda_{1,n}\}$ and $\{\lambda_{2,n}\}$ are bounded in $\mathbb{R}$, up to a subsequence, we can assume that $\lambda_{1,n}\rightarrow\lambda_1<0$ and $\lambda_{2,n}\rightarrow\lambda_2<0$ in $\mathbb{R}$ as $n\rightarrow\infty$.
\end{proof}

\section{{\bfseries Proof of the result}}\label{proof}

\noindent{\bfseries Proof of Theorem \ref{thm1.1}.} Under the assumptions of Theorem \ref{thm1.1}, from \eqref{key}, \eqref{1n}, Lemmas \ref{f}, \ref{bdd}, \ref{ne},
we know $u$ is a weak solution of $(\ref{problem})$ with $\lambda_1,\lambda_2<0$ and $P(u,v)=0$.
Using the Br\'ezis-Lieb lemma\cite[Lemma 1.32]{willem}, we get
\begin{equation*}
  \|u_{n}\|_2^2=\|u_n-u_{a}\|_2^2+\|u_{a}\|_2^2+o_n(1),\quad\|v_{n}\|_2^2=\|v_n-v_{b}\|_2^2+\|v_{b}\|_2^2+o_n(1).
\end{equation*}
Let $a_1:=\|u_{a}\|_2>0$, $b_1:=\|v_{b}\|_2>0$, and $a_{1,n}:=\|u_n-u_{a}\|_2$, $b_{1,n}:=\|v_n-v_{b}\|_2$, then
$a^2=a_1^2+a_{1,n}^2+o_n(1)$ and $b^2=b_1^2+b_{1,n}^2+o_n(1)$.
On the one hand, using $(F_3)$, $P(u,v)=0$ and Fatou lemma, we have
\begin{align*}
\mathcal{J}(u)=&\mathcal{J}(u)-\frac{1}{2}P(u)=\frac{1}{2}\int_{\mathbb{R}}\big[(I_\mu*F(u,v))(u,v)\cdot\nabla F(u,v)-(3-\mu)(I_\mu*F(u,v))F(u,v)\big]\mathrm{d}x\\
\leq& \liminf_{n\to\infty} \frac{1}{2}\int_{\mathbb{R}}\big[(I_\mu*F(u,v))(u,v)\cdot\nabla F(u,v)-(3-\mu)(I_\mu*F(u,v))F(u,v)\big]\mathrm{d}x\\
=&\liminf_{n\to\infty}(\mathcal{J}(u_n,v_n)-\frac{1}{2}P(u_n,v_n))
=m(a,b).
\end{align*}
On the other hand, it follows from Lemma $\ref{6.1}$ that $\mathcal{J}(u,v)\geq m(a_1,b_1)\geq m(a,b).$ Thus $\mathcal{J}(u,v)= m(a_1,b_1)= m(a,b)$.
By Lemma \ref{6.3}, we obtain $a=a_1$ and $b=b_1$.
This implies $(u,v)$ is a ground state solution of problem $(\ref{problem})$-\eqref{problem'}.
\qed

\end{document}